\documentclass[11pt]{article}
\usepackage[a4paper,margin=1in]{geometry}
\usepackage[T1]{fontenc}
\usepackage[utf8]{inputenc}
\usepackage{lmodern} 
\usepackage{microtype}
\usepackage{amsmath,amsthm,amssymb,mathtools, mathrsfs}
\usepackage{bm}
\usepackage{enumitem}
\usepackage{hyperref}
\usepackage{xcolor}
\usepackage{centernot}
\usepackage{amsfonts}
\usepackage{natbib}  
\usepackage{comment}

\usepackage{setspace}

\newif\ifshowboxes
\showboxestrue % set to \showboxesfalse 

\ifshowboxes
  \usepackage{tcolorbox}
  
\else
  
\fi

\newtheorem{theorem}{Theorem}[section]
\newtheorem{proposition}[theorem]{Proposition}
\newtheorem{example}[theorem]{Example}
\newtheorem{corollary}[theorem]{Corollary}
\newtheorem{lemma}[theorem]{Lemma}
\theoremstyle{definition}
\newtheorem{definition}[theorem]{Definition}
\theoremstyle{definition}
\newtheorem{assumption}[theorem]{Assumption}
\theoremstyle{remark}
\newtheorem{remark}[theorem]{Remark}

\newcommand{\Mcal}{\mathcal{M}}
\newcommand{\Pcal}{\mathcal{P}}
\newcommand{\Qcal}{\mathcal{Q}}
\newcommand{\Ecal}{\mathcal{E}}
\newcommand{\E}{\mathbb{E}}
\newcommand{\R}{\mathbb{R}}
\newcommand{\N}{\mathbb{N}}
\newcommand{\1}{\mathbf{1}}
\newcommand{\Mone}{\mathcal{M}_1}
\newcommand{\Mplus}{\mathcal{M}_+}
\newcommand{\Bb}{\mathcal{B}_b}

\newcommand{\Peff}{\Pcal_\mathrm{eff}}
\newcommand{\co}{\operatorname{co}}
\newcommand{\cl}{\operatorname{cl}}
\newcommand{\sol}{\operatorname{sol}}
\newcommand{\ba}{\operatorname{ba}}

\newcommand{\GRO}{\operatorname{GRO}}
\newcommand{\GROW}{\mathsf{G}}
\newcommand{\REGROW}{\operatorname{REGROW}}

\newcommand{\Grow}{\mathsf{G}}
\newcommand{\Ms}{\mathcal{M}}

\newcommand{\sconv}{\sigma(\Ms,\Bb)}

\DeclareMathOperator{\conv}{co}
\DeclareMathOperator{\cconv}{\overline{\conv}}

\renewcommand{\P}{{\mathsf P}}
\renewcommand{\S}{{\mathsf S}}
\newcommand{\Q}{{\mathsf Q}}
\newcommand{\RR}{{\mathsf R}}

\title{Strong duality for the GROW criterion}
\author{
Ashwin Ram\footnote{Carnegie\ Mellon University, \texttt{aram2@andrew.cmu.edu}} 
\and
Martin Larsson\footnote{Department of Mathematical Sciences, Carnegie\ Mellon University, \texttt{larsson@cmu.edu}.} \and
Johannes Ruf\footnote{Department of Mathematics, London School of Economics, \texttt{j.ruf@lse.ac.uk}} \\[2ex]
\and
Aaditya Ramdas\footnote{Departments of Statistics and ML, Carnegie\ Mellon University, \texttt{aramdas@cmu.edu}} 
}
\date{\today}

\begin{document}

\maketitle

\begin{abstract}
This paper presents general strong duality results when testing hypotheses by betting against them. A bet is an e-variable for a composite null hypothesis $\Pcal$: a nonnegative random variable $X$ whose expected value is at most one under every $\P \in \Pcal$.
Following Kelly, Breiman, Cover, Shafer, and \cite{grunwald2024safe}, we study a natural minimax \emph{log-optimality} criterion: given a composite alternative $\Qcal$, we characterize the ``GROW value'' $\sup_{X} \inf_{\Q} \E_{\Q}[\log X]$. 
This paper generalizes the results of \cite{larsson2025numeraire} from (arbitrary $\Pcal$ and) simple $\Qcal$ to arbitrary $\Qcal$.
We prove that there always exists a minimizing information-projection pair between the weak-$*$ closures of the convex hulls of arbitrary $\Pcal$ and $\Qcal$, and show that the GROW value for \emph{bounded} e-variables always equals their relative entropy. We also prove a similarly general strong duality for the REGROW criterion with bounded e-variables and arbitrary bounded offsets. 
 Under various assumptions our results extend to unbounded e-variables, and examples show that without any assumptions such extensions fail. Our results are analogous to those in~\cite{larsson2026complete}, swapping tests for bounded e-variables, minimax risk for the GROW criterion, and total variation for relative entropy. 
\end{abstract}

% \begin{spacing}{0.2}
%     \tableofcontents
% \end{spacing}

\section{Introduction}
Given a measurable space $(\Omega,\mathcal{F})$, we write $\Mcal$ for the set of finite signed (countably additive) measures on $\mathcal{F}$, and $\Mcal_+$ and $\Mcal_1$ for the subsets of nonnegative measures and probability measures.
Let $\Pcal\subseteq\Mone$ be an arbitrary composite null hypothesis, and $\Qcal\subseteq\Mone$ an arbitrary composite alternative, both of which we always assume to be nonempty. 
An e-variable for $\Pcal$ is a $[0,\infty]$-valued random variable $X$ such that $\E_\P[X] \leq 1$ for all $\P \in \Pcal$.
The set of all e-variables for $\Pcal$ is denoted $\Ecal$, and the subset of e-variables bounded from above is denoted $\Ecal_b$. 

The influential work of~\cite{grunwald2024safe} proposed studying the \emph{GROW (Growth Rate that is Optimal in the worst-case) value}, 
\begin{equation}\label{eq:G}
\Grow=\sup_{X\in\mathcal{E}}\inf_{\Q\in\mathcal{Q}} \E_{\Q}[\log X],
\end{equation}
and in particular, establishing duality results that equate $\Grow$ to the infimum relative entropy between (appropriate extensions of) $\Qcal$ and $\Pcal$.
For example, \cite{larsson2025numeraire} point out that for simple $\Qcal=\{\Q\}$, $\Grow$ equals a particular infimum relative entropy between $\Q$ and the \emph{effective null hypothesis} corresponding to $\Pcal$. Similarly, for simple $\Pcal = \{\P\}$ and convex $\Qcal$,
Theorem~\ref{thm:singleton-P} yields, when (for example) $H(\Q \mid \P)<\infty$ for each $\Q \in \Qcal$, that $\Grow$ equals the infimum relative entropy between $\Qcal$ and $\P$.

When $\Pcal$ and $\Qcal$ are both composite, such a ``universal'' (i.e., without any additional restrictions) strong duality result does not hold without additional restrictions. For example, the results of \cite{Huber:Strassen:1973} imply that if $\Pcal$ and $\Qcal$ have a \emph{least favorable distribution} pair (a strong assumption!), then
\[
\Grow = \inf_{\P \in \Pcal} \inf_{\Q \in \Qcal} H(\Q \mid \P),
\]
where $H$ is the Kullback--Leibler divergence, or relative entropy;
see~\cite{saha2025huber} for a self-contained proof. 
\cite{grunwald2024safe} provide a different (also strong) sufficient condition under which the same statement holds. Our paper introduces several other conditions under which such characterizations hold; for example, under suitable compactness or domination assumptions on $\Pcal$ and $\Qcal$, many of which are weaker or incomparable to existing ones.

However, our investigations of the above quantity led us to a related notion that remarkably satisfies a strong duality without any restrictions on $\Pcal$ or $\Qcal$. In particular, we obtain a complete characterization of the \emph{bounded} GROW value, where the supremum is taken over the set $\Ecal_b$ of all \emph{bounded} e-variables,
\begin{equation}\label{eq:Gb}
\Grow_b=\sup_{X\in\mathcal{E}_b}\inf_{\Q\in\mathcal{Q}} \E_{\Q}[\log X].
\end{equation}
There are at least three reasons for considering bounded e-variables. First,~\cite{larsson2026complete} provide a complete characterization of testable hypotheses, that is, of when there exists a nontrivial test between $\Pcal$ and $\Qcal$; they point out this is equivalent to the existence of nontrivial \emph{bounded} e-variables. Second, as we formalize below, when $\Qcal$ is a singleton, we have $\Grow=\Grow_b$, so both~\eqref{eq:G} and~\eqref{eq:Gb} may equally be considered generalizations of the singleton case. Third, $\Grow_b$ permits a strong duality result without any restrictions, while this does not appear to be possible for $\Grow$.

The term GROW was originally motivated by the following betting context. We imagine a forecaster claiming that the data are well described by some $\P \in \Pcal$. A skeptic, believing instead that some $\Q \in \Qcal$ is a better description, would like to bet against the forecaster. The e-variable corresponds to an available bet or wealth multiplier, meaning that the skeptic puts forward (say) one dollar along with a particular e-variable before seeing the data, and the forecaster must pay back the realized value of the e-variable (in dollars) after the data is observed. The forecaster is willing to accept any e-variable as a bet, because under their own belief $\Pcal$, the Skeptic will have nonpositive expected net payoff. So which bet should the Skeptic choose? A long tradition dating back to \cite{kelly1956new,breiman1961optimal,cover1987log,shafer2021testing} suggests that the Skeptic should maximize the expected logarithm of the wealth under the alternative $\Q$. Picking the worst $\Q \in \Qcal$ results in the GROW criterion/value. Since $\E_{\Q}[\log X]$ is also sometimes called the e-power of $X$ under $\Q$ (see, for example,~\cite{ramdas2024hypothesis}), one can also refer to $\Grow$ as the minimax e-power, but we stick to the former terminology for simplicity.

As mentioned above, a universal strong duality result was recently established for singleton $\Qcal=\{\Q\}$ and arbitrary $\Pcal$ by~\cite{larsson2025numeraire}, extending earlier work by~\cite{harremoes2023universal} and~\cite{li1999estimation}.
Defining the \emph{effective null} generated by $\Pcal$ as
\begin{align} \label{eq:260523}
\Peff=\bigl\{\P\in\Mplus:\E_\P[X]\le 1\text{ for all } X\in\mathcal{E}\bigr\},
\end{align}
that paper showed that under no restrictions whatsoever on $\Q$ or $\Pcal$,
\begin{equation}\label{eq:numeraire-duality}
\sup_{X \in \Ecal} \E_\Q[\log X] =  \inf_{\P \in \Peff} H(\Q \mid \P),
\end{equation}
and identified a (sub-probability) measure $\P^* \in \Peff$ called the \emph{reverse information projection} such that the right-hand side above equals $H(\Q \mid \P^*)$.
One can view our work as generalizing these results to the setting of composite $\Pcal$ and $\Qcal$.

\subsection*{Summary of our main results}

To set the stage, we argue in Lemma~\ref{lem:E-to-Eb} that for a singleton $\Q$,
\begin{equation}\label{eq:e-eb-ebb-pointQ}
\sup_{X \in \Ecal} \E_\Q[\log X] =  \sup_{X \in \Ecal_b} \E_\Q[\log X].
\end{equation}
 So the left-hand side in~\eqref{eq:numeraire-duality} could also have used bounded e-variables. Further, by monotone convergence the right-hand side of~\eqref{eq:numeraire-duality}, which depends on $\Ecal$ through the definition of the effective null, could also have used bounded e-variables (see~\eqref{eq:260525}). Unsurprisingly, the story is more complicated for general composite $\Qcal$.

Obviously,  
\(
\Grow \geq \Grow_b,
\)
but the inequality can be strict --- remarkably, even for a point null $\Pcal = \{\P\}$ and composite $\Qcal$, both supported on the naturals $\N$ and thus having a common dominating measure; see Example~\ref{ex:example-from-ramdas-wang}. 
It turns out that $\Grow_b$ has a duality theorem without any restrictions. 

\begin{enumerate}
    \item 
Our main result, Theorem~\ref{T_strong_duality}, shows that \emph{under no assumptions on $\Pcal$ or $\Qcal$,} there always exists a pair $(\mu^*,\nu^*) \in \cconv^*(\Pcal)\times \cconv^*(\Qcal)$ such that
\begin{equation}\label{eq:intro-strong-duality-main}
\Grow_b =  \inf_{\mu \in \cconv^*(\Pcal)} \inf_{\nu \in \cconv^*(\Qcal)}  H(\nu \mid \mu) = H(\nu^* \mid \mu^*),
\end{equation}
where $\cconv^*(\Pcal)$ and $\cconv^*(\Qcal)$ are the closed convex hull of $\Pcal$ and $\Qcal$, respectively. Here we take the weak-$*$ closure in $\ba$, the space of bounded finitely additive measures. (We recall these concepts later.) One can also prove that for any $\Q \in \Mcal_1$,
\begin{equation}\label{eq:kl-equality-costar-peff}
\min_{\mu \in \cconv^*(\Pcal)} H(\Q \mid \mu) = \min_{\P \in \Peff} H(\Q \mid \P),
\end{equation}
so that \eqref{eq:intro-strong-duality-main}, in conjunction with~\eqref{eq:kl-equality-costar-peff} and~\eqref{eq:e-eb-ebb-pointQ}, implies~\eqref{eq:numeraire-duality}, justifying our claim that Theorem~\ref{T_strong_duality} generalizes the work of~\cite{larsson2025numeraire} to arbitrary $\Qcal$.

\item Following the suggestion of~\cite{grunwald2024safe}, we also study a generalization of GROW, called REGROW (relative GROW):
\[
\sup_{X\in\Ecal_b} \inf_{\Q \in \Qcal} \left( \E_\Q[\log X] - \xi(\Q) \right),
\]
for some bounded functional $\xi: \Qcal \to \R$, for example $\xi(\Q) = \sup_X \E_\Q[\log X]$ (the GROW criterion for a singleton $\Q$, called GRO), when this is finite and bounded over $\Qcal$. Theorem~\ref{thm:weak-star-offset-duality} establishes a strong duality result for arbitrary $\Pcal,\Qcal,\xi$ that involves the concave biconjugate of $\xi$.
\end{enumerate}

Next, we examine several interesting special cases where one can use \emph{unbounded} e-variables.

\begin{enumerate}
    \item[3.] First, if $\Pcal = \{\P\}$ is a singleton, and $\Qcal$ is convex with $H(\Q \mid \P) < \infty$ for all $\Q \in \Qcal$, we prove a strong duality result using a generalized information projection of $\P$ onto $\Qcal$:
\[
\Grow = \inf_{\Q \in \Qcal} H(\Q \mid \P).
\]
Example~\ref{ex:example-from-ramdas-wang} shows that despite strong duality holding for both $\Grow$ and $\Grow_b$, the two quantities are not equal in general. Example~\ref{ex:singletonP-continued} shows that if $H(\Q \mid \P) < \infty$ does not hold for all $\Q \in \Qcal$, then strong duality can fail, even though $\Q \ll \P$ for all $\Q \in \Qcal$. 

\item[4.] We generalize the above result to the case of arbitrary $\Pcal$, assuming that a joint information projection (JIPr) exists. A JIPr is defined as a pair $(\P^*,\Q^*)$ such that the sub-probability $\P^*$ is the reverse information projection of the probability $\Q^*$ onto $\Peff$ and $\Q^*$ is the generalized information projection (in the sense of~\cite{csiszar1984sanov}) of $\P^*$ onto $\Qcal$. When a JIPr exists, and (say) $H(\Q \mid \P^*) < \infty$ for all $\Q \in \Qcal$, we show that
\[
\Grow = \inf_{\Q \in \Qcal}\inf_{\P \in \Peff} H(\Q \mid \P),
\]
and the left-hand side is achieved by the e-variable $d\Q^*/d\P^*$.

\item[5.] Additionally, if $\Qcal$ is convex and compact in the setwise topology, then we show that
\[
\Grow = \Grow_b = \min_{\Q \in \Qcal} \min_{\P \in \Peff} H(\Q \mid \P).
\]
In the above two cases, if we further assume that (on Polish sample spaces) $\Pcal$ is convex and weakly compact, we show that $\Peff$ can be replaced by $\Pcal$. 
\item[6.] When working on Polish sample spaces, we show that if $\Pcal$ and $\Qcal$ are both convex and weakly compact, then again  
\[
\Grow = \Grow_b = \min_{\Q \in \Qcal} \min_{\P \in \Pcal} H(\Q \mid \P),
\]
but this time one can also further restrict to \textit{bounded continuous} e-variables.
\item[7.] Finally, we show that if $\Omega$ is finite, and $\Pcal,\Qcal$ are compact convex sets in the finite-dimensional probability simplex and if $\sup_{\P \in \Pcal} \P_i>0$ for each coordinate $i$,  then
\[
\Grow = \Grow_b = \min_{\Q \in \Qcal} \min_{\P \in \Pcal} H(\Q \mid \P).
\]
Further the GROW value is finite, attained by a bounded e-variable, which can be represented as the likelihood ratio of JIPr pairs that minimize the above right-hand side. 
\end{enumerate}

The first and sixth result above have an intriguing strong parallel to the main results of~\cite{larsson2026complete}, after swapping e-variables for tests, the GROW criterion for minimax risk, and the relative entropy for total variation distance.

We show a number of other results of independent interest,
along with several examples and counterexamples that demonstrate various subtleties in the aforementioned conditions.

The  paper is organized as follows. Section~\ref{sec:prelim} recaps some basic definitions and properties, especially of relative entropy when operating on finitely additive measures. Section~\ref{sec:main-results} presents  a strong duality for bounded GROW (and REGROW) without restriction on the sets of measures. Section~\ref{sec:grow} provides strong duality results for the unbounded GROW criterion, under some relatively mild restrictions. Section~\ref{sec:examples} discusses some interesting and nontrivial examples that demonstrate certain subtleties or challenges in these strong duality statements. The appendix contains all proofs not presented in the paper.

\section{Preliminaries}\label{sec:prelim}

We attempt to maintain the convention that for countably additive measures, we use Roman letters $\P,\Q,\RR,\S$, whereas for measures that could be finitely additive, we use Greek letters $\mu,\nu$.
Recall that the set of e-variables for $\Pcal$ is
\[
\Ecal = \left\{\text{all measurable } X \colon \Omega \to [0,\infty] \text{ such that } \int X d\P \le 1 \text{ for all } \P \in \Pcal\right\},
\]
and the set of all bounded e-variables is given by
\[
\Ecal_b = \left\{X \in \Ecal \colon \sup_{\omega \in \Omega} X(\omega) < \infty\right\} = \Ecal \cap \Bb,
\]
where $\Bb$ is the Banach space of all bounded measurable functions $f:\Omega\to \R$ with the supremum norm $\|f\|_\infty$.

Since $X = 1$ is an e-variable, all elements of $\Peff$, defined in \eqref{eq:260523}, are sub-probabilities. Any sub-probability which is setwise dominated by some element of $\Peff$ also belongs to $\Peff$. Moreover, $\Pcal$ and $\Peff$ have the same nullsets $A$ since if $\P(A) = 0$ for all $\P \in \Pcal$, then $\infty \1_A \in \Ecal$, and hence $\P(A) = 0$ for all $\P \in \Peff$. Finally, in the definition of $\Peff$ it is enough, thanks to the monotone convergence theorem, to let $X$ range over the set $\Ecal_b$. Hence we have
\begin{align} \label{eq:260525}
\Peff = 
\left\{\P \in \Mcal_+ \colon \int X d\P \le 1 \textnormal{ for all } X \in \Ecal\right\}
= \left\{\P \in \Mcal_+ \colon \int X d\P \le 1 \textnormal{ for all } X \in \Ecal_b\right\}.
\end{align}

We will also write $\ba=\ba(\Omega,\mathcal F)$ for the Banach space of all bounded finitely additive signed measures on $(\Omega,\mathcal F)$, endowed with the total variation norm. Its positive cone is denoted by $\ba_+$. Moreover, $\ba_1=\{\mu\in\ba_+:\mu(\Omega)=1\}$ denotes the set of all finitely additive probability measures on $(\Omega,\mathcal F)$. Each $\mu\in\ba_1$ acts on $\Bb$ through finitely additive integration; for $f\in\Bb$, we write $\E_\mu[f]=\int f\,d\mu$. Thus, $\ba_1$ may equivalently be viewed as the positive normalized part of the dual of $(\Bb,\|\cdot\|_\infty)$.
For $\mu\in\ba_1$, we will always let $\mu = \mu_c + \mu_p$ denote its Yosida-Hewitt decomposition, where $\mu_c\in\Mplus$ is countably additive and $\mu_p$ is purely finitely additive. 
Moreover, $\cconv^*$ denotes the weak-$*$ closed convex hull, that is, for a set $\mathcal A\subseteq\ba$, $\cconv^*(\mathcal A)$ denotes the closure of the convex hull of $\mathcal A$, taken in the space $\ba$ with the topology $\sigma(\ba, \Bb)$.

\begin{remark}\label{rem:setwise-weak-tv}
Unless specified otherwise, no topology is imposed on $\Omega$; we work on a general measurable
space $(\Omega,\mathcal F)$. If $\Omega$ is a topological space and
$\mathcal F$ is its Borel sigma algebra, then the usual weak topology on
$\mathcal M$ is $\sigma(\mathcal M,C_b)$, where $C_b$ denotes the family of bounded
continuous functions. This topology is coarser than the setwise topology
$\sigma(\mathcal M,\Bb)$ since $C_b\subseteq\Bb$. Thus setwise convergence
implies weak convergence, but not conversely; for example, on $\mathbb R$,
$\delta_{1/n}$ converges weakly to $\delta_0$, but not setwise.
On the other hand, the total variation topology is finer than
$\sigma(\mathcal M,\Bb)$. Indeed,
\[
\|\RR\|_{\mathrm{TV}}
=
\sup_{\|f\|_\infty\le1}
\left|\int f\,d\RR\right|,
\]
so total variation convergence is uniform convergence over the unit ball of
$(\Bb,\|\cdot\|_\infty)$, whereas convergence in $\sigma(\mathcal M,\Bb)$ is
only pointwise convergence against each fixed $f\in\Bb$.
\end{remark}

For a nonnegative measurable function $f: \Omega\to [0,\infty]$, and $\mu\in\ba_1$, we define the extended finitely additive integral as $\E_\mu[f]=\sup_{n\in \N}\E_{\mu}[f\wedge n]\in[0,\infty]$; equivalently, $\E_{\mu}[f]=\sup\{\E_{\mu}[g]:g\in\Bb, 0\le g\le f\}$. Clearly, this definition coincides with the familiar Lebesgue integral whenever $\mu\in\Mone\subseteq\ba_1$. Let us adopt the conventions $\log 0=-\infty$ and $\log \infty = \infty$. 
Whenever $\E_{\mu}[(\log f)^{-}]<\infty$, we set $\E_{\mu}[\log f] = \E_{\mu}[(\log f)^+] - \E_{\mu}[(\log f)^-]\in(-\infty,\infty]$. Whenever $\E_{\mu}[(\log f)^-]=\infty$, we define $\E_{\mu}[\log f]=-\infty$. Thus, by convention, this quantity equals $-\infty$ whenever the negative logarithmic part has infinite finitely additive integral.

The following convention is used when we evaluate the expression $q \log(q/p)$ for $q \in [0,1]$ and $p\geq0$:
\begin{equation} \label{eq_q_log_q_by_p_convention}
q \log\frac{q}{p} = \begin{cases} 0, & q = 0, \\ \infty, & p = 0 \text{ and } q>0.
\end{cases}
\end{equation}
This ensures that $q \log(q/p)$ is jointly convex and lower semicontinuous in $(p,q)$.

\begin{definition}[Relative entropy over $\ba_1$]\label{def:H}
We write $\Pi$ for the set of all finite measurable partitions of $\Omega$. For any $\nu \in \ba_1$,  $\mu \in \ba_+$, and finite measurable partition $\pi = \{A_1,\ldots,A_n\}$ we define, using the convention \eqref{eq_q_log_q_by_p_convention},
\[
H_\pi(\nu \mid \mu) = \sum_{i=1}^n \nu(A_i) \log \frac{\nu(A_i)}{\mu(A_i)}.
\]
Their Kullback--Leibler divergence or relative entropy is then given by
\[
H(\nu \mid \mu) = \sup_{\pi \in \Pi} H_\pi(\nu \mid \mu).
\]
\end{definition}

For $\Q\in\Mone$ and $\P\in\Mplus$, it can be shown that Definition~\ref{def:H} reduces to 
\[
H(\Q\mid \P)=
\begin{cases}
\displaystyle \int\log\left(\frac{d\Q}{d\P}\right)\,d\Q, &\Q\ll \P,\\
\infty, &\Q\not\ll \P,
\end{cases}
\]
which we note can take negative values when $\P(\Omega) > 1$.

Even for finitely additive probability measures, zero relative entropy implies equality. 

\begin{lemma}\label{lem:weak-star-jipr-zero-iff-equal}
For $\mu,\nu\in\ba_1$, $H(\nu \mid \mu) \geq 0$ with equality if and only if $\mu=\nu$.    
\end{lemma}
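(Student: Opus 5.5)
Because $H$ is defined as a supremum over finite measurable partitions, the whole argument reduces to the finite-dimensional Gibbs inequality applied one partition at a time. Both $\mu$ and $\nu$ are finitely additive probabilities, so for any $\pi=\{A_1,\dots,A_n\}\in\Pi$ the vectors $(\nu(A_i))_i$ and $(\mu(A_i))_i$ are genuine probability vectors on $\{1,\dots,n\}$. No countable additivity is needed for this; finite additivity together with $\mu(\Omega)=\nu(\Omega)=1$ suffices.

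\textbf{Nonnegativity.} Take the trivial partition $\pi=\{\Omega\}$. This gives $H_\pi(\nu\mid\mu)=1\cdot\log(1/1)=0$, so $H(\nu\mid\mu)\ge 0$ immediately. Alternatively, for every $\pi$ the discrete Gibbs inequality gives $H_\pi(\nu\mid\mu)\ge 0$. It follows from concavity of $\log$ via Jensen over the indices with $\nu(A_i)>0$:
\[
-H_\pi(\nu\mid\mu)=\sum_{\nu(A_i)>0}\nu(A_i)\log\frac{\mu(A_i)}{\nu(A_i)}\le \log\sum_{\nu(A_i)>0}\mu(A_i)\le 0 .
\]
Terms with $\mu(A_i)=0<\nu(A_i)$ make the left side $-\infty$, consistent with convention \eqref{eq_q_log_q_by_p_convention}.

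\textbf{Equality.} If $\mu=\nu$, every $H_\pi$ vanishes, so $H(\nu\mid\mu)=0$. For the converse, suppose $H(\nu\mid\mu)=0$ and fix any $A\in\mathcal F$. Apply the definition to the two-set partition $\pi=\{A,A^c\}$; then $0\le H_\pi(\nu\mid\mu)\le H(\nu\mid\mu)=0$. This says the binary relative entropy between $(\nu(A),1-\nu(A))$ and $(\mu(A),1-\mu(A))$ is zero. By the equality case of the discrete Gibbs inequality, which comes from strict concavity of $\log$ in the Jensen step, this forces $\nu(A)=\mu(A)$. We need both Jensen inequalities above to be equalities: the first forces the ratio $\mu(A_i)/\nu(A_i)$ to be constant on the indices with $\nu(A_i)>0$, and the second forces $\mu$ to put no mass where $\nu$ vanishes, so the constant is $1$. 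Since $A$ is arbitrary, $\mu=\nu$ as set functions on $\mathcal F$, hence as elements of $\ba_1$.

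The only step needing care is the equality case of the finite Gibbs inequality, with the boundary conventions for zero masses handled correctly. Everything else is immediate from Definition~\ref{def:H}, since the supremum over partitions lets us test each set separately.
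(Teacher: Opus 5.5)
Your proof is correct and follows essentially the same route as the paper: you test the two-set partition $\{A,A^c\}$, use the nonnegativity of finite-dimensional relative entropy to get $H_\pi(\nu\mid\mu)=0$, and invoke the equality case of Gibbs' inequality to conclude $\nu(A)=\mu(A)$ for every $A\in\mathcal F$. Your use of the trivial partition $\{\Omega\}$ to obtain $H(\nu\mid\mu)\ge 0$ is a slightly cleaner way to state the nonnegativity claim, which the paper leaves implicit.
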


All proofs for the statements in this section are deferred to Appendix~\ref{sec:missing-proofs}.

There is also a Donsker--Varadhan dual representation of relative entropy.  
\begin{lemma}\label{lem:extended-entropy-sec}
For $\nu \in \ba_1$ and $\mu \in \ba_+$, we have
\[
H(\nu\mid\mu)=\sup_{f\in\Bb}\left(\int fd\nu-\log\int e^f d\mu\right).
\]

Moreover, for every measurable
\(f:\Omega\to[-\infty,\infty]\) such that
\(0<\int e^f\,d\mu<\infty\), we have
\[
\int f\,d\nu-\log\int e^f\,d\mu\leq H(\nu\mid \mu).
\]
\end{lemma}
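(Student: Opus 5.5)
The plan is to prove the two inequalities separately, working throughout at the level of finite partitions $\pi=\{A_1,\dots,A_n\}$, where Definition~\ref{def:H} gives $H$ as a supremum over $\pi$.

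\emph{Upper bound.} We show $\int f\,d\nu-\log\int e^f\,d\mu\le H(\nu\mid\mu)$ for every $f\in\Bb$. First take $f$ simple, $f=\sum_i c_i\1_{A_i}$ on a partition $\pi$. Then the left side equals $\sum_i \nu(A_i)c_i-\log\sum_i e^{c_i}\mu(A_i)$. The finite-dimensional Gibbs/Jensen inequality bounds this by $H_\pi(\nu\mid\mu)$:
\begin{itemize}
\item if some $\mu(A_i)=0$ with $\nu(A_i)>0$, the bound is trivial, since $H_\pi=\infty$;
\item otherwise apply concavity of $\log$ with weights $\nu(A_i)$ to $e^{c_i}\mu(A_i)/\nu(A_i)$, summing over $i$ with $\nu(A_i)>0$;
\item if $\int e^f\,d\mu=0$, set $-\log 0=\infty$; this forces $\mu(A_i)=0$ wherever $\nu(A_i)>0$, so $H_\pi=\infty$ and the inequality is consistent.
\end{itemize}
For general $f\in\Bb$, approximate uniformly by simple functions $f_k$. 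Finitely additive integration of bounded functions is continuous in sup norm, and $\int e^{f_k}\,d\mu\to\int e^f\,d\mu$ likewise, so the bound passes to the limit.

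\emph{Lower bound.} Given $\pi$ with $H_\pi(\nu\mid\mu)$ finite or infinite, choose $f=\sum_i \log(\nu(A_i)/\mu(A_i))\1_{A_i}$.
\begin{itemize}
\item On cells with $\nu(A_i)=0$, put $f=-M$.
\item On cells with $\mu(A_i)=0<\nu(A_i)$, put $f=M$.
\end{itemize}
Compute the objective and let $M\to\infty$. In the finite case the objective tends to $H_\pi$ minus $\log$ of $\nu$-mass (which is $\le 1$, hence a nonnegative correction); in the infinite case it diverges. Taking the supremum over $\pi$ gives $\ge H(\nu\mid\mu)$ and proves the first formula.

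\emph{Second statement.} Let $f$ be measurable $[-\infty,\infty]$-valued with $0<\int e^f\,d\mu<\infty$. The main obstacle is making sense of $\int f\,d\nu$ under the paper's extended-integral conventions.
\begin{itemize}
\item If $\int f^-\,d\nu=\infty$, the left side is $-\infty$ and there is nothing to prove.
\item Otherwise truncate: $f_{n,m}=(f\wedge n)\vee(-m)\in\Bb$, and apply the first part.
\end{itemize}
Send $m\to\infty$ first. Then $\int e^{f_{n,m}}\,d\mu$ decreases to $\int e^{f\wedge n}\,d\mu$. This requires care for finitely additive $\mu$: the difference is at most $e^{-m}\mu(\Omega)$, which handles it uniformly. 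Meanwhile $\int f_{n,m}\,d\nu\ge\int (f\wedge n)\,d\nu-\int f^-\1\ldots$, and more simply $\int f_{n,m}\,d\nu\ge \int f\wedge n\,d\nu$ in the extended sense, since $f_{n,m}\ge f\wedge n$. This gives
\[
\int (f\wedge n)\,d\nu - \log\int e^{f\wedge n}\,d\mu\le H(\nu\mid\mu).
\]
Then send $n\to\infty$. By definition of the extended integral, $\int (f\wedge n)^+\,d\nu\uparrow\int f^+\,d\nu$. Also $\int e^{f\wedge n}\,d\mu\le \int e^f\,d\mu+\ldots$; in fact $e^{f\wedge n}\le e^f$ pointwise, and monotonicity of the extended integral gives $\int e^{f\wedge n}\,d\mu\le\int e^f\,d\mu$. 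So the $\log$ term is bounded by $\log\int e^f\,d\mu$, and the inequality follows.
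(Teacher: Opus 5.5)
Your proof is correct and follows essentially the same route as the paper. For simple $f$ you use the finite-partition Gibbs inequality, then pass to general $f$ by uniform approximation; for the lower bound you use explicit simple test functions on a partition; and you extend to extended-valued $f$ by truncation. The only differences are cosmetic: you apply Jensen directly rather than building the auxiliary probability vector $r_i$, and you treat the infinite case inside the same test function. In the last part you use a two-parameter truncation with one-sided bounds, which needs only $\int e^{f\wedge n}\,d\mu\le\int e^f\,d\mu$, whereas the paper sandwiches $e^{f_n}$ to get convergence.
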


\begin{proposition}\label{prop:ba-countably-additive-reduction}
    For $\Q \in \Mcal_1$ and $\mu \in \ba_+$, we have
    \(
    H(\Q \mid \mu) = H(\Q \mid \mu_c).
    \)
For $\nu\in\ba_{1}\setminus\Mone$ and $\P\in\Mplus$, we have $H(\nu\mid \P)=\infty$. Thus, for $\nu \in \ba_1$ and $\P\in\Mplus$, $H(\nu \mid \P) < \infty$ implies that $\nu \in \Mone$ and $\nu \ll \P$.
\end{proposition}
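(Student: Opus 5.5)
We prove the three assertions in order; each comes directly from the partition definition in Definition~\ref{def:H}, together with the Yosida--Hewitt structure of purely finitely additive measures.

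\emph{First claim, $H(\Q\mid\mu)=H(\Q\mid\mu_c)$.} Since $\mu_p\ge 0$, we have $\mu\ge\mu_c$ setwise. Each term $q\log(q/p)$ is nonincreasing in $p$, so $H_\pi(\Q\mid\mu)\le H_\pi(\Q\mid\mu_c)$ for every partition $\pi$, and hence $H(\Q\mid\mu)\le H(\Q\mid\mu_c)$.

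For the reverse inequality, recall that $\mu_p$ is purely finitely additive and $\Q+\mu_c$ is countably additive. By the standard characterization of pure finite additivity, for every $\varepsilon>0$ there is a set $B\in\mathcal F$ with $\mu_p(B)=0$ and $(\Q+\mu_c)(B^c)<\varepsilon$. Fix $\pi=\{A_1,\dots,A_n\}$ and refine it to $\pi'=\{A_i\cap B\}_i\cup\{B^c\}$. On the pieces $A_i\cap B$, $\mu$ and $\mu_c$ agree. The remaining cell $B^c$ contributes $\Q(B^c)\log\bigl(\Q(B^c)/\mu(B^c)\bigr)$. Because $\mu(B^c)\ge\mu_c(B^c)$ and $\Q(B^c)\le\varepsilon$, this contribution is bounded below by $\Q(B^c)\log\Q(B^c)-\Q(B^c)\log\mu(\Omega)$, which tends to $0$ as $\varepsilon\to0$.

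It then remains to compare $H_{\pi'}(\Q\mid\mu_c)$ with $H_\pi(\Q\mid\mu_c)$. Here we use continuity of $(q,p)\mapsto q\log(q/p)$. As $\varepsilon\to0$, we have $\Q(A_i\cap B)\to\Q(A_i)$ and $\mu_c(A_i\cap B)\to\mu_c(A_i)$. Lower semicontinuity in the case $\mu_c(A_i)=0<\Q(A_i)$ still gives a limit inferior of $+\infty$. Therefore $\liminf_{\varepsilon\to0} H_{\pi'}(\Q\mid\mu)\ge H_\pi(\Q\mid\mu_c)$, and taking the supremum over $\pi$ gives $H(\Q\mid\mu)\ge H(\Q\mid\mu_c)$.

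This step is the main technical obstacle. One has to handle the cell $B^c$ carefully (including the case $\mu(B^c)$ possibly large) and the boundary cases of the convention~\eqref{eq_q_log_q_by_p_convention}.

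\emph{Second claim, $H(\nu\mid\P)=\infty$ for $\nu\in\ba_1\setminus\Mone$ and $\P\in\Mplus$.} Since $\nu\notin\Mone$, the purely finitely additive part $\nu_p\ne0$; set $c=\nu_p(\Omega)>0$. Apply the same characterization to $\nu_p$ against the countably additive measure $\P+\nu_c$: for every $\varepsilon>0$ there is $C$ with $\nu_p(C^c)=0$ and $\P(C)+\nu_c(C)<\varepsilon$. Then $\nu(C)\ge\nu_p(C)=c$ while $\P(C)<\varepsilon$.

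Use the two-cell partition $\{C,C^c\}$. The $C$-term is at least $c\log(c/\varepsilon)$ (using $\nu(C)\in[c,1]$ and monotonicity of $q\log(q/p)$ in $p$). The $C^c$-term is bounded below by $-\log\bigl(1\vee\P(\Omega)\bigr)-e^{-1}$ uniformly, since $q\log q\ge -e^{-1}$. Letting $\varepsilon\to0$ gives $H(\nu\mid\P)=\infty$.

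\emph{Third claim.} If $H(\nu\mid\P)<\infty$, the second claim forces $\nu\in\Mone$. If moreover $\nu\not\ll\P$, pick $A$ with $\P(A)=0<\nu(A)$. The partition $\{A,A^c\}$ then gives a term $\infty$ by convention~\eqref{eq_q_log_q_by_p_convention}, contradicting finiteness. Hence $\nu\ll\P$.
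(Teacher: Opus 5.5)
Your proof is correct and follows the paper's argument for the first claim: a set separating $\mu_p$ from $\Q+\mu_c$, refinement of the partition by that set, and lower semicontinuity of $q\log(q/p)$. Your exact choice $\mu_p(B)=0$ is a harmless sharpening of the paper's $\mu_p(B_n)+\RR(B_n^c)\to0$, but note that the lower bound on the $B^c$-term comes from $\mu(B^c)\le\mu(\Omega)$, not from $\mu(B^c)\ge\mu_c(B^c)$. For the second claim the paper avoids the Yosida--Hewitt decomposition and uses failure of continuity from above to get $A_n\downarrow\varnothing$ with $\nu(A_n)\ge\delta$ and $\P(A_n)\to0$; your route through $\nu_p$ produces the same kind of set and works equally well.
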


Last, we note that for simple alternatives, it is enough to optimize over \emph{bounded} e-variables.

\begin{lemma}\label{lem:E-to-Eb}
    For any $\Q \in \Mone$,
   $\sup_{X \in \Ecal}\E_\Q[\log X] = \sup_{X \in \Ecal_b} \E_\Q[\log X]$. 
\end{lemma}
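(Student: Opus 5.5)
The inequality $\sup_{X\in\Ecal_b}\E_\Q[\log X]\le\sup_{X\in\Ecal}\E_\Q[\log X]$ is immediate from $\Ecal_b\subseteq\Ecal$. So the plan is to prove the reverse inequality by truncation. Fix $X\in\Ecal$ with $\E_\Q[\log X]>-\infty$; if no such $X$ exists there is nothing to prove. For $n\in\N$ set $X_n = X\wedge n$. Then $0\le X_n\le X$, so $\E_\P[X_n]\le\E_\P[X]\le 1$ for every $\P\in\Pcal$. Hence $X_n\in\Ecal_b$, and the goal reduces to showing $\E_\Q[\log X_n]\to\E_\Q[\log X]$.

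We split $\log X_n$ into its positive and negative parts.

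\emph{Negative part.} For $n\ge1$ we have $(\log X_n)^- = (\log X)^-$, because truncating at $n\ge 1$ does not affect values below $1$. Since $\E_\Q[\log X]>-\infty$, this common quantity is $\Q$-integrable.

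\emph{Positive part.} We have $(\log X_n)^+ = (\log X)^+\wedge\log n$, which increases pointwise to $(\log X)^+$. This includes points where $X=\infty$, where the limit is $\infty=\log\infty$. By the monotone convergence theorem, $\E_\Q[(\log X_n)^+]\uparrow\E_\Q[(\log X)^+]\in[0,\infty]$.

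Combining the two parts gives $\E_\Q[\log X_n]\uparrow\E_\Q[\log X]$, and this holds even when the limit is $+\infty$. Therefore $\sup_{X\in\Ecal_b}\E_\Q[\log X]\ge\E_\Q[\log X]$ for every $X\in\Ecal$, and taking the supremum over $X$ gives the claim.

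The argument is routine. The only point needing care is the convention for $\E_\Q[\log X]$ when the negative part is infinite, and the value $X=\infty$ on sets of positive $\Q$-measure. Both are handled by restricting to $X$ with finite negative part, and by noting that monotone convergence covers the case $\E_\Q[\log X]=\infty$: there the bounded truncations approach $+\infty$, so both suprema are $+\infty$.
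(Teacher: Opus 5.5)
Your proposal is correct and follows essentially the same route as the paper: truncate to $X\wedge n\in\Ecal_b$ and use monotone convergence, restricting to $X$ with finite negative part. Your explicit checks that $(\log X_n)^-=(\log X)^-$ for $n\ge 1$ and that the case $X=\infty$ is covered just spell out what the paper compresses into ``by monotone convergence.''
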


Note however that the supremum on the left-hand side is achieved (by the numeraire), but on the right-hand side in general it is not.

\section{Strong duality for  bounded GROW and REGROW}\label{sec:main-results}
This section will present and prove the main strong duality result.
The reader may find the following weak duality useful for intuition for the results that follow:
\begin{align} 
\sup_{X \in \Ecal} \inf_{\Q \in \Qcal} \E_\Q[\log X] &= \sup_{X \in \Ecal} \inf_{\Q \in \conv(\Qcal)} \E_\Q[\log X] \leq \inf_{\Q \in \conv(\Qcal)} \sup_{X \in \Ecal} \E_\Q[\log X] \nonumber \\ &
=
\inf_{\Q \in \conv(\Qcal)} \inf_{\P \in \Peff}
H(\Q \mid \P) 
\leq 
\inf_{\Q \in \conv(\Qcal)} \inf_{\P \in \conv(\Pcal)}
 H(\Q \mid \P),  \label{weak_duality_general}
\end{align}
and note that $\Ecal$ could have been replaced by $\Ecal_b$ above. Our aim is to establish when \emph{strong duality} holds, i.e., when we have equality in suitably adjusted versions of the above display.

\begin{theorem} \label{T_strong_duality}
The following strong duality holds:
\begin{equation} \label{eq_strong_duality_general}
\sup_{X \in \Ecal_b} \inf_{\Q \in \Qcal} \E_\Q[\log X] = 
\min_{\nu \in \cconv^*(\Qcal)} 
\min_{\mu \in \cconv^*(\Pcal)}
 H(\nu \mid \mu),
\end{equation}
even if one side, and then also the other side, equals infinity. Further, there exists a pair $(\mu^*,\nu^*) \in \cconv^*(\Pcal)\times\cconv^*(\Qcal)$ that achieves the minimum on the right-hand side.
\end{theorem}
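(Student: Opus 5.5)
We will prove the two inequalities separately and obtain attainment from compactness and lower semicontinuity. Let $K_P=\cconv^*(\Pcal)$ and $K_Q=\cconv^*(\Qcal)$. Both are weak-$*$ closed subsets of the unit ball of $\ba=\Bb^*$, which is weak-$*$ compact by Banach--Alaoglu, so both are compact and contained in $\ba_1$. By Lemma~\ref{lem:extended-entropy-sec}, $H(\nu\mid\mu)$ is a supremum over $f\in\Bb$ of the weak-$*$ continuous maps $(\mu,\nu)\mapsto \int f\,d\nu-\log\int e^f\,d\mu$. Hence $H$ is jointly weak-$*$ lower semicontinuous and jointly convex on $\ba_1\times\ba_1$, and its infimum over the compact set $K_Q\times K_P$ is attained at some pair $(\mu^*,\nu^*)$, possibly with value $+\infty$. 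This gives the existence claim.

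\emph{Weak duality ($\le$).} Fix $X\in\Ecal_b$. If $\inf_\Q\E_\Q[\log X]=-\infty$ there is nothing to prove. Otherwise, for $\varepsilon>0$ put $f_\varepsilon=\log(X+\varepsilon)\in\Bb$. Then $\int e^{f_\varepsilon}d\P\le 1+\varepsilon$ for all $\P\in\Pcal$. This inequality is linear in $\P$ and weak-$*$ closed, so it extends to all $\mu\in K_P$. Similarly, $\int f_\varepsilon\,d\Q\ge \inf_{\Q'\in\Qcal}\E_{\Q'}[\log X]=:g$ for $\Q\in\Qcal$ extends to all $\nu\in K_Q$. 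Lemma~\ref{lem:extended-entropy-sec} then gives $H(\nu\mid\mu)\ge g-\log(1+\varepsilon)$. Letting $\varepsilon\downarrow 0$ yields $\mathrm{LHS}\le \mathrm{RHS}$.

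\emph{Strong duality ($\ge$).} This is the main step. We apply a minimax theorem to
\[
L(f,\mu,\nu)=\int f\,d\nu-\log\int e^f\,d\mu,
\]
with $f\in\Bb$ and $(\mu,\nu)\in K_P\times K_Q$. The map $L$ is concave in $f$ because $\log\int e^f d\mu$ is convex. It is convex in $(\mu,\nu)$ because it is linear in $\nu$ and $-\log$ of a linear function of $\mu$. It is weak-$*$ lower semicontinuous in $(\mu,\nu)$ for fixed $f$. Since $K_P\times K_Q$ is convex and compact, Sion's (or Fan's) minimax theorem gives
\[
\min_{K_P\times K_Q}\sup_{f}L=\sup_{f}\min_{K_P\times K_Q}L .
\]
By Lemma~\ref{lem:extended-entropy-sec} the left side is the right-hand side of \eqref{eq_strong_duality_general}. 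It remains to show that for each fixed $f\in\Bb$,
\[
\min_{\mu,\nu}L(f,\mu,\nu)=\inf_{\nu\in K_Q}\int f\,d\nu-\log\sup_{\mu\in K_P}\int e^f d\mu
\]
is at most $\mathrm{LHS}$. Set $c=\sup_{\P\in\Pcal}\int e^f d\P$, which equals the supremum over $K_P$ by linearity and continuity, and satisfies $c>0$. Put $X=e^f/c$. Then $X\in\Ecal_b$, and the infimum over $K_Q$ of $\int f\,d\nu$ equals $\inf_{\Q\in\Qcal}\int f\,d\Q$. Therefore
\[
\min_{\mu,\nu}L(f,\mu,\nu)=\inf_{\Q\in\Qcal}\E_\Q[\log X]\le \mathrm{LHS}.
\]
Since $\Bb$ is a vector space, Sion's theorem needs no compactness on the $f$-side, only compactness of $K_P\times K_Q$.

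The obstacle I expect is verifying the minimax hypotheses in the extended-valued setting. Specifically, $L$ must be finite: $\int e^f d\mu\ge e^{-\|f\|_\infty}>0$ on $\ba_1$, so this holds, and the appropriate semicontinuity directions must be checked. The case where $\mathrm{RHS}=\infty$ also needs care; the sup--min identity from Sion handles it directly, since the min over the compact side of each lsc function is attained and the equality holds in $(-\infty,\infty]$. The identity $\sup_{K_P}\int e^f d\mu=\sup_{\Pcal}\int e^f d\P$ follows because a weak-$*$ continuous linear functional has the same supremum on a set and on its closed convex hull.
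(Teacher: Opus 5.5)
Your proof is correct, but it is organized differently from the paper's. The paper obtains the theorem as the case $\xi=0$ of Theorem~\ref{thm:weak-star-offset-duality}, and that proof uses two nested minimax steps:
\begin{itemize}
\item it first rewrites the left-hand side as $\sup_{f\in\Gamma}\inf_{\nu\in\cconv^*(\Qcal)}\int f\,d\nu$, with $\Gamma=\{f\in\Bb:\sup_{\P\in\Pcal}\E_\P[e^f]\le 1\}$ (Lemma~\ref{L:260531}, via the mixture $\varepsilon+(1-\varepsilon)X$);
\item it then applies Sion to swap $f\in\Gamma$ with $\nu\in\cconv^*(\Qcal)$;
\item finally, for each fixed $\nu\in\ba_1$, it evaluates $\sup_{f\in\Gamma}\int f\,d\nu=\min_{\mu\in\cconv^*(\Pcal)}H(\nu\mid\mu)$ by a second Sion argument over $\Bb\times\cconv^*(\Pcal)$ (Lemma~\ref{lem:generalization-of-numeraire}).
\end{itemize}

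You instead apply Sion once, over $\Bb\times\bigl(\cconv^*(\Pcal)\times\cconv^*(\Qcal)\bigr)$. This works because $L(f,\mu,\nu)$ splits into a part linear in $\nu$ and a part convex in $\mu$. Joint convexity and weak-$*$ continuity in $(\mu,\nu)$ are then immediate. The inner minimum also factorizes as $\inf_{\Q\in\Qcal}\E_\Q[f]-\log\sup_{\P\in\Pcal}\E_\P[e^f]$, which is exactly the worst-case e-power of the normalized e-variable $e^f/\sup_{\P\in\Pcal}\E_\P[e^f]$. Since the normalization is built into the Donsker--Varadhan objective, you never need the set $\Gamma$. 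You also get the inequality ``$\le$'' from a separate direct weak-duality argument, rather than reading both directions off a single chain of equalities.

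Your route is shorter and self-contained for $\xi=0$. It also extends verbatim to the offset case: add $-\xi^{**}_\Qcal(\nu)$ to $L$, which is convex and weak-$*$ lower semicontinuous in $\nu$ by Lemma~\ref{lem:offset-transform}. The inner minimum then becomes $\inf_{\Q\in\Qcal}(\E_\Q[f]-\xi(\Q))-\log\sup_{\P\in\Pcal}\E_\P[e^f]$.

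The paper's nested route buys something different. It isolates the single-alternative identity $\sup_{X\in\Ecal_b}\int\log X\,d\nu=\min_{\mu\in\cconv^*(\Pcal)}H(\nu\mid\mu)$, valid for every finitely additive $\nu\in\ba_1$, as a standalone lemma. The paper reuses that lemma to rewrite the REGROW value stated after Theorem~\ref{thm:weak-star-offset-duality} as $\min_{\nu\in\cconv^*(\Qcal)}\bigl(\Grow(\nu)-\Grow^{**}_\Qcal(\nu)\bigr)$.
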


This theorem follows from Theorem~\ref{thm:weak-star-offset-duality} by setting $\xi=0$ therein (in which case  $\xi^{**}_\Qcal=0$ on $\cconv^*(\Qcal)$).

One may ask whether considering the effective null or closures in $\ba$ is really necessary.
Here we present an example where 
$\Qcal$ is a singleton, $\Pcal$ is convex,  
$\inf_{\P \in \Pcal} H(\Q \mid \P)$ is achieved in $\Pcal$ and yet this value is vastly different from $\inf_{\mu \in \cconv^*(\Pcal)} H(\Q \mid \mu)$ and from $\inf_{\P \in \Peff} H(\Q \mid \P)$.

\begin{example}
    Let $\Omega = [0,1]$. Let $\Q = \mathsf U$ be the uniform distribution. Let $\Pcal$ be the convex hull of $\P_0$ and all $\{\delta_z:z\in[0,1]\}$, where $\delta_z$ denotes a Dirac delta mass at $z$ and $\P_0 = \varepsilon \mathsf U + (1-\varepsilon)\delta_0$ for some small constant $\varepsilon > 0$.  Clearly, $\P_0$ achieves the infimum $\inf_{\P \in \Pcal} H(\Q \mid \P)$, which can be made arbitrarily large by letting $\varepsilon$ tend to zero. However, $\Peff$ contains all distributions on $[0,1]$, and thus contains $\Q$, causing $\inf_{\P \in \Peff} H(\Q \mid \P) = 0$. \citet[Theorem 3.3]{larsson2026complete} shows that $\Peff \cap \Mone=\cconv^*(\Pcal) \cap \Mone$ and so $\Q \in \cconv^*(\Pcal)$, yielding $\inf_{\mu \in \cconv^*(\Pcal)} H(\Q \mid \mu) = 0$.
\end{example}

The following example shows that every minimizing pair $(\mu^*,\nu^*)$ on the right-hand side of \eqref{eq_strong_duality_general} can in fact be purely finitely additive.  

\begin{example}\label{ex:weak-star-jipr-necessarily-finitely-additive}
Let $\Omega=\mathbb N$ be equipped with its power set sigma algebra.
 For each $n\in\mathbb N$,  define
\[
\P_n=\frac1n \sum_{k=1}^n \delta_k,\qquad \Q_n=\frac1n \sum_{k=2}^{n+1} \delta_k
\]
and set $\Pcal=\{\P_n:n\in\N\}$ and $\Qcal=\{\Q_n:n\in\N\}$.
Then the right-hand side of \eqref{eq_strong_duality_general} is equal to zero. Moreover, every minimizing pair $ (\mu^*, \nu^*) \in \cconv^*(\Pcal) \times \cconv^*(\Qcal)$ satisfies $ \mu^*=\nu^* \in
\cconv^*(\Pcal) \cap \cconv^*(\Qcal)$.
Every such common minimizer is purely finitely additive. In particular, there is no minimizing pair with either $\mu^*\in\Mone$ or $\nu^*\in\Mone$. These claims are argued in Appendix~\ref{app:example}.
\end{example}

 \cite{grunwald2024safe} point out that $\GROW$ can be too pessimistic as a criterion, as it is a worst case over all alternatives, and thus any (approximately) GROW e-variable may achieve poor e-power against an ``easy'' $\Q$ in order to preserve optimal e-power against a worst-case $\Q$. To address this, they suggest the $\REGROW$ criterion, which normalizes the objective by the best achievable growth rate for that alternative. In this sense, a REGROW e-variable attempts to achieve (as much as possible) nearly optimal e-power for every $\Q \in \Qcal$, and thus is a possibly more appropriate criterion to work with, motivating us to present the following theorem (for general bounded offsets $\xi$).

\begin{theorem}\label{thm:weak-star-offset-duality}
Let $\xi:\Qcal\to\R$ be a bounded function, and
define its concave biconjugate $\xi^{**}_\Qcal$  for $\nu \in \ba_+$ by 
\begin{equation} \label{eq:260531.1}
\xi_{\Qcal}^{**}(\nu)=\inf_{f\in\Bb}\left(\int f\,d\nu - \inf_{\Q\in\Qcal}\left(\E_\Q[f]-\xi(\Q)\right)\right).
\end{equation}
Then the following strong duality holds:
\begin{align}
\sup_{X\in\Ecal_{b}}\inf_{\Q\in\Qcal}(\E_\Q[\log X]-\xi(\Q)) =
\min_{\nu \in \cconv^*(\Qcal)}
\min_{\mu \in \cconv^*(\Pcal)} 
\left(H(\nu\mid\mu)-\xi^{**}_{\Qcal}(\nu)\right),\label{eq:weak-star-offset-form}
\end{align}
even if one side, and then also the other side, equals infinity. Further, there exists a pair $(\mu^*,\nu^*) \in \cconv^*(\Pcal)\times\cconv^*(\Qcal)$ that achieves the minimum on the right-hand side.
\end{theorem}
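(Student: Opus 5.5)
The plan is a minimax argument in the dual pair $(\Bb,\ba)$, with compactness supplied by Banach--Alaoglu. Write $f=\log X$. Then $X\in\Ecal_b$ with $\inf X>0$ corresponds to $f\in\Bb$ with $\int e^f\,d\P\le1$ for all $\P\in\Pcal$. Since $\mu\mapsto\int e^f d\mu$ is weak-$*$ continuous and affine, this is equivalent to $\sup_{\mu\in\cconv^*(\Pcal)}\int e^f d\mu\le1$. Similarly, $\inf_{\Q\in\Qcal}(\E_\Q[f]-\xi(\Q))$ is unchanged if we pass to $\nu\in\cconv^*(\Qcal)$ with offset $\xi^{**}_\Qcal(\nu)$. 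To see this, note that by definition \eqref{eq:260531.1}, $\int f d\nu-\xi^{**}_\Qcal(\nu)\ge\inf_\Q(\E_\Q f-\xi(\Q))$ for every $\nu$. Conversely, $\xi^{**}_\Qcal(\Q)\ge\xi(\Q)$ for $\Q\in\Qcal$ (take $f$ in the inf and bound the inner inf by its value at $\Q$), so restricting to $\nu=\Q$ gives the reverse inequality. Unbounded-below $X$ (with zeros) is handled by truncation $X\vee\epsilon$ rescaled by $1/(1+\epsilon)$, or by observing directly that the left side is a sup which these approximate. Hence the left side of \eqref{eq:weak-star-offset-form} equals
\[
L=\sup_{f\in\Bb}\ \inf_{\nu\in\cconv^*(\Qcal)}\Bigl(\int f\,d\nu-\xi^{**}_\Qcal(\nu)-\log\sup_{\mu\in\cconv^*(\Pcal)}\int e^f d\mu\Bigr).
\]
Here the normalization $f\mapsto f-\log\sup_\mu\int e^fd\mu$ makes the constraint automatic.

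For the weak duality direction we use Lemma~\ref{lem:extended-entropy-sec}. For any $f$ and any $\mu\in\cconv^*(\Pcal)$, the Donsker--Varadhan bound gives $\int f d\nu-\log\sup_{\mu'}\int e^fd\mu'\le\int fd\nu-\log\int e^f d\mu\le H(\nu\mid\mu)$. This yields $L\le\inf_{\nu,\mu}(H(\nu\mid\mu)-\xi^{**}_\Qcal(\nu))$.

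For the reverse inequality we apply a minimax theorem of Ky Fan/Sion type to
\[
\Phi(f;\mu,\nu)=\int f\,d\nu-\log\int e^f d\mu-\xi^{**}_\Qcal(\nu)
\]
on $\Bb\times K$, where $K=\cconv^*(\Pcal)\times\cconv^*(\Qcal)$. The set $K$ is convex and weak-$*$ compact by Banach--Alaoglu, since it lies in the unit ball of $\ba$. The structure is as follows:
\begin{itemize}
\item $\Phi$ is concave in $f$, because $\log\int e^fd\mu$ is convex.
\item $\Phi$ is convex in $(\mu,\nu)$, because $-\log$ is convex and $-\xi^{**}_\Qcal$ is convex as the negative of an infimum of affine functions.
\item $\Phi$ is lower semicontinuous in $(\mu,\nu)$ for each fixed $f$, since $\int e^fd\mu>0$ varies continuously and $\xi^{**}_\Qcal$ is an infimum of weak-$*$ continuous affine maps, hence upper semicontinuous.
\end{itemize}
The minimax theorem with the compact side being the inf side then gives
\[
\inf_{K}\sup_f\Phi=\sup_f\inf_K\Phi .
\]
By Lemma~\ref{lem:extended-entropy-sec}, the left side is $\inf_K(H(\nu\mid\mu)-\xi^{**}_\Qcal(\nu))$. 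Moreover, the map $(\mu,\nu)\mapsto\sup_f\Phi$ is a supremum of lower semicontinuous functions, hence lower semicontinuous on the compact set $K$, so the minimum is attained. This gives the pair $(\mu^*,\nu^*)$ and also covers the value $+\infty$.

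It remains to compare $\sup_f\inf_K\Phi$ with $L$, and this is the main obstacle. The difficulty is that $L$ has $\sup_\mu$ inside the logarithm, whereas $\sup_f\inf_{\mu,\nu}\Phi$ takes $\inf_\mu$ of $-\log\int e^f d\mu$. The saving observation is that $\inf_\mu(-\log\int e^fd\mu)=-\log\sup_\mu\int e^fd\mu$, because $\mu$ and $\nu$ decouple in $\Phi$ and $-\log$ is decreasing. So $\inf_K\Phi(f;\cdot)$ equals exactly the expression inside $L$, and hence $\sup_f\inf_K\Phi=L$. Two points need care. First, $\int e^f d\mu>0$ must hold for all $\mu\in\ba_+$ with $\mu(\Omega)=1$, which is true since $e^f\ge e^{-\|f\|_\infty}$. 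Second, $\xi^{**}_\Qcal$ must be finite on $\cconv^*(\Qcal)$, which follows from the boundedness of $\xi$: taking $f=0$ gives $\xi^{**}_\Qcal\le\sup\xi$, and the earlier inequality gives a lower bound. With these checks, the topological hypotheses of the minimax theorem can be verified as described, and the argument is complete.
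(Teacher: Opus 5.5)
Your proof is correct, but it organizes the minimax differently from the paper.

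\textbf{The paper's route.} It uses two nested minimax steps.
\begin{enumerate}
\item It first reduces the left side to $\sup_{f\in\Gamma}\inf_{\nu\in\cconv^*(\Qcal)}\bigl(\int f\,d\nu-\xi^{**}_\Qcal(\nu)\bigr)$, using Lemmas~\ref{L:260531} and~\ref{lem:offset-transform}.
\item It applies Sion's theorem on $\Gamma\times\cconv^*(\Qcal)$ to a function that is \emph{linear} in $f$.
\item It then identifies $\sup_{f\in\Gamma}\int f\,d\nu$ with $\min_{\mu\in\cconv^*(\Pcal)}H(\nu\mid\mu)$ via Lemma~\ref{lem:generalization-of-numeraire}. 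That lemma is itself a second Sion argument on $\Bb\times\cconv^*(\Pcal)$, combined with the normalization $f\mapsto f-c(f)$, where $c(f)=\log\sup_{\mu\in\cconv^*(\Pcal)}\int e^f\,d\mu$.
\end{enumerate}

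\textbf{Your route.} You perform that normalization once, up front. This turns the constrained problem over $\Gamma$ into an unconstrained one over $\Bb$ with the penalty $-c(f)$. You then run a single Sion argument on $\Bb\times\bigl(\cconv^*(\Pcal)\times\cconv^*(\Qcal)\bigr)$. This works because $\mu$ and $\nu$ decouple, so $\inf_\mu\bigl(-\log\int e^f\,d\mu\bigr)$ reproduces exactly $-c(f)$.

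\textbf{What each buys.} Yours is more economical: one minimax suffices, and attainment of the pair follows directly from lower semicontinuity of $(\mu,\nu)\mapsto H(\nu\mid\mu)-\xi^{**}_\Qcal(\nu)$ on the compact product. The cost is that you need joint convexity and lower semicontinuity on the product, and concavity rather than linearity in $f$. The paper's modular route yields Lemma~\ref{lem:generalization-of-numeraire} as a stand-alone simple-alternative duality for $\nu\in\ba_1$. The paper reuses it, for example to rewrite the REGROW value as $\min_\nu\bigl(\Grow(\nu)-\Grow^{**}_\Qcal(\nu)\bigr)$.

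\textbf{Two small points to tighten.}
\begin{enumerate}
\item The lower bound $\xi^{**}_\Qcal(\nu)\ge\inf_{\Q\in\Qcal}\xi(\Q)$ must hold on all of $\cconv^*(\Qcal)$. The inequality $\xi^{**}_\Qcal\ge\xi$ on $\Qcal$ alone does not give this. You need concavity and upper semicontinuity of $\xi^{**}_\Qcal$ to pass to the closure, or alternatively the direct observation that $\int f\,d\nu\ge\inf_{\Q\in\Qcal}\E_\Q[f]$ for $\nu\in\cconv^*(\Qcal)$.
\item The identity between the left side and $L$ needs both directions. One is that $f-c(f)\in\Gamma$ with the same value, since $\nu(\Omega)=1$. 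The other is that $c(g)\le 0$ for every $g\in\Gamma$. You only mention the first explicitly.
\end{enumerate}
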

Before proving this theorem we provide a remark and some corollaries and auxiliary lemmata.

\begin{remark}
    Since the offset function $\xi$ will typically be convex rather than concave, it cannot be used directly in the minimax argument. We therefore work with its concave biconjugate, given by \eqref{eq:260531.1}.
Let us write, for $\Q \in \Qcal$,  
\[
\Grow(\Q)=\Grow_b(\Q)=\sup_{X\in\Ecal_b}\int\log X\,d\Q.
\]
For $\Qcal = \{\Q\}$, this agrees both with the GROW value $\Grow$ and the bounded GROW value $\Grow_b$ in \eqref{eq:G} and \eqref{eq:Gb}, respectively.
It is the maximal e-power against the simple alternative $\nu$ and coincides with the $\GRO$ value in the terminology of \cite{grunwald2024safe}. 
Let us now assume $$\sup_{\Q\in\Qcal}\Grow(\Q)<\infty$$ and set $\xi =\Grow$. Then, the value in \eqref{eq:weak-star-offset-form} presents REGROW (for bounded e-variables):
\begin{equation*}
\sup_{X\in\Ecal_{b}}\inf_{\Q\in\Qcal}(\E_\Q[\log X]-\Grow(\Q)) =
\min_{\nu \in \cconv^*(\Qcal)}
\min_{\mu \in \cconv^*(\Pcal)}
\left(H(\nu\mid\mu)-\Grow^{**}_\Qcal(\nu)\right) = 
\min_{\nu \in \cconv^*(\Qcal)}  (\Grow(\nu) -  \Grow^{**}_\Qcal(\nu)),
\end{equation*}
where the final equality follows by Lemma~\ref{lem:generalization-of-numeraire}, presented below.
\end{remark}

\begin{corollary} \label{C:260526}
    For any $\Q \in \Mcal_1$,
    \[
    \min_{\P \in \Peff} H(\Q \mid \P) = \min_{\mu \in \cconv^*(\Pcal)} H(\Q \mid \mu).
    \]
\end{corollary}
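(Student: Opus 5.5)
The plan is to obtain both sides as the same quantity, namely the bounded GROW value for the singleton alternative $\Qcal=\{\Q\}$. Put
\[
V := \sup_{X\in\Ecal_b}\E_\Q[\log X].
\]

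\emph{Right-hand side.} Apply Theorem~\ref{T_strong_duality} with $\Qcal=\{\Q\}$. Then $\cconv^*(\Qcal)=\{\Q\}$, so the theorem gives
\[
V=\min_{\mu\in\cconv^*(\Pcal)}H(\Q\mid\mu),
\]
with the minimum attained, including when $V=\infty$.

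\emph{Left-hand side.} Here I would use the numeraire duality \eqref{eq:numeraire-duality} of \cite{larsson2025numeraire}, in which the infimum over $\Peff$ is attained by the reverse information projection $\P^*$. Combined with Lemma~\ref{lem:E-to-Eb}, this gives
\[
\min_{\P\in\Peff}H(\Q\mid\P)=\sup_{X\in\Ecal}\E_\Q[\log X]=V.
\]
Equating the two expressions for $V$ proves the corollary, including attainment on both sides.

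If one prefers not to cite \eqref{eq:numeraire-duality} directly, I would argue the inequalities by hand.

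\emph{Step 1: $\le$.} Weak duality \eqref{weak_duality_general} shows $V\le\inf_{\P\in\Peff}H(\Q\mid\P)$. This step is elementary: for $X\in\Ecal_b$ and $\P\in\Peff$ with $\Q\ll\P$, we have $\E_\P[X]\le 1$. Applying the second part of Lemma~\ref{lem:extended-entropy-sec} with $f=\log X$ gives $\E_\Q[\log X]\le H(\Q\mid\P)$.

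\emph{Step 2: $\ge$ via the Yosida--Hewitt part.} Take the minimizer $\mu^*\in\cconv^*(\Pcal)$ from the first step. By Proposition~\ref{prop:ba-countably-additive-reduction}, $H(\Q\mid\mu^*)=H(\Q\mid\mu^*_c)$. It then suffices to show $\mu^*_c\in\Peff$. For $X\in\Ecal_b$, the map $\mu\mapsto\int X\,d\mu$ is weak-$*$ continuous and affine, and it is at most $1$ on $\Pcal$. Hence it is at most $1$ on $\cconv^*(\Pcal)$. Since $X\ge 0$, we get $\int X\,d\mu^*_c\le\int X\,d\mu^*\le 1$. By \eqref{eq:260525}, this shows $\mu^*_c\in\Peff$. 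Therefore
\[
\inf_{\P\in\Peff}H(\Q\mid\P)\le H(\Q\mid\mu^*_c)=\min_{\mu\in\cconv^*(\Pcal)}H(\Q\mid\mu)=V,
\]
and the infimum over $\Peff$ is attained at $\mu^*_c$.

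This self-contained route is the one I would actually write, and it needs no appeal to \cite{larsson2025numeraire}. The only delicate point is the positivity argument for $\mu^*_c$, which is straightforward.
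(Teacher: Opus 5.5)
Your first route is exactly the paper's proof: the numeraire duality of \cite{larsson2025numeraire}, then Lemma~\ref{lem:E-to-Eb}, then Theorem~\ref{T_strong_duality} with $\Qcal=\{\Q\}$.

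The self-contained route you say you would actually write is also correct, and it is genuinely different. Instead of importing \eqref{eq:numeraire-duality}, you do two things:
\begin{itemize}
\item You bound $V\le\inf_{\P\in\Peff}H(\Q\mid\P)$ directly by the Donsker--Varadhan inequality.
\item You get the reverse inequality from the countably additive part $\mu^*_c$ of a weak-$*$ minimizer. It lies in $\Peff$, because $\{\mu:\int X\,d\mu\le1\}$ is weak-$*$ closed and convex and $\mu^*_p\ge0$. It also satisfies $H(\Q\mid\mu^*_c)=H(\Q\mid\mu^*)$ by Proposition~\ref{prop:ba-countably-additive-reduction}.
\end{itemize}

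This buys three things. The corollary no longer rests on the external paper. You get an explicit minimizer $\mu^*_c$ over $\Peff$. And, combined with Lemma~\ref{lem:E-to-Eb}, your argument derives \eqref{eq:numeraire-duality} from Theorem~\ref{T_strong_duality}, including attainment over $\Peff$, though not existence of the numeraire.

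That last point makes a claim in the introduction genuinely non-circular. The introduction says Theorem~\ref{T_strong_duality}, \eqref{eq:kl-equality-costar-peff} and \eqref{eq:e-eb-ebb-pointQ} together imply \eqref{eq:numeraire-duality}. With the paper's proof of the corollary, \eqref{eq:numeraire-duality} is actually an input to that implication. The paper's route, in return, is two lines and inherits from \cite{larsson2025numeraire} the identification of the $\Peff$-minimizer as the reverse information projection.

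Two minor points on your Step 1:
\begin{itemize}
\item The displayed chain \eqref{weak_duality_general} itself passes through \eqref{eq:numeraire-duality}. So it is your direct Donsker--Varadhan argument, not that citation, that keeps the route independent.
\item Lemma~\ref{lem:extended-entropy-sec} needs $\E_\P[X]>0$. The case $\E_\P[X]=0$ is trivial but should be stated: then $X=0$ $\P$-a.s., hence $\Q$-a.s., so $\E_\Q[\log X]=-\infty$. Likewise state the case $\Q\not\ll\P$, where $H(\Q\mid\P)=\infty$.
\end{itemize}
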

\begin{proof}
    We get the chain of equalities
    \[
    \min_{\P \in \Peff} H(\Q \mid \P) =
    \sup_{X \in \Ecal} \E_\Q[\log X] = 
    \sup_{X \in \Ecal_b} \E_\Q[\log X] = 
    \min_{\mu \in \cconv^*(\Pcal)} H(\Q \mid \mu),
    \]
    where the first equality follows from~\cite{larsson2025numeraire}, the second from Lemma~\ref{lem:E-to-Eb}, 
    and the last by taking $\Qcal=\{\Q\}$ in \eqref{eq_strong_duality_general}.
\end{proof}

There is also an interesting corollary regarding the existence of a \emph{nontrivial} test, which we define as any test whose worst-case power exceeds its worst-case level. To set the stage, a test is a measurable function $\phi$ whose range is $[0,1]$, its worst-case type-I error is $\sup_{\P \in \Pcal} \E_\P[\phi]$ and its worst-case power is $\inf_{\Q \in \Qcal} \E_\Q[\phi]$. A classical theorem by \cite{kraft1955some} (credited also to Le Cam) asserts that \emph{if $\Pcal \cup \Qcal$ are dominated by a common reference measure}, then a nontrivial test exists if and only if $\conv(\Pcal)$ and $\conv(\Qcal)$ are separated in the total variation distance. Recently,~\cite{larsson2026complete} proved that the aforementioned reference measure assumption can be dropped if $\conv(\Pcal)$ and $\conv(\Qcal)$ are replaced by $\cconv^*(\Pcal)$ and $\cconv^*(\Qcal)$, respectively. Our theorem above delivers the same corollary.

\begin{corollary}
    A nontrivial test for $\Pcal$ against $\Qcal$ exists if and only if $d_{\mathrm{TV}}(\cconv^*(\Pcal),\cconv^*(\Qcal))>0$.
\end{corollary}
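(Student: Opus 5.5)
The plan is to connect nontrivial tests to bounded e-variables with positive bounded GROW value, and then use Theorem~\ref{T_strong_duality} together with Lemma~\ref{lem:weak-star-jipr-zero-iff-equal}. Both sides of the claimed equivalence will be shown equivalent to $\Grow_b>0$. Since the minimum in \eqref{eq_strong_duality_general} is attained, $\Grow_b=0$ holds iff there is a pair $(\mu^*,\nu^*)$ with $H(\nu^*\mid\mu^*)=0$. By Lemma~\ref{lem:weak-star-jipr-zero-iff-equal} this happens iff $\cconv^*(\Pcal)\cap\cconv^*(\Qcal)\neq\emptyset$; here we use that both hulls lie in $\ba_1$, since $\ba_1$ is weak-$*$ closed and convex. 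Note that $\Grow_b\ge 0$ always, because $X=1\in\Ecal_b$.

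\emph{From a test to $\Grow_b>0$.} Take a test $\phi$ with $\alpha=\sup_{\Pcal}\E_\P[\phi]<\beta=\inf_{\Qcal}\E_\Q[\phi]$. Consider $X=1+\lambda(\phi-\alpha)$ for small $\lambda>0$. It is bounded and nonnegative, and satisfies $\E_\P[X]\le 1$, so $X\in\Ecal_b$. Using $\log(1+t)\ge t-t^2$ for $|t|\le 1/2$, we get $\E_\Q[\log X]\ge \lambda(\beta-\alpha)-\lambda^2$ uniformly over $\Qcal$. This is positive for small $\lambda$, so $\Grow_b>0$.

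\emph{From $\Grow_b>0$ to a test.} Conversely, if $\Grow_b>0$, pick $X\in\Ecal_b$ with $\inf_\Q\E_\Q[\log X]=c>0$, and let $M=\sup X$. By concavity, $\E_\Q[X]\ge$ something greater than $1$: precisely, Jensen gives $\E_\Q[X]\ge \exp(\E_\Q[\log X])\ge e^c>1$ for every $\Q\in\Qcal$. Then $\phi=X/M$ is a test with worst-case level at most $1/M$ and worst-case power at least $e^c/M$, so it is nontrivial.

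\emph{From the hulls to total variation.} It remains to show that disjointness of $\cconv^*(\Pcal)$ and $\cconv^*(\Qcal)$ is equivalent to $d_{\mathrm{TV}}>0$ between them. If the distance is positive, the sets are clearly disjoint. The converse is the main obstacle, since weak-$*$ compact sets need not be TV-separated merely because they are disjoint. I would handle it via Hahn--Banach in $(\ba,\sigma(\ba,\Bb))$. Both sets are convex and weak-$*$ compact (Banach--Alaoglu), and they are disjoint. Hence there is $f\in\Bb$ with $\sup_{\cconv^*(\Pcal)}\int f\,d\mu<\inf_{\cconv^*(\Qcal)}\int f\,d\nu$. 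Rescaling $f$ into $[0,1]$ turns this into a gap $\delta>0$ that persists after normalization. Then for any $\mu,\nu$ in the two hulls, $\|\nu-\mu\|_{TV}\ge \delta/\|f\|_\infty$, which gives the TV separation.

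Alternatively, this separating $f$, rescaled, is itself directly a nontrivial test, so the route through hull disjointness closes the loop either way.
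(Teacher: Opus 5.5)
Your proof is correct, and its skeleton matches the paper's. The chain is: nontrivial test $\iff$ $\Grow_b>0$ $\iff$ $\cconv^*(\Pcal)\cap\cconv^*(\Qcal)=\emptyset$ (by Theorem~\ref{T_strong_duality} with attainment, plus Lemma~\ref{lem:weak-star-jipr-zero-iff-equal}) $\iff$ positive TV distance. You handle two of these links differently.

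\textbf{First link.} The paper cites \cite{ramdas2024hypothesis} for the equivalence between a nontrivial test and $\Grow_b>0$. You prove it directly, using the e-variable $1+\lambda(\phi-\alpha)$ in one direction and Jensen in the other, which makes your argument self-contained.

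\textbf{Last link.} The paper notes that $(\mu,\nu)\mapsto\|\mu-\nu\|_{\mathrm{TV}}$ is a supremum of weak-$*$ continuous functions, hence lower semicontinuous. So the distance between the two weak-$*$ compact hulls is attained, and it is positive if and only if they are disjoint. You instead strictly separate the disjoint compact convex sets by some $f\in\Bb$, the dual of $(\ba,\sigma(\ba,\Bb))$. This is slightly longer but quantitative.

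As you observe, that separating $f$, rescaled into $[0,1]$, is already a nontrivial test. The converse is easy: a nontrivial test $\phi\in\Bb$ keeps $\int\phi\,d\mu\le\alpha$ on $\cconv^*(\Pcal)$ and $\int\phi\,d\nu\ge\beta$ on $\cconv^*(\Qcal)$, by linearity and weak-$*$ continuity, so $\|\nu-\mu\|_{\mathrm{TV}}\ge\beta-\alpha$. Together these give a pure convex-separation proof of the corollary that needs no entropy duality at all.

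The paper's route is chosen to show the corollary as a consequence of Theorem~\ref{T_strong_duality}, and your main argument does that too.
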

\begin{proof}
 The sets $\cconv^*(\Pcal)$ and $\cconv^*(\Qcal)$ are weak-$*$ compact, and the map $(\mu, \nu)\mapsto \|\mu-\nu\|_{\mathrm{TV}}$ is weak-$*$ lower semicontinuous. Hence the total variation distance between the two sets is attained. Therefore this distance is positive if and only if $\cconv^*(\Pcal)$ and $\cconv^*(\Qcal)$ are disjoint.
 
 By \cite{ramdas2024hypothesis}, a nontrivial test exists if and only if there exists $X\in\Ecal_b$ 
 such that $\inf_{\Q \in \Qcal} \E_\Q[\log X] > 0$.
 Equivalently, the left-hand side of \eqref{eq_strong_duality_general} is strictly positive, and thus so is the right-hand side. Since relative entropy 
 between two measures in $\ba_1$ equals zero if and only they are equal by Lemma~\ref{lem:weak-star-jipr-zero-iff-equal}, this is equivalent to $\cconv^*(\Pcal)$ and $\cconv^*(\Qcal)$ being disjoint, and this concludes the proof.
\end{proof}

Before proving Theorem~\ref{thm:weak-star-offset-duality}, we establish a few facts concerning the concave biconjugate.

\begin{lemma}\label{lem:offset-transform}
For $\mathcal A \subseteq \ba_+$ nonempty, suppose $\xi:\mathcal A\to\R$ is bounded and 
define $\xi^{**}_\mathcal A$ as in \eqref{eq:260531.1}.
Then $\xi^{**}_{\mathcal A}$ is weak-$*$-upper semicontinuous and concave on $\cconv^*(\mathcal A)$, and  for every $g\in\Bb$, \begin{equation}\label{eq:offset-transform-inf-preserving} \inf_{\nu\in\mathcal A}\left(\int g\,d\nu-\xi(\nu)\right) = \inf_{\nu\in \cconv^*(\mathcal A)} \left( \int g\,d\nu - \xi^{**}_{\mathcal A}(\nu) \right). \end{equation}
\end{lemma}

\begin{proof}
For $g\in\Bb$, define the concave conjugate of $\xi$ by
\[
a_{\xi}(g)=\inf_{\nu\in\mathcal A}\left(\int g\,d\nu-\xi(\nu)\right)
\]
and set
\[
\psi_g(\nu)=\int g\,d\nu-a_{\xi}(g),
\qquad \nu\in\cconv^*(\mathcal A).
\]
Then $\psi_g$ is affine and weak-$*$ continuous on
$\cconv^*(\mathcal A)$ and we have
$
\xi^{**}_{\mathcal A}=\inf_{g\in\Bb}\psi_g$.
The pointwise infimum of continuous affine functions is upper semicontinuous
and concave. Hence $\xi^{**}_{\mathcal A}$ is
weak-$*$ upper semicontinuous and concave on $\cconv^*(\mathcal A)$.

It remains to prove \eqref{eq:offset-transform-inf-preserving}. Fix $g\in\Bb$. From the definition of $\xi^{**}_{\mathcal A}$,
\begin{equation} \label{eq:260530}
a_{\xi}(g) \leq 
\inf_{\nu\in\cconv^*(\mathcal A)}
\left(
\int g\,d\nu-\xi^{**}_{\mathcal A}(\nu)
\right).
\end{equation}
For the reverse inequality, first note that for every $\nu_0\in\mathcal A$ and every $h\in\Bb$,
\[
a_\xi(h)
=
\inf_{\nu\in\mathcal A}\left(\int h\,d\nu-\xi(\nu)\right)
\le
\int h\,d\nu_0-\xi(\nu_0).
\]
Thus
\[
\int h\,d\nu_0-a_\xi(h)\ge \xi(\nu_0).
\]
Taking the infimum over $h\in\Bb$ gives $
\xi^{**}_{\mathcal A}(\nu_0)\ge \xi(\nu_0)$.
Consequently,
\begin{align*}
\inf_{\nu\in\cconv^*(\mathcal A)}
\left(
\int g\,d\nu-\xi^{**}_{\mathcal A}(\nu)
\right)
\le
\inf_{\nu\in\mathcal A}
\left(
\int g\,d\nu-\xi^{**}_{\mathcal A}(\nu)
\right)  \le
\inf_{\nu\in\mathcal A}
\left(
\int g\,d\nu-\xi(\nu)
\right) =
a_\xi(g).
\end{align*}
Combining this with \eqref{eq:260530} proves \eqref{eq:offset-transform-inf-preserving}.
\end{proof}

The following short lemma will be useful below.
\begin{lemma} \label{L:260531}
Let $\mathcal A \subseteq\ba_1$ be nonempty. Let $\xi:\mathcal A\to\R$ be a bounded function, 
and define
\begin{align}
\label{eq:Gamma}
\Gamma
=
\left\{
f\in\Bb:
\sup_{\P\in\Pcal}\E_\P[e^f]\le 1
\right\}.
\end{align}
Then $\Gamma$ is convex and
\[
\sup_{X\in\Ecal_{b}}\inf_{\nu\in\mathcal A}(\E_\nu[\log X]-\xi(\nu))
= \sup_{f\in\Gamma}\inf_{\nu\in\mathcal A}(\E_\nu[f]-\xi(\nu)).
\]
\end{lemma}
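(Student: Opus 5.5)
Convexity of $\Gamma$ comes first. For $f,g\in\Gamma$ and $\lambda\in[0,1]$, the convex combination $\lambda f+(1-\lambda)g$ lies in $\Bb$. By convexity of the exponential (or by H\"older), $e^{\lambda f+(1-\lambda)g}\le \lambda e^f+(1-\lambda)e^g$ pointwise. Integrating against any $\P\in\Pcal$ gives $\E_\P[e^{\lambda f+(1-\lambda) g}]\le 1$, so the combination lies in $\Gamma$.

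For the equality I will prove two inequalities.

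\emph{The right side is at most the left side.} For $f\in\Gamma$, put $X=e^f$. Then $X$ is bounded, nonnegative and satisfies $\E_\P[X]\le1$ for all $\P\in\Pcal$, so $X\in\Ecal_b$. Moreover $\log X=f$ is bounded, so $\E_\nu[\log X]=\E_\nu[f]$ for every $\nu\in\ba_1$. Hence the objective values agree, and the right side is dominated by the left.

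\emph{The left side is at most the right side.} The obstacle is that $X\in\Ecal_b$ may vanish or come arbitrarily close to zero, so $\log X$ need not be in $\Bb$. I will truncate from below by mixing with the constant e-variable: for $\varepsilon\in(0,1)$ set
\[
X_\varepsilon=(1-\varepsilon)X+\varepsilon .
\]
This is still an e-variable, by convexity of $\Ecal_b$ and since $1\in\Ecal_b$. It satisfies $\varepsilon\le X_\varepsilon\le \sup X+1$, so $f_\varepsilon=\log X_\varepsilon\in\Bb$ and $f_\varepsilon\in\Gamma$.

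Next compare objectives. Pointwise $X_\varepsilon\ge(1-\varepsilon)X$, so $\log X_\varepsilon\ge\log(1-\varepsilon)+\log X$. For the finitely additive integral of $\log X$, which by convention is defined via the positive and negative parts, I need monotonicity and additivity of a constant:
\[
\E_\nu[f_\varepsilon]\ge \log(1-\varepsilon)+\E_\nu[\log X].
\]
This is clear when $\E_\nu[(\log X)^-]=\infty$, since then the right side is $-\infty$. Otherwise $\log X$ is bounded above, and the inequality follows from monotonicity of the extended integral of the nonnegative parts together with the sup-over-truncations definition. I expect checking this monotonicity under the finitely additive conventions to be the only delicate step.

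Taking the infimum over $\nu\in\mathcal A$, with $\xi$ bounded so the subtraction is harmless, gives
\[
\inf_{\nu}(\E_\nu[f_\varepsilon]-\xi(\nu))\ge \log(1-\varepsilon)+\inf_\nu(\E_\nu[\log X]-\xi(\nu)).
\]
Letting $\varepsilon\to0$ and then taking the supremum over $X$ yields the left side $\le$ the right side. If the left side is $-\infty$ there is nothing to prove, and if it is $+\infty$ the argument passes through the supremum unchanged.
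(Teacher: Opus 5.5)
Your proof is correct and follows the paper's argument essentially step for step: convexity of $\Gamma$ (you use pointwise convexity of the exponential where the paper cites H\"older), the easy direction via $f\mapsto e^f$, and the mixture $X_\varepsilon=\varepsilon+(1-\varepsilon)X$ with $\log X_\varepsilon\ge\log(1-\varepsilon)+\log X$ followed by $\varepsilon\downarrow0$. The monotonicity step you flag as delicate, which the paper leaves implicit, does go through: compare the bounded function $\log X_\varepsilon-\log(1-\varepsilon)$ with the bounded truncations $(\log X)\vee(-n)$ for large $n$.
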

\begin{proof}
We first observe that $\Gamma$ is convex due to H\"older's inequality. The inequality ``$\ge$'' is clear. For the reverse inequality, fix $X\in\Ecal_b$ and $\varepsilon\in(0,1)$, and set
$
X_\varepsilon=\varepsilon+(1-\varepsilon)X \in \Ecal_b$.
Then $f_\varepsilon = \log X_\varepsilon\in\Gamma$. Since $X_\varepsilon \geq (1-\varepsilon)X$, we have $f_\varepsilon \geq \log (1-\varepsilon) + \log X$. Therefore
\[
\inf_{\nu \in \mathcal A}\left(\int f_\varepsilon d \nu - \xi(\nu)\right) \geq \inf_{\nu \in \mathcal A}\left(\int \log X d \nu - \xi(\nu)\right) + \log(1-\varepsilon).
\]
Sending $\varepsilon$ to zero and taking the supremum over $X \in \Ecal_b$ proves the reverse inequality.
\end{proof}

In preparation for the proof of Theorem~\ref{thm:weak-star-offset-duality}, we also provide a simple-alternative duality.
The main differences between this result and~\eqref{eq:numeraire-duality} are that  $\nu$ is allowed to be in $\ba_1$ (instead of just $\Mcal_1$), and the right-hand side minimizes over $\cconv^*(\Pcal)$ instead of $\Peff$.

\begin{lemma}\label{lem:generalization-of-numeraire}
Let $\Gamma$ be as in \eqref{eq:Gamma}.
For $\nu\in\ba_1$, 
\begin{equation*}
\sup_{X\in \Ecal_b}\int \log X\,d\nu
=
\sup_{f\in \Gamma}\int f\,d\nu
=\min_{\mu\in\cconv^*(\Pcal)} H(\nu\mid\mu).
\end{equation*}
\end{lemma}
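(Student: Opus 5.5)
The first equality is Lemma~\ref{L:260531} with $\mathcal A=\{\nu\}$ and $\xi=0$; the proof there uses only that $\nu$ is a finitely additive probability, so it carries over verbatim. The substance is therefore the second equality, which I would prove as a separation/minimax argument in the dual pair $(\Bb,\ba)$. Weak duality is easy. For $f\in\Gamma$ and $\mu\in\conv(\Pcal)$ we have $\int e^f\,d\mu\le1$. The map $\mu\mapsto\int e^f d\mu$ is weak-$*$ continuous because $e^f\in\Bb$, so the bound extends to all $\mu\in\cconv^*(\Pcal)$. The Donsker--Varadhan bound of Lemma~\ref{lem:extended-entropy-sec} then gives $\int f\,d\nu\le H(\nu\mid\mu)+\log\int e^f d\mu\le H(\nu\mid\mu)$. (If $\int e^f d\mu=0$ the inequality is trivial in the right direction, or one can perturb $f$.) Hence $\sup_\Gamma\le\inf_{\cconv^*(\Pcal)}H(\nu\mid\cdot)$.

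Attainment of the minimum comes from compactness. $\cconv^*(\Pcal)$ is weak-$*$ closed and lies in the unit ball of $\ba$, so it is weak-$*$ compact by Banach--Alaoglu. The map $\mu\mapsto H(\nu\mid\mu)$ is a supremum over $f\in\Bb$ of the weak-$*$ continuous functions $\int f d\nu-\log\int e^f d\mu$ (Lemma~\ref{lem:extended-entropy-sec}), with values in $(-\infty,\infty]$. It is therefore weak-$*$ lower semicontinuous, so the minimum exists.

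For the reverse inequality I would apply a minimax theorem (Sion/Ky Fan) to
\[
L(f,\mu)=\int f\,d\nu-\log\int e^f\,d\mu,\qquad f\in\Bb,\ \mu\in\cconv^*(\Pcal).
\]
The function $L$ is concave in $f$ (log-sum-exp is convex) and convex in $\mu$ ($-\log$ of an affine function). It is weak-$*$ lsc in $\mu$ on a compact convex set. Sion then gives
\[
\min_\mu H(\nu\mid\mu)=\min_\mu\sup_f L(f,\mu)=\sup_f\inf_\mu L(f,\mu).
\]
Now fix $f$ and set $c=\sup_{\mu\in\cconv^*(\Pcal)}\log\int e^f d\mu=\sup_{\P\in\Pcal}\log\E_\P[e^f]$, which is finite since $f$ is bounded. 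Then $\inf_\mu L(f,\mu)=\int (f-c)\,d\nu$, and $f-c\in\Gamma$. Hence $\sup_f\inf_\mu L\le\sup_{g\in\Gamma}\int g\,d\nu$, closing the chain.

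The main obstacle is the regularity needed for Sion. On the $\mu$-side, $L$ may take the value $+\infty$ where $\int e^f d\mu=0$. Since $e^f\ge e^{-\|f\|_\infty}>0$ and $\mu(\Omega)=1$, this never happens, so $L$ is finite and weak-$*$ continuous in $\mu$. On the $f$-side, Sion needs upper semicontinuity in $f$ on a convex set. I would use the norm topology on $\Bb$, in which $L$ is continuous in $f$; Sion requires compactness on only one side, and we have it on the $\mu$-side. The remaining care is to confirm that the version of Sion used allows a noncompact, merely convex domain for the maximizing variable, which it does.
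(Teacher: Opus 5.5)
Your proposal is correct and follows essentially the paper's own argument. You reduce to the second equality via Lemma~\ref{L:260531}, apply Sion's theorem to $\int f\,d\nu-\log\int e^f\,d\mu$ on $\Bb\times\cconv^*(\Pcal)$ (norm topology on $\Bb$, weak-$*$ compactness on the $\mu$-side), and normalize by $c(f)$ so that $f-c(f)\in\Gamma$. The only differences are cosmetic: you prove the inequality $\sup_{\Gamma}\le\min H$ directly from the Donsker--Varadhan bound rather than through the post-Sion identity, and you justify attainment of the minimum explicitly via weak-$*$ lower semicontinuity.
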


\begin{proof}
By Lemma~\ref{L:260531}, applied with $\xi\equiv 0$, it suffices to argue
\begin{equation}\label{eq:single-measure-duality2}
\sup_{f\in\Gamma}\int f\,d\nu=\min_{\mu\in\cconv^*(\Pcal)} H(\nu\mid\mu).
\end{equation}

To make headway, for $\mu\in\cconv^*(\Pcal)$ and
$f\in\Bb$, define
\[
\Phi(f,\mu)
=
\int f\,d\nu-\log\int e^f\,d\mu .
\]
For each fixed $\mu\in\cconv^*(\Pcal)$, the map
$f\mapsto \Phi(f,\mu)$ is concave on  $\Bb$. Indeed,
$f\mapsto \int f\,d\nu$ is affine, while
$f\mapsto \log\int e^f\,d\mu$ is convex by H\"older's inequality. Moreover, this map is continuous
with respect to the supremum norm. For each fixed $f\in\Bb$, the map
$\mu\mapsto \Phi(f,\mu)$ is convex and continuous on
$\cconv^*(\Pcal)$, since $\mu\mapsto\int e^f\,d\mu$ is affine and
continuous, takes values in $(0,\infty)$, and
$x\mapsto -\log x$ is continuous and convex on $(0,\infty)$.

Since $\cconv^*(\Pcal)$ is compact by the Banach--Alaoglu theorem and convex, and $\Bb$ is convex,
Sion's minimax theorem applies. Together with the variational formula for
relative entropy (Lemma~\ref{lem:extended-entropy-sec}), this yields
\begin{align}
\min_{\mu\in\cconv^*(\Pcal)} H(\nu\mid\mu)
&=
\min_{\mu\in\cconv^*(\Pcal)}
\sup_{f\in\Bb}\Phi(f,\mu) =
\sup_{f\in\Bb}
\min_{\mu\in\cconv^*(\Pcal)}\Phi(f,\mu) \notag\\
&=
\sup_{f\in\Bb}
\left(
\int f\,d\nu
-
\sup_{\mu\in\cconv^*(\Pcal)}
\log\int e^f\,d\mu
\right).
\label{eq:single-measure-after-sion}
\end{align}

For $f\in\Bb$ set $c(f)=\sup_{\mu\in\cconv^*(\Pcal)}\log\int e^f\,d\mu$. Then we have $f-c(f)\in\Gamma$, yielding
\[
\int f\,d\nu-c(f)=\int (f-c(f))\,d\nu\le \sup_{g\in\Gamma}\int g\, d\nu.
\]
From 
\eqref{eq:single-measure-after-sion} we hence get
\begin{equation}\label{eq:single-measure-upper}
\min_{\mu\in\cconv^*(\Pcal)} H(\nu\mid\mu)\le \sup_{g\in\Gamma}\int g\,d\nu. 
\end{equation}
Conversely, suppose that $g\in\Gamma$. We know that $c(g)\le 0$. Hence  \eqref{eq:single-measure-after-sion} yields
\[
\int g\,d\nu\le \int g\,d\nu-c(g)\le \sup_{f\in\Bb}\left(\int f\, d\nu-c(f)\right)=\min_{ \mu\in\cconv^*(\Pcal)}H(\nu\mid\mu).
\]
Together with \eqref{eq:single-measure-upper}, this yields \eqref{eq:single-measure-duality2}, concluding the proof.
\end{proof}

\begin{proof}[Proof of Theorem~\ref{thm:weak-star-offset-duality}]
Let $\Gamma$ be as in \eqref{eq:Gamma}.
We first note 
\[
\sup_{X\in\Ecal_{b}}\inf_{\Q\in\Qcal}(\E_\Q[\log X]-\xi(\Q))
= \sup_{f\in\Gamma}\inf_{\Q\in\Qcal}(\E_\Q[f]-\xi(\Q))
=
 \sup_{f\in\Gamma} \inf_{\nu\in \cconv^*(\Qcal)} \left( \int f\,d\nu - \xi^{**}_{\Qcal}(\nu) \right)
\]
by Lemmata~\ref{L:260531} and \ref{lem:offset-transform}.
We now intend to apply Sion's minimax theorem. To this end, recall that $\Gamma$ is convex  by Lemma~\ref{L:260531} and $\cconv^*(\Qcal)$ is compact by the Banach--Alaoglu theorem, respectively. 
We next define 
\[
L(f,\nu)=\int fd\nu-\xi^{**}_{\Qcal}(\nu).
\]
By Lemma~\ref{lem:offset-transform}, $\nu \mapsto L(f,\nu)$ is lower semicontinuous and convex on $\cconv^*(\Qcal)$. Moreover, $f \mapsto L(f,\nu)$ is linear and continuous. 
Further, $\xi^{**}_{\Qcal}(\nu) \in [\inf_{\Q \in \Qcal} \xi(\Q), \sup_{\Q \in \Qcal} \xi(\Q)]$, and thus $\xi^{**}_\Qcal$ is real-valued on $\cconv^*(\Qcal)$.
Therefore, Sion's minimax theorem applies and we obtain
\begin{align*}
\sup_{f\in\Gamma} \inf_{\nu\in \cconv^*(\Qcal)} \left( \int f\,d\nu - \xi^{**}_{\Qcal}(\nu) \right)
= \inf_{\nu\in \cconv^*(\Qcal)} \sup_{f\in\Gamma}  \left( \int f\,d\nu - \xi^{**}_{\Qcal}(\nu) \right).
\end{align*}
By Lemma~\ref{lem:generalization-of-numeraire}, we get \eqref{eq:weak-star-offset-form}.
Finally, attainment follows from lower semicontinuity and compactness of 
$\cconv^*(\Pcal) \times \cconv^*(\Qcal)$.
\end{proof}

To provide further intuition for the concave biconjugate, we discuss the simpler case of a finite full simplex.

\begin{example}
Suppose that $\Omega=\{1,\ldots,n\}$, and let $\P\in\Delta_n$ have full support, that is,
$
p_i=\P(\{i\})>0$ for every $i=1,\ldots,n$.
Let $\Pcal=\{\P\}$ and $\Qcal=\Delta_n$. For $\Q\in\Qcal$, we set again
\[
\Grow(\Q)=H(\Q\mid\P).
\]
We first compute the concave biconjugate of $\Grow$ on $\Delta_n$. Write $q_i=\Q(\{i\})$. Then \[ \Grow(\Q) = \sum_{i=1}^n q_i\log\frac{q_i}{p_i}. \]
For $g=(g_1,\ldots,g_n)\in\mathbb R^n$, the concave conjugate of $\Grow$ is \[ a_{\Grow}(g) = \inf_{q\in\Delta_n} \left( \sum_{i=1}^n q_i(g_i+\log p_i) - \sum_{i=1}^n q_i\log q_i \right). \]
Since $-\sum_i q_i\log q_i\ge0$, this expression is bounded below by $\min_i(g_i+\log p_i)$, and this lower bound is attained by choosing $q$ to be a Dirac mass at a minimizer of $g_i+\log p_i$. Hence \[ a_{\Grow}(g)=\min_{1\le i\le n}(g_i+\log p_i). \]
Therefore \begin{align*} \Grow^{**}_{\Delta_n}(\Q) &= \inf_{g\in\mathbb R^n} \left( \sum_{i=1}^n q_i g_i - \min_{1\le i\le n}(g_i+\log p_i) \right) = -\sum_{i=1}^n q_i\log p_i + \inf_{h\in\mathbb R^n} \left( \sum_{i=1}^n q_i h_i-\min_i h_i \right), \end{align*} where $h_i=g_i+\log p_i$. The last infimum is zero, since $\sum_i q_i h_i\ge\min_i h_i$ and equality is attained by taking $h_1=\cdots=h_n$. Thus 
\[ \Grow^{**}_{\Delta_n}(\Q)= -\sum_{i=1}^n \Q(\{i\})\log p_i, \qquad \Q\in\Delta_n. \]

Consequently, 
\[ \Grow(\Q)-\Grow^{**}_{\Delta_n}(\Q) = \sum_{i=1}^n q_i\log q_i. \]
This quantity is minimized over $\Delta_n$ at the uniform distribution $\mathsf U$, and the minimum value is $-\log n$. Thus the offset relative entropy side of \eqref{eq:weak-star-offset-form} has value $-\log n$.

On the e-variable side, let $ X={d\mathsf U}/{d\P}$.  Then $X$ is an e-variable for $\P$, and $X(i)=1/(np_i)$. For $\Q\in\Delta_n$, \begin{align*} \E_\Q[\log X]-\Grow(\Q) &= \sum_{i=1}^n q_i\log\frac{1}{np_i} - \sum_{i=1}^n q_i\log\frac{q_i}{p_i} = -\log n-\sum_{i=1}^n q_i\log q_i. \end{align*} Taking the infimum over $\Q\in\Delta_n$ gives $-\log n$, attained at any vertex of $\Delta_n$. Hence the REGROW value is $-\log n$, and the REGROW e-variable is $d\mathsf U/d\P$. Notice that the minimizing $\Q$ on the e-variable side may be any Dirac mass, whereas the minimizing $\Q$ on the offset relative entropy side is the uniform distribution $\mathsf U$.
\end{example}

\section{Strong duality for unbounded GROW }\label{sec:grow}

We have been unable to derive an ``assumption-free'' theorem for unbounded GROW. The absence of such a theorem forces us to look for special cases where strong duality still holds with unbounded e-variables. While there is no hope of being complete, we present five important settings where strong duality holds. We present these below in the following order, one case per subsection. First, we handle the case of a singleton $\Pcal= \{\P\}$ together with convex $\Qcal$ such that $H(\Q \mid \P) < \infty$ for all $\Q \in\Qcal$. We then extend this to the case where a joint information projection (defined later) exists. Third, we handle the case of convex and setwise compact $\Qcal$. Fourth, we handle the case of weakly compact $\Pcal$ and $\Qcal$ on a Polish space $\Omega$. Finally, we handle the case of finite $\Omega$. In all cases, the final conclusions avoid finitely additive measures. In all relevant cases, we show using counterexamples that the conclusions of the theorems do not hold without restrictions, justifying our inability to find an assumption-free statement.

\subsection{When $\Pcal = \{\P\}$ is a singleton}
Fix a convex family $\Qcal$ of probability measures and a simple null $\Pcal = \{\P\}$ such that $$\inf_{\Q\in \Qcal} H(\Q\mid \P) < \infty.$$  
Recall that then Csisz\'ar's I-projection (IPr), provided it exists, is the probability measure $\Q^{\mathrm{IPr}}\in \Qcal$ that achieves $\inf_{\Q\in\Qcal} H(\Q\mid \P)$ \citep{csiszar1975source}, so that
\[
H(\Q^{\mathrm{IPr}} \mid \P) = \inf_{\Q\in\Qcal} H(\Q\mid \P).
\]
Csisz\'ar proved that the I-projection of $\P$ onto $\Qcal$ exists if $\Qcal$ is convex and TV-closed and $\inf_{\Q\in \Qcal} H(\Q\mid \P) < \infty$, in which case clearly $H(\Q^{\mathrm{IPr}}\mid \P) < \infty$.
Since the IPr does not always exist, the following generalization was conceived.

Following \citet[Theorem~8]{topsoe1979information} and \cite{csiszar1984sanov}, the generalized I-projection (GIPr) $\Q^{\mathrm{GIPr}}$ of $\P\in\Mone$ onto $\Qcal\subseteq\Mone$ is the unique probability measure that satisfies the following Pythagorean inequality for any $\Q \in \Qcal$:
\begin{equation}\label{eq:pythagorean}
 H(\Q\mid \Q^{\mathrm{GIPr}}) + 
\inf_{\RR\in\Qcal}H(\RR\mid \P) \leq H(\Q\mid \P).
\end{equation}
The GIPr $\Q^{\mathrm{GIPr}}$ was shown by the above authors to always exist for any convex $\Qcal$ with $\inf_{\Q\in \Qcal} H(\Q\mid \P) < \infty$. However, 
 $\Q^{\mathrm{GIPr}}\notin \Qcal$ in general, but \eqref{eq:pythagorean} implies that it does lie in the I-closure of $\Qcal$, given by
 \[
    \overline\Qcal^I = \left\{\RR \in \Mone: \inf_{\Q \in \Qcal} H(\Q \mid \RR)=0\right\}. 
\]
Further, one has
\begin{equation}\label{eq:existence-LR}
H(\Q^{\mathrm{GIPr}}\mid \P)\le \inf_{\Q\in\Qcal}H(\Q\mid \P), 
\end{equation}
where strict inequality can occur as shown by \citep[Example~3.2]{csiszar1984sanov}. He also shows that one cannot obtain the generalized I-projection as a minimizer over some closure of $\Qcal$. 
Thanks to \eqref{eq:existence-LR}, and our standing assumption that $\inf_{\Q\in \Qcal} H(\Q\mid \P) < \infty$, we have $H(\Q^{\mathrm{GIPr}}\mid \P) < \infty$, and thus $\Q^\mathrm{GIPr} \ll \P$, meaning that $d\Q^\mathrm{GIPr}/d\P$ is well defined, and it is clearly an e-variable for $\P$. We now show that under some conditions,  $d\Q^\mathrm{GIPr}/d\P$ is the log-optimal (unbounded) e-variable.

\begin{assumption}\label{ass:singleton-P}
    $\inf_{\RR\in \Qcal} H(\RR\mid \P) < \infty$ and there exists a random variable $X^*$ that is a $\P$-version of $d\Q^\mathrm{GIPr}/d\P$, 
    such that
$\E_\Q[\log X^*] \geq \inf_{\RR\in\Qcal}H(\RR\mid \P)$  for each $\Q \in \Qcal$.
\end{assumption}

The above assumption can sometimes be checked by verifying the stronger condition that for every $\Q \in \Qcal$, \(
H(\Q \mid \Q^{\mathrm{GIPr}}) < \infty
\) holds, which in turn is implied by $H(\Q \mid \P) < \infty$. (Indeed, one can rearrange the Pythagorean inequality~\eqref{eq:pythagorean} even when $H(\Q \mid \P)=\infty$, and note that these stronger conditions imply $\Q \ll \Q^{\mathrm{GIPr}} \ll \P$.)

\begin{theorem}[Singleton null strong duality]\label{thm:singleton-P}
For singleton $\Pcal = \{\P\}$ and convex $\Qcal$ such that Assumption~\ref{ass:singleton-P} holds, the corresponding e-variable $X^*$ achieves the supremum on the left-hand side of the following strong duality:
\begin{equation*}
\Grow
= \inf_{\Q \in \Qcal} \E_\Q[\log X^*] = \inf_{\Q\in\Qcal}H(\Q\mid \P).
\end{equation*}
Further, if the GIPr is an IPr (i.e., it lies in $\Qcal$), then $X^*$ is the $\P$-a.s.\ unique GROW e-variable.
\end{theorem}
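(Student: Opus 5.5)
The proof will sandwich $\Grow$ between two bounds that meet. For the upper bound I will use the weak duality chain \eqref{weak_duality_general} with $\Pcal=\{\P\}$. Since $\Qcal$ is convex, $\conv(\Qcal)=\Qcal$ and $\conv(\Pcal)=\{\P\}$, so the chain gives
\[
\Grow\le \inf_{\Q\in\Qcal}H(\Q\mid\P).
\]
For the lower bound I will check that $X^*$ is an e-variable. Since $X^*$ is a $\P$-version of $d\Q^{\mathrm{GIPr}}/d\P$, we have $\E_\P[X^*]=\Q^{\mathrm{GIPr}}(\Omega)\le 1$, so $X^*\in\Ecal$. Assumption~\ref{ass:singleton-P} then gives $\inf_{\Q\in\Qcal}\E_\Q[\log X^*]\ge\inf_{\RR\in\Qcal}H(\RR\mid\P)$. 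Combining the two bounds,
\[
\inf_{\Q\in\Qcal}H(\Q\mid\P)\le\inf_{\Q}\E_\Q[\log X^*]\le\Grow\le\inf_{\Q\in\Qcal}H(\Q\mid\P),
\]
so all three quantities are equal and $X^*$ attains the supremum. This part is routine.

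For uniqueness, I assume $\Q^*:=\Q^{\mathrm{GIPr}}\in\Qcal$, which makes $\Q^*$ the I-projection with $H(\Q^*\mid\P)=\Grow<\infty$. Let $Y\in\Ecal$ be any GROW e-variable, so $\inf_{\Q\in\Qcal}\E_\Q[\log Y]=\Grow$. In particular $\E_{\Q^*}[\log Y]\ge\Grow=\E_{\Q^*}[\log X^*]$.

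I plan to conclude from the numeraire/Gibbs inequality for the simple alternative $\Q^*$. Since $\Q^*\ll\P$ and $X^*=d\Q^*/d\P>0$ $\Q^*$-a.s., we have
\[
\E_{\Q^*}\bigl[\log(Y/X^*)\bigr]\le\log\E_{\Q^*}[Y/X^*]=\log\E_\P\bigl[Y\1_{\{X^*>0\}}\bigr]\le 0,
\]
with equality in Jensen only if $Y/X^*$ is $\Q^*$-a.s.\ constant, and that constant must be $1$. Hence $Y=X^*$ $\Q^*$-a.s., and the same equality forces $\E_\P\bigl[Y\1_{\{X^*=0\}}\bigr]=0$.

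It remains to pass from equality $\Q^*$-a.s.\ to equality $\P$-a.s. The set $\{X^*>0\}$ has full $\Q^*$-measure, and on it $\P$ and $\Q^*$ are equivalent, so $Y=X^*$ $\P$-a.s.\ there. On $\{X^*=0\}$ we have $Y=0$ $\P$-a.s., because $\E_\P[Y\1_{\{X^*=0\}}]=0$ and $Y\ge 0$. So $Y=X^*$ $\P$-a.s.

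The step I expect to be the main obstacle is making Jensen rigorous when $\E_{\Q^*}[\log Y]$ might be $+\infty$ or the logs might not be integrable. I would handle this using $\E_{\Q^*}[\log X^*]=H(\Q^*\mid\P)<\infty$, so $\log X^*\in L^1(\Q^*)$, together with $\E_{\Q^*}[\log Y]\ge\Grow>-\infty$. These make $\log(Y/X^*)$ quasi-integrable with a well-defined expectation, and the strict-concavity equality case of Jensen then applies.
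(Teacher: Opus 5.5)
Your proof is correct and follows the paper's route: the strong-duality sandwich is exactly the paper's chain (Assumption~\ref{ass:singleton-P}, $X^*\in\Ecal$, weak duality \eqref{weak_duality_general}). For uniqueness, the paper invokes Proposition~\ref{prop:version-of-LR}, whose proof is the same Gibbs-type argument you give inline: there one sets $d\Q'/d\P=Y$, and the Donsker--Varadhan bound plus the chain rule giving $H(\Q^*\mid\Q')=0$ correspond to your Jensen equality case for $Y/X^*$ under $\Q^*$, so your version differs only in working directly with $\P$ rather than passing through $\Peff$.
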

\begin{proof}
We have
\[
\inf_{\Q\in\Qcal}H(\Q\mid \P) \leq 
\inf_{\Q \in \Qcal} \E_\Q[\log X^*] \leq 
\sup_{X \in \Ecal} \inf_{\Q \in \Qcal} \E_\Q[\log X]  \leq  \inf_{\Q\in\Qcal}H(\Q\mid \P)< \infty ,
\]
where the first inequality follows from Assumption~\ref{ass:singleton-P},  the second one holds by noting $X^* \in \Ecal$, the third by weak duality~\eqref{weak_duality_general}, and the last by assumption. This yields the strong duality. The uniqueness claim follows by Proposition~\ref{prop:version-of-LR}. 
\end{proof}

An interesting application of this theorem, where $\Qcal$ is defined by a finite number of moment inequalities, is presented in~Section~\ref{SS:moments}.

The above result also corrects some inaccuracies in the statement and proof of Proposition~1 in~\cite{grunwald2024growth}. In particular, the inequality in their Equation~(1.11) would only follow if the GIPr belonged to $\Qcal$, which need not hold in general. This leads them to conclude in their Equation~(1.10) that $\Grow$ equals $H(\Q^{\mathrm{GIPr}} \mid \P)$, which is not true in general. It is worth noting that their assumption is \emph{weaker} than the usual Pythagorean inequality, because their Equation~(1.6) is equivalent to assuming our~\eqref{eq:pythagorean} but with the term $\inf_{\RR \in \Qcal} H(\RR \mid \P)$ being replaced by the quantity $H(\Q^{\mathrm{GIPr}} \mid \P)$, which~\eqref{eq:existence-LR} implies can be smaller. Thus the error leads to a weaker assumption but a stronger conclusion. Example~\ref{eg:csiszar} provides an explicit counterexample: it satisfies their assumptions but not their conclusion.

It is natural to ask whether we can relax the assumption to only needing $\inf_{\Q\in\Qcal}H(\Q\mid \P) < \infty$ (which was anyway required for the GIPr to exist). The following example shows that this condition does not suffice, and we can still have
\begin{equation}\label{eq:strict-inequality}
\sup_{X \in \Ecal} \inf_{\Q \in \Qcal} \E_\Q[\log X] <  \inf_{\Q\in\Qcal}H(\Q\mid \P).
\end{equation}

\begin{example}\label{ex:singletonP}
Let $\Omega=[0,1]$, equipped with the Borel sigma algebra, and consider $\P=\mathsf U$, the uniform. Define $\Q_0$ by
$
{d\Q_0}/{d\P}(\omega)=2\omega
$
so that $\Q_0$ is a probability measure, and set
\[
\Qcal=\conv(\{\Q_0\}\cup\{\delta_\omega:\omega\in[0,1]\}).
\]
Let $X$ be any e-variable with respect to $\P$. Then necessarily $\inf_{\omega\in[0,1]}X(\omega)\le1$; otherwise
$X>1$ everywhere and hence $\E_\P[X]>1$. Therefore, the
left-hand side of~\eqref{eq:strict-inequality} equals zero, achieved by the constant e-variable $1$. 
 On
the other hand, among the elements of $\Qcal$ the only one absolutely
continuous with respect to $\P$ is $\Q_0$, while
$
H(\Q_0\mid\P)
=
\log 2-1/2
>0$.
Hence the right-hand side of \eqref{eq:strict-inequality} is finite and
strictly positive, being attained by $\Q_0$.
\end{example}

The above example does not have $\Q \ll \P$ for every $\Q \in \Qcal$. So it leaves open the possibility that assuming $\inf_{\Q\in\Qcal}H(\Q\mid \P) < \infty$ along with $\Q \ll \P$ for every $\Q \in \Qcal$ may possibly suffice. However, such hopes are quickly dashed by an extension of the above example, which due to its length is provided later as Example~\ref{ex:singletonP-continued}.

We end by noting that Assumption~\ref{ass:singleton-P} is not necessary for strong duality 
to hold. Rather, it additionally ensures that the supremum over e-variables is attained. The following example  emphasizes this point. (See also Example~\ref{ex:260621} for a more statistically relevant instance.)

\begin{example}\label{eg:jipr-likelihoodratio-fail}
    Let $\Omega = \N \cup \{0\}$, equipped with its power set sigma algebra, and let $\Pcal = \{\P\}$ with $\P(n) = 2^{-(n+1)}$ for all $n \in \N_0$. Define two probability measures $\Q_0 = \delta_0$ and $\Q_\infty(n) = 1/(n(n+1))$ for $n \in \N$, and set 
    \[
        \Qcal = \conv(\Q_0,\Q_\infty) = \{t\Q_0+(1-t)\Q_\infty:t\in[0,1]\}.
    \]
    We have $H(\Q_0 \mid \P) = \log 2$, but $H(\Q_\infty \mid \P) = \infty$, so for any $t \in [0,1)$, 
    \[H(t \Q_0 + (1-t)\Q_\infty \mid \P) = \infty.\]
    Consequently, $\Q_0$ is the IPr of $\P$, with relative entropy $\log 2$. The likelihood ratio $X^* = d\Q_0/d\P$ is uniquely determined since $\P$ has full support. It satisfies $X^*(0) = 2$ and $X^*(n) = 0$ for all $n \in \N$. Thus, $\E_{\Q_\infty}[\log X^*] = -\infty$, in particular $\inf_{\Q \in \Qcal} \E_\Q[\log X^*] = -\infty$. To see that strong duality holds, for a small $\varepsilon > 0$, define the e-variable $X^\varepsilon$ by $X^{\varepsilon}(0) = 2(1-\varepsilon)$ and $X^{\varepsilon}(n) = \varepsilon \Q_\infty(n)/\P(n)$. Further, $\E_{\Q_\infty}[\log X^\varepsilon]=\infty$, but $\E_{\Q_0}[\log X^\varepsilon]=\log(2(1-\varepsilon))$. Thus, $\inf_{\Q \in \Qcal} \E_\Q[\log X^\varepsilon] = \log(2(1-\varepsilon))$ which approaches $\log 2$ as $\varepsilon \downarrow 0$, meaning that strong duality holds. (In fact, Proposition~\ref{prop:version-of-LR}, proved later, implies that no e-variable achieves strong duality.)
\end{example}

\begin{remark}\label{rem:mixture-epower}
We explain the key idea behind the above example. Let $\Q^*$ denote the generalized I-projection of $\P$ onto some convex $\Qcal$ with $I = \inf_{\Q \in \Qcal} H(\Q \mid \P) < \infty$. Let  $X^*$ be any fixed version of $d\Q^*/d\P$; clearly $X^* \in \Ecal$. Call a $\Q \in \Qcal$ ``bad'' if $\E_\Q[\log X^*] < I$, and ``good'' otherwise. Suppose there exists an e-variable $Y$ which  satisfies $\E_\Q[\log Y] = \infty$ for every bad $\Q \in \Qcal$. Then for $X_\delta = (1-\delta)X^* + \delta Y \in \Ecal$, the mixture with $Y$ provides protection against the bad $\Q$. Since $X_\delta \geq \delta Y$, we have $\E_\Q[\log X_\delta] \geq \E_\Q[\log Y] + \log \delta = \infty$ for bad $\Q$. And for the ``good'' $\Q$, $X_\delta \geq (1-\delta)X^*$ gives us $\E_\Q[\log X_\delta] \geq I + \log(1-\delta)$. Put together, we get that $\inf_{\Q \in \Qcal} \E_{\Q} [\log X_\delta] \geq I + \log(1-\delta)$, and letting $\delta \downarrow 0$ yields strong duality.  We formalize these ideas further in Theorem~\ref{T:shielding}.
\end{remark}

\subsection{When a joint information projection exists}\label{subsec:jipr}

\cite{larsson2025numeraire} showed that for an arbitrary composite null $\Pcal$ and singleton alternative $\Q$, a  log-optimal e-variable $X^\mathrm{num}$, called the numeraire, always exists and is $\Q$-almost-surely unique and positive. Then, they define the reverse information projection (RIPr) of $\Q$ onto $\Peff$ as the sub-probability measure defined by $d\P^*/d\Q = 1/X^\mathrm{num}$, and show that $\P^* \in \Peff$.

We define the GIPr (or IPr, if it exists) of a nonzero sub-probability $\P \in \Mplus$  onto a convex set $\Qcal$, as the GIPr (or IPr) of $\tilde \P = \P/\P(\Omega)$  onto $\Qcal$ (assuming, as before, that $\inf_{\Q \in \Qcal} H(\Q \mid \tilde \P) < \infty$). Since the Pythagorean inequality~\eqref{eq:pythagorean} contains $\tilde \P$ on both sides, one finds that the normalization constant cancels out. 
Hence the inequality holds for $\P$, despite $\P$ being a sub-probability, a fact that we record as a lemma for easier reference.

\begin{lemma}
    Let $\Qcal$ be convex. For any nonzero sub-probability measure $\P \in \Mplus$ with $\inf_{\Q\in \Qcal} H(\Q\mid \P) < \infty$, its GIPr satisfies~\eqref{eq:pythagorean} verbatim, i.e.,
    \begin{equation*}
H(\Q\mid \Q^{\mathrm{GIPr}}) + 
\inf_{\RR\in\Qcal}H(\RR\mid \P) \leq H(\Q\mid \P).
\end{equation*}
\end{lemma}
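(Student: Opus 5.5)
The plan is to reduce the statement to the probability case by a scaling identity for relative entropy. Write $c=\P(\Omega)\in(0,1]$, which is positive since $\P$ is nonzero, and set $\tilde\P=\P/c\in\Mone$. By definition, the GIPr of $\P$ onto $\Qcal$ is $\Q^{\mathrm{GIPr}}$, the GIPr of $\tilde\P$ onto $\Qcal$.

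The key identity is
\[
H(\Q\mid\P)=H(\Q\mid\tilde\P)-\log c \qquad\text{for every }\Q\in\Mone,
\]
including the case where both sides are infinite. We will verify it directly from Definition~\ref{def:H}. For any finite partition $\pi=\{A_1,\dots,A_n\}$ we have $\P(A_i)=c\,\tilde\P(A_i)$. Since $\sum_i\Q(A_i)=1$, this gives
\[
H_\pi(\Q\mid\P)=H_\pi(\Q\mid\tilde\P)-\log c .
\]
Under the convention \eqref{eq_q_log_q_by_p_convention}, terms with $\Q(A_i)=0$ vanish on both sides. Terms with $\Q(A_i)>0$ and $\P(A_i)=0$ are $+\infty$ on both sides, since $\P(A_i)=0$ exactly when $\tilde\P(A_i)=0$. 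Taking the supremum over $\pi$ yields the identity. Equivalently, when $\Q\ll\P$ one can use $d\Q/d\P=c^{-1}\,d\Q/d\tilde\P$ and the Radon--Nikodym formula for $H$.

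With the identity in hand, we conclude as follows. Taking the infimum over $\Qcal$ gives
\[
\inf_{\RR\in\Qcal}H(\RR\mid\P)=\inf_{\RR\in\Qcal}H(\RR\mid\tilde\P)-\log c .
\]
In particular, $\inf_{\RR\in\Qcal}H(\RR\mid\tilde\P)<\infty$, so the GIPr of $\tilde\P$ exists by the Topsøe--Csiszár result and satisfies \eqref{eq:pythagorean} with $\tilde\P$ in place of $\P$. Now subtract $\log c$ from both sides of that inequality. If $H(\Q\mid\tilde\P)=\infty$ this changes nothing, because the right-hand side is then $+\infty$ both before and after. By the two displays above, the subtraction converts the $\tilde\P$ inequality into the claimed one for $\P$, with the term $H(\Q\mid\Q^{\mathrm{GIPr}})$ left unchanged.

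The main point needing care is the arithmetic with infinite values. For this we note that $-\log c\in[0,\infty)$ is finite, so no expression of the form $\infty-\infty$ arises anywhere in the argument.
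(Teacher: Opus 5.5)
Your proposal is correct and follows the paper's own argument. The paper likewise notes that $H(\cdot\mid\P)=H(\cdot\mid\tilde\P)-\log\P(\Omega)$ shifts both sides of \eqref{eq:pythagorean} by the same finite constant, so the normalization cancels; your partition-level check of this identity, including the infinite cases, spells out what the paper states in a single sentence.
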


Now, we can define the joint information projection (JIPr).

\begin{definition}\label{def:JIPr}
Given arbitrary $\Pcal$ and convex $\Qcal$, we will say that a pair $(\P^*,\Q^*) \in \Mplus \times\Mone$ is a joint information projection (JIPr) if 
\begin{enumerate}
\item[(i)] $\P^* \in \Peff$ is the RIPr of $\Q^*$ onto $\Peff$; 
\item[(ii)] $\inf_{\Q \in \Qcal} H(\Q \mid \P^*) < \infty$ and $\Q^* \in \overline\Qcal^I$ is the GIPr of $\P^*$ onto $\Qcal$.
\end{enumerate}
\end{definition}

Also, assuming (i,ii) is weaker than assuming
\begin{enumerate}
    \item[(iii)] $(\P^*,\Q^*) \in \Peff \times \Qcal$ achieves $\inf_{\Q\in\Qcal}\inf_{\P\in\Peff} H(\Q\mid\P)$, this value is finite, and $\P^* \ll \Q^*$. 
\end{enumerate}
One has the implication (iii) $\implies$ (i,ii), but in general the reverse implication may fail. We provide an example to illustrate that (i,ii) does not imply (iii), even when the GIPr is an IPr.

\begin{example}\label{eg:jipr-definitions}
Let $\Omega=\{0,1,2\}$ and set
\(
\P^*=\frac12\delta_1+\frac12\delta_2\) and $
\Q^*=\delta_1$. 
Define the convex sets
\[
    \Pcal=
\{(1-s)\P^*+s\delta_0:s\in[0,1]\};
\qquad
\Qcal
=
\{(1-s)\Q^*+s\delta_0:s\in[0,1]\}.
\]
Then the absolutely continuous part of $\P^*$ with respect to $\Q^*$ is
\(
\P^*_a=\frac12\delta_1.
\)
We will show that the pair $(\P^{*}_a, \Q^{*})$ satisfies both (i) and (ii).
Since the e-variable $X^* = 2 \1_{\{1\}}$ is the reciprocal of $d\P_a^* / d\Q^*$, 
$\P^*_a$ is the RIPr of $\Q^*$ onto
$\Peff$ \citep[Theorem~4.1]{larsson2025numeraire}, so condition (i) holds.
Next, $\Q^*$ is the IPr of $\P^*_a$ onto $\Qcal$. 
Indeed,
\(
H(\Q^*\mid\P^*_a)=\log 2<\infty,
\)
whereas every element $\Q_s = (1-s)\Q^*+s\delta_0$ with $s>0$ assigns positive mass
to $\{0\}$, so $H(\Q_s \mid \P^*_a) = \infty$. Hence condition~(ii) holds.
However, condition (iii) fails. Indeed, $\delta_0\in\Pcal \cap \Qcal$, and therefore
\(
\inf_{\Q\in\Qcal}\inf_{\P\in\Peff}H(\Q\mid\P)=0.
\)
On the other hand,
\(
H(\Q^*\mid\P^*_a)=\log 2>0.
\)
Thus $(\P^*_a,\Q^*)$ does not attain (iii), even though conditions (i) and (ii) hold.   
\end{example}

Another illustration for (i) and (ii) holding but (iii) failing is given in Example~\ref{eg:csiszar}. This example is slightly longer, but also satisfies Assumption~\ref{ass:composite-P} below, hence Theorem~\ref{T:260608} applies for that example.

Condition~(i) implies that $d\Q^*/d\P^*$ is an e-variable for $\Pcal$ (it is the numeraire for $\Pcal$ versus $\Q^*$). However, it may not achieve the desired strong duality; recall Example~\ref{ex:singletonP}, where $(\P,\Q_0)$ was a JIPr pair satisfying condition (iii). 
Thus, for strong duality to hold, we need an additional condition, analogous to Assumption~\ref{ass:singleton-P}.

\begin{assumption}\label{ass:composite-P}
A JIPr pair $(\P^*, \Q^*)$ exists and there exists a random variable $X^*$ that is a $\P^*$-version  of $d\Q^*/d\P^*$, such that
$\E_\Q[\log X^*] \geq \inf_{\RR\in\Qcal}H(\RR\mid \P^*)$ for each $\Q \in \Qcal$.
\end{assumption}

As before, one can sometimes check the above by verifying that either $H(\Q \mid \Q^*) < \infty$ holds, or the even stronger condition that $H(\Q \mid \P^*) < \infty$ holds, for each $\Q \in \Qcal$. (To see that these imply the assumption, simply rearrange \eqref{eq:pythagorean}, which is allowed even when $H(\Q \mid \P^*)=\infty$).

\begin{theorem}[Strong duality for composite null and alternative]\label{T:260608}
For arbitrary $\Pcal$ and convex $\Qcal$,
suppose there exists a joint information projection $(\P^*,\Q^*)$ such that Assumption~\ref{ass:composite-P} holds. Then the corresponding e-variable $X^*$ is a GROW e-variable, which satisfies the strong duality
    \begin{align*} \label{eq:260619}
    \Grow = \sup_{X \in \Ecal}\inf_{\Q \in \Qcal} \E_{\Q}[\log X] = \inf_{\Q \in \Qcal} \E_\Q [\log X^*] = \inf_{\Q \in \Qcal} H(\Q \mid \P^*) =  \inf_{\Q \in \Qcal} \min_{\P \in \Peff} H(\Q \mid \P).
    \end{align*}
Further, if $\Q^*$ is an IPr (i.e., it lies in $\Qcal$), then $X^*$ is the $\P^*$-a.s.\ unique GROW e-variable.
\end{theorem}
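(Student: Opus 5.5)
The plan is to mimic the proof of Theorem~\ref{thm:singleton-P}: we sandwich $\Grow$ between quantities that all collapse to $\inf_{\Q\in\Qcal}H(\Q\mid\P^*)$. First we check that $X^*\in\Ecal$. By condition~(i) of Definition~\ref{def:JIPr}, $\P^*$ is the RIPr of $\Q^*$ onto $\Peff$, so $d\Q^*/d\P^*$ is (a version of) the numeraire of \cite{larsson2025numeraire}, hence an e-variable. The subtle point is that $X^*$ is only a $\P^*$-version. Since $H(\Q^*\mid\P^*)<\infty$ (by \eqref{eq:existence-LR}), we have $\Q^*\ll\P^*$. However, modifying on a $\P^*$-null set need not preserve the e-variable property for $\Pcal$. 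So I would argue that the numeraire is only determined $\Q^*$-a.s., and that on the $\P^*$-null complement one may take it to be $0$ without harm. Alternatively, if Assumption~\ref{ass:composite-P} is read with $X^*$ in $\Ecal$, this step is trivial. This is the step I expect to need care: one must make sure that the chosen version both satisfies the assumption and has $\E_\P[X^*]\le1$ for all $\P\in\Pcal$. If necessary, I would replace $X^*$ by $X^*\1_{\{X^{\mathrm{num}}>0\}}$-type adjustments that keep the assumption's inequality, because such changes only occur on $\P^*$-null, hence $\Q$-negligible, sets for the relevant $\Q$.

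Granting $X^*\in\Ecal$, we get the chain
\[
\inf_{\Q\in\Qcal}H(\Q\mid\P^*)\le\inf_{\Q\in\Qcal}\E_\Q[\log X^*]\le\Grow\le\inf_{\Q\in\Qcal}\inf_{\P\in\Peff}H(\Q\mid\P)\le\inf_{\Q\in\Qcal}H(\Q\mid\P^*).
\]
The first inequality is Assumption~\ref{ass:composite-P}. The second holds since $X^*\in\Ecal$. The third is weak duality \eqref{weak_duality_general}, using convexity of $\Qcal$ so that $\conv(\Qcal)=\Qcal$. The last holds because $\P^*\in\Peff$. The right end is finite by (ii), so all quantities coincide. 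The inner minimum over $\Peff$ is attained for each $\Q$ by the RIPr of $\Q$ via \eqref{eq:numeraire-duality} (possibly $+\infty$), which justifies writing $\min$.

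For uniqueness when $\Q^*\in\Qcal$, let $X$ be any GROW e-variable, so $\E_{\Q^*}[\log X]\ge \Grow = H(\Q^*\mid\P^*)$. I would use the Pythagorean inequality~\eqref{eq:pythagorean} with $\Q=\Q^*$, which gives $\inf_{\Q\in\Qcal}H(\Q\mid\P^*)=H(\Q^*\mid\P^*)$, the log-optimal value for the simple alternative $\Q^*$ against $\Peff$. Uniqueness of the numeraire then forces $X=X^*$ $\Q^*$-a.s. To pass to $\P^*$-a.s. uniqueness, I would invoke Proposition~\ref{prop:version-of-LR} as in Theorem~\ref{thm:singleton-P}, which handles identification off the support of $\Q^*$ under $\P^*$. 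Concretely, $\E_{\P^*}[X]\le1$ together with $\E_{\P^*}[X^*]=1$, and equality on $\{d\Q^*/d\P^*>0\}$, force $X=0=X^*$ $\P^*$-a.s. on the complement.
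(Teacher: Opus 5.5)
Your sandwich chain is the paper's proof. Assumption~\ref{ass:composite-P} gives the lower bound, membership $X^*\in\Ecal$ places it below $\Grow$, weak duality via \eqref{eq:numeraire-duality} and $\P^*\in\Peff$ give the upper bounds, and (ii) gives finiteness. Your uniqueness step via Proposition~\ref{prop:version-of-LR} is also what the paper does. The paper, like your main line, simply takes $X^*\in\Ecal$ for granted ("the corresponding e-variable", "the first inequality is immediate"). Under that reading your argument is complete.

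However, the fallback you sketch for securing $X^*\in\Ecal$ is wrong and should be discarded. A $\P^*$-null set need not be $\Q$-null for $\Q\in\Qcal$, because the assumption does not force $\Q\ll\P^*$. Modifying $X^*$ off the support of $\P^*$ therefore can change $\E_\Q[\log X^*]$; setting it to $0$ there makes it $-\infty$.

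In fact no repair is possible. Take Example~\ref{eg:jipr-definitions} with the JIPr $(\tfrac12\delta_1,\delta_1)$, where $\inf_{\RR\in\Qcal}H(\RR\mid\P^*)=\log 2$. The $\P^*$-version $X^*=2\,\1_{\{0,1\}}$ gives $\E_{\Q_s}[\log X^*]=\log 2$ for every $s\in[0,1]$, so the inequality in Assumption~\ref{ass:composite-P} holds. Yet $\Grow=0$, since $\delta_0\in\Pcal\cap\Qcal$. Conversely, any version that is an e-variable must satisfy $X^*(0)\le 1$, and then the inequality fails for every $s>0$. So $X^*\in\Ecal$ has to be part of the hypothesis rather than something you derive; the theorem is false without it.

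A smaller point concerns the uniqueness part. The Pythagorean inequality with $\Q=\Q^*$ only yields the trivial bound $\inf_{\Q\in\Qcal}H(\Q\mid\P^*)\le H(\Q^*\mid\P^*)$. The direction you actually need to make a GROW e-variable log-optimal for $\Q^*$ is $H(\Q^*\mid\P^*)\le\inf_{\Q\in\Qcal}H(\Q\mid\P^*)$, and that comes from \eqref{eq:existence-LR}.
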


The above theorem eliminates certain conditions in~\citet[Theorem~1, First Generalisation]{grunwald2024safe}, such as assuming that $H(\Q \mid \Q') < \infty$ for all $\Q,\Q' \in \Qcal$, and also that these have full support.

We note above that if the GIPr actually was an IPr (meaning that the infimum relative entropy was achieved by $\Q^* \in \Qcal$), then we can add to the strong duality a further equality to $H(\Q^* \mid \P^*)$.

\begin{proof}[Proof of Theorem~\ref{T:260608}]
We begin with the weak duality
\[\sup_{X\in\Ecal}
\inf_{\Q\in\Qcal}\E_\Q[\log X]
\leq \inf_{\Q\in\Qcal} \sup_{X\in\Ecal}
\E_\Q[\log X] = \inf_{\Q\in\Qcal} \min_{\P \in \Peff} H(\Q\mid\P) \leq \inf_{\Q\in\Qcal} H(\Q \mid \P^*),
\]
where the sole equality follows from~\eqref{eq:numeraire-duality}, and the two inequalities are immediate.

In the opposite direction, we have
\[
\sup_{X \in \Ecal }\inf_{\Q\in\Qcal}\E_\Q[\log X]
\geq
\inf_{\Q\in\Qcal}\E_\Q[\log X^*]
\geq
\inf_{\Q \in \Qcal} H(\Q \mid \P^*),
\]
where the first inequality is immediate, and the second follows by taking infimum over $\Qcal$ in Assumption~\ref{ass:composite-P}.
Combining the two displays completes the proof of strong duality. The uniqueness claim follows by Proposition~\ref{prop:version-of-LR}. 
\end{proof}

\begin{example}
    Let $\Pcal$ be $\{N(m,1):m\leq0\}$ and $\Qcal$ be (the convex hull of) $\{N(m,1):m\geq1\}$. Then the JIPr pair is given by $(N(0,1),N(1,1))$ and the GIPr is actually an IPr. In this case, the JIPr pair is also a least favorable distribution pair in the sense of~\cite{Huber:Strassen:1973}; indeed \cite{saha2025huber} already applies to yield the GROW e-variable, which is the likelihood ratio of the JIPr pair.
    
    Now take
    $\Pcal$ to be set of all $1$-sub-Gaussian distributions with nonpositive mean, and $\Qcal$ to be (the convex hull of) all $1$-sub-Gaussian distributions with mean at least $1$ and finite relative entropy to the standard Gaussian.
    For this nonparametric class, a least favorable distribution pair in the sense of Huber--Strassen  does not exist, and it is not true that $H(\Q\mid \Q') < \infty$ for all $\Q,\Q'\in \Qcal$ (take, for example, $\Q=\mathsf{U}[1,2],\Q'=\mathsf{U}[3,4]$), so the results in~\cite{grunwald2024safe} do not apply. Nevertheless, the JIPr pair is still $(N(0,1),N(1,1))$, and the conditions of Theorem~\ref{T:260608} are fulfilled, so the GROW e-variable is still their likelihood ratio.
\end{example}

\begin{proposition}\label{prop:version-of-LR}
    Assume that the following strong duality holds:
        \begin{align*} 
    \Grow = \sup_{X \in \Ecal}\inf_{\Q \in \Qcal} \E_{\Q}[\log X] =  \inf_{\Q \in \Qcal} \inf_{\P \in \Peff} H(\Q \mid \P) < \infty,
    \end{align*} 
    and that there exists    
 $(\P^*,\Q^*) \in \Peff \times \Qcal$  such that $H(\Q^* \mid \P^*) = \inf_{\Q \in \Qcal} \inf_{\P \in \Peff} H(\Q \mid \P)$.
    If any e-variable $X^*$ achieves the supremum above, then we must have $X^*={d\Q^*}/{d\P^*}$,  $\P^*\text{-a.s.}$
\end{proposition}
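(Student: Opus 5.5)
Write $I = \inf_{\Q \in \Qcal}\inf_{\P \in \Peff} H(\Q \mid \P) = H(\Q^* \mid \P^*) < \infty$, and let $X^*$ be an e-variable attaining the supremum. The plan is to evaluate $X^*$ only against the single alternative $\Q^*$ and then exploit strict concavity of the logarithm together with the e-variable constraint under $\P^*$.

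\textbf{Step 1: a lower bound on the e-power under $\Q^*$.} Since $X^*$ is optimal and the value equals $I$, we have $\inf_{\Q\in\Qcal}\E_\Q[\log X^*]=I$. Because $\Q^*\in\Qcal$, this gives
\[
\E_{\Q^*}[\log X^*]\ge I = H(\Q^*\mid\P^*).
\]

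\textbf{Step 2: a matching upper bound, with the equality case.} Finiteness of $H(\Q^*\mid\P^*)$ forces $\Q^*\ll\P^*$; set $L=d\Q^*/d\P^*$, so $L>0$ $\Q^*$-a.s. Then
\[
\E_{\Q^*}[\log X^*]-H(\Q^*\mid\P^*)=\E_{\Q^*}\Bigl[\log\frac{X^*}{L}\Bigr]\le \log\E_{\Q^*}\Bigl[\frac{X^*}{L}\Bigr]=\log\E_{\P^*}\bigl[X^*\1_{\{L>0\}}\bigr]\le \log \E_{\P^*}[X^*]\le 0 .
\]
The first inequality is Jensen's. The last one uses $\P^*\in\Peff$ together with~\eqref{eq:260525}, which gives $\E_{\P^*}[X]\le1$ for every $X\in\Ecal$, including unbounded ones. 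The integrability here is not a problem: by Step 1, $\E_{\Q^*}[\log X^*]$ is finite or $+\infty$, and the chain shows it is at most $I<\infty$. Combined with Step 1, every inequality in the chain is therefore an equality.

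\textbf{Step 3: reading off the conclusion.} Equality in Jensen's inequality under the strictly concave logarithm forces $X^*/L=c$ for a constant $c$, $\Q^*$-a.s. Equality in the final step then gives $\log c=0$, so $c=1$. Equality in the middle step gives $\E_{\P^*}[X^*\1_{\{L=0\}}]=0$, so $X^*=0$ $\P^*$-a.s. on $\{L=0\}$, which agrees with $L$ there.

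\textbf{Main obstacle.} The delicate point is passing from "$X^*=L$ $\Q^*$-a.s." to "$X^*=L$ $\P^*$-a.s." on the set $\{L>0\}$. This works because $\Q^*$ and $\P^*$ restricted to $\{L>0\}$ are mutually absolutely continuous: on that set $d\Q^*=L\,d\P^*$ with $L>0$. Hence $X^*=L$ $\P^*$-a.s. on $\{L>0\}$. Together with the conclusion on $\{L=0\}$ from Step 3, this yields $X^*=d\Q^*/d\P^*$ $\P^*$-a.s., as claimed.
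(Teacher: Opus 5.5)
Your proof is correct and is essentially the paper's argument: the paper writes your Jensen chain as the Donsker--Varadhan inequality $\E_{\Q^*}[\log X^*]\le H(\Q^*\mid\P^*)+\log\E_{\P^*}[X^*]$ (Lemma~\ref{lem:extended-entropy-sec}) and sandwiches it against $\Grow$ exactly as in your Steps 1--2. It then replaces your equality-case analysis of Jensen by defining $\Q'$ via $d\Q'/d\P^*=X^*$ and noting $H(\Q^*\mid\Q')=H(\Q^*\mid\P^*)-\E_{\Q^*}[\log X^*]=0$, so $\Q'=\Q^*$, which gives $X^*=d\Q^*/d\P^*$ $\P^*$-a.s.\ in one stroke, whereas your version treats $\{L>0\}$ and $\{L=0\}$ separately but is more self-contained.
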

\begin{proof} 
Since $\E_{\Q^*}[\log X^*]\ge \Grow$ because $X^* \in \Ecal$ witnesses strong duality, applying the Donsker--Varadhan inequality  (Lemma~\ref{lem:extended-entropy-sec}) yields
    \[
      \Grow \leq \E_{\Q^*}[\log X^*] \leq H(\Q^* \mid \P^*) + \log \E_{\P^*}[X^*] \leq H(\Q^* \mid \P^*) = \Grow.
    \] 
It follows that equality holds throughout. In particular, $\log\E_{\P^*}[X^*]=0$, hence $\E_{\P^*}[X^*]=1$, and 
$
\E_{\Q^*}[\log X^*]
=
H(\Q^*\mid\P^*)$. 

Define now a probability measure $\Q'$ by
$
\frac{d\Q'}{d\P^*}=X^*$.
Since $H(\Q^*\mid\P^*)<\infty$, we have $\Q^*\ll\P^*$. Therefore, using the
chain rule for relative entropy,
\[
H(\Q^*\mid\Q')
=
H(\Q^*\mid\P^*)-\E_{\Q^*}[\log X^*]
=
0.
\]
Thus $\Q^*=\Q'$, proving the claim.
\end{proof}

\begin{remark}
    If $\Q^*$ in Proposition~\ref{prop:version-of-LR} is not in $\Qcal$ but in  the I-closure $\overline \Qcal^I$ only, then the conclusion of the statement still holds provided one assumes in addition $\E_{\Q^*}[\log X^*]\ge \Grow$.
\end{remark}

We recall Example~\ref{eg:jipr-likelihoodratio-fail}, which points out that Assumption~\ref{ass:composite-P} is not necessary. Indeed, this example shows that the JIPr can exist (satisfying the stronger condition (iii)) --- it uniquely equals $(\delta_0/2, \Q_0)$ --- and strong duality can hold, but the likelihood ratio of the JIPr pair does not achieve the strong duality.

\subsection{When $\Qcal$ is setwise compact}\label{subsec:grow}

\begin{definition}\label{def:setwise-topology}
The \emph{setwise topology} on $\mathcal M$ is the initial topology $\sigma(\mathcal M,\Bb)$ induced by the maps 
\[ \RR\mapsto \int f\,d\RR, \qquad \RR\in\Mcal, f\in\Bb. 
\]
A set $K\subseteq\mathcal M$ is called setwise closed/compact if it is closed/compact in  $(\mathcal M,\sigma(\mathcal M,\Bb))$.  
\end{definition}

The setwise topology is the subspace topology induced by the weak-$*$ topology in $\ba$: \[ \sigma(\mathcal M,\Bb) = \sigma(\ba,\Bb)\big|_{\mathcal M}. \] Indeed, $\sigma(\ba,\Bb)$ is the coarsest topology on $\ba$ making all maps \( \nu\mapsto \int f\,d\nu, f\in\Bb, \) continuous. Restricting these maps to $\mathcal M\subseteq\ba$ gives exactly the maps defining $\sigma(\mathcal M,\Bb)$.

If we equip $\mathcal M$ with the total variation (TV) norm $\|\cdot\|_{\mathrm{TV}}$ and its norm topology, $\mathrm{TV}$ convergence  implies setwise convergence, because  $\|\RR_n - \RR\|_{\mathrm{TV}}\to 0$ implies $\RR_n\to\RR$ setwise. In addition, if we further suppose that $\Omega$ is Polish and $\mathcal F$ is the Borel sigma algebra, then setwise convergence implies weak convergence: clearly, $\RR_n \to \RR$ in $\sconv$ implies that $\RR_n\to \RR$ in $\sigma(\mathcal M, C_b)$.
All three topologies are Hausdorff.

Recall that $\Qcal\subseteq\Mone$ is said to be uniformly absolutely continuous  with respect to a finite $\RR \in \Mplus$ if $\Q\ll \RR$ for all $\Q\in\Qcal$, and for every $\varepsilon>0$ there exists $\delta>0$ such that for all $A\in\mathcal F$,
$
\RR(A)<\delta$ implies $
\sup_{\Q\in\Qcal} \Q(A)\le \varepsilon$.
It follows directly from \cite[Theorem~4.7.25]{Bogachev2007Measure}  that 
    $\cl^{\sconv}(\co(\Qcal))$ is setwise compact if and only if
    there exists a finite measure $\RR\in\Mplus$ such that $\Qcal$ is uniformly absolutely continuous with respect to $\RR$.
    From this we see that assuming $\Qcal$ to be convex and setwise compact is a rather strong assumption.

Two examples of convex and setwise compact $\Qcal$ are (i) the convex hull of Gaussians with means in $\{-1,1\}$ and unit variance and (ii) all Bernoulli distributions with parameters in $[0.2,0.8]$.

We shall use the following two lemmata.
\begin{lemma}\label{lem:weak-star-setwise}
If $\Qcal$ is convex and setwise compact, then $\cconv^*(\Qcal) = \Qcal$. 
\end{lemma}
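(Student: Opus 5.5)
The plan is to prove the two inclusions $\Qcal\subseteq\cconv^*(\Qcal)$ and $\cconv^*(\Qcal)\subseteq\Qcal$. The first holds by definition of the weak-$*$ closed convex hull. Since $\Qcal$ is convex, $\conv(\Qcal)=\Qcal$, so $\cconv^*(\Qcal)$ is simply the $\sigma(\ba,\Bb)$-closure of $\Qcal$ inside $\ba$. It therefore suffices to show that $\Qcal$ is already weak-$*$ closed as a subset of $\ba$.

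For this I will use the observation made just above: the setwise topology $\sigma(\Mcal,\Bb)$ is exactly the subspace topology of $\sigma(\ba,\Bb)$ restricted to $\Mcal$. Consider the inclusion map $\iota\colon(\Mcal,\sigma(\Mcal,\Bb))\to(\ba,\sigma(\ba,\Bb))$. It is continuous, since each map $\nu\mapsto\int f\,d\nu$ composed with $\iota$ is one of the maps defining the setwise topology. Hence $\iota(\Qcal)=\Qcal$ is a compact subset of $(\ba,\sigma(\ba,\Bb))$.

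That topology is Hausdorff, because $\Bb$ separates points of $\ba$: two distinct finitely additive measures already differ on some indicator $\1_A\in\Bb$. A compact subset of a Hausdorff space is closed, so $\Qcal$ is weak-$*$ closed in $\ba$, and its closure equals itself. This gives $\cconv^*(\Qcal)=\Qcal$.

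There is no real obstacle. The only point requiring care is that compactness must be transferred into the larger ambient space $\ba$, not merely $\Mcal$, because a priori the weak-$*$ closure might pick up purely finitely additive limit points. The continuity of the inclusion, together with the Hausdorff property of $\sigma(\ba,\Bb)$, handles exactly this: any net in $\Qcal$ converging weak-$*$ in $\ba$ has a subnet converging setwise within $\Qcal$, and limits are unique.
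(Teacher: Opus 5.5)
Your proposal is correct and follows essentially the same argument as the paper: convexity reduces $\cconv^*(\Qcal)$ to the weak-$*$ closure of $\Qcal$. Setwise compactness then transfers to compactness in $(\ba,\sigma(\ba,\Bb))$ because the setwise topology is the subspace topology, and the Hausdorff property makes $\Qcal$ weak-$*$ closed.
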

\begin{proof}
    Since the setwise
topology on $\Mone$ is the subspace topology induced by
$\sigma(\ba,\Bb)$, the set $\Qcal$ is compact also as a subset of
$(\ba,\sigma(\ba,\Bb))$. The latter space is Hausdorff, so compact subsets are
closed. Hence $\Qcal$ is $\sigma(\ba,\Bb)$-closed, yielding the statement.
\end{proof}

\begin{lemma}[Distribution-uniform monotone convergence]\label{lem:uniform-mct}
If $\Qcal$ is setwise compact then
\begin{equation}\label{eq:dumc}
\text{$f_n \ge 0$ and $f_n \uparrow f$} \quad \Longrightarrow \quad \inf_{\Q \in \Qcal} \E_\Q[f_n] \uparrow \inf_{\Q \in \Qcal} \E_\Q[f].
\end{equation}
Further, if $\eqref{eq:dumc}$ holds, then $\Qcal$ is relatively setwise compact and $\cconv^*(\Qcal) \subseteq \Mone$.
\end{lemma}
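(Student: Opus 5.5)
Both parts rest on a Dini-type argument on a compact set, combined with the Yosida--Hewitt decomposition.

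\emph{First claim.} Let $\Qcal$ be setwise compact and let $0\le f_n\uparrow f$. Put $a_n=\inf_{\Q\in\Qcal}\E_\Q[f_n]$, which is nondecreasing and bounded by $a=\inf_{\Q}\E_\Q[f]$. Set $a_\infty=\lim a_n$ and suppose for contradiction that $a_\infty<a$. Since $\E_\Q[f_n]\uparrow\E_\Q[f]$ by monotone convergence, it suffices to work with the truncations $f_n\wedge m\uparrow f\wedge m$. Indeed, $\inf_\Q\E_\Q[f\wedge m]\uparrow a$ as $m\to\infty$ (argued the same way below), so we may assume everything is bounded. For fixed bounded $g_n=f_n\wedge m$, the map $\Q\mapsto\E_\Q[g_n]$ is setwise continuous.

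Take $c$ with $a_\infty<c<a$ (for the truncated problem). The sets $K_n=\{\Q\in\Qcal:\E_\Q[g_n]\le c\}$ are closed and decreasing, and each is nonempty because $\inf_\Q \E_\Q[g_n]\le a_\infty<c$. By compactness some $\Q_0$ lies in all $K_n$. Monotone convergence under $\Q_0$ then gives $\E_{\Q_0}[g]\le c<a$, a contradiction.

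For the interchange of $m$ and $n$, apply the same closed-nested-set argument to the family $\Q\mapsto \E_\Q[f\wedge m]$ in $m$. Each such map is continuous, it increases to $\E_\Q[f]$, and the levels $\{\E_\Q[f\wedge m]\le c\}$ are closed. A diagonal bound $\inf_\Q \E_\Q[f_n]\ge\inf_\Q\E_\Q[f_n\wedge m]$ then finishes the argument.

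\emph{Second claim.} Assume \eqref{eq:dumc}. Take $\nu\in\cconv^*(\Qcal)\subseteq\ba_1$, which lies in $\ba_1$ because each element is a positive functional of mass one. The goal is to show $\nu$ is countably additive. Let $A_n\downarrow\emptyset$ and apply \eqref{eq:dumc} to $f_n=\1_{A_n^c}\uparrow 1$. This gives $\inf_\Q\Q(A_n^c)\uparrow 1$, that is, $\sup_{\Q\in\Qcal}\Q(A_n)\to0$.

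This uniform bound passes to convex combinations. It also passes to weak-$*$ limits, because $\mu\mapsto\mu(A_n)$ is weak-$*$ continuous. Hence $\nu(A_n)\le\sup_\Q\Q(A_n)\to0$, so $\nu$ is continuous from above at $\emptyset$. Being finitely additive, $\nu$ is therefore countably additive, and $\cconv^*(\Qcal)\subseteq\Mone$.

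Relative setwise compactness follows. The set $\cconv^*(\Qcal)$ is weak-$*$ compact by Banach--Alaoglu and lies inside $\Mcal$. The setwise topology is the trace of $\sigma(\ba,\Bb)$, so it is a setwise compact set containing $\Qcal$.

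The main obstacle is the first claim in the unbounded case. Continuity of $\Q\mapsto\E_\Q[f_n]$ can fail when $f_n$ is unbounded, so the careful point is the reduction to truncations. Working with lower semicontinuity of $\Q\mapsto\E_\Q[f_n]$, as a supremum of continuous maps, would already suffice. In that case the sets $K_n$ are still closed, and the nested-compact argument goes through directly without truncation.
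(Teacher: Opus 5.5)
Your proof is correct and is essentially the paper's argument. The paper extracts a setwise-convergent subnet of near-minimizers $\Q_n$ and passes to the limit through the truncations $f_k\wedge m$ (in effect, lower semicontinuity of $\Q\mapsto\E_\Q[f_k]$) plus monotone convergence, which is just the net formulation of your nested closed sublevel-set (finite intersection) argument. Your second part matches the paper's step by step, and, as you observe yourself, the lower-semicontinuity version makes your preliminary truncation/interchange detour unnecessary and is the cleanest way to present the first claim.
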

\begin{proof}
Set
\[
a_n=\inf_{\Q\in\Qcal}\E_\Q[f_n],
\qquad
a=\inf_{\Q\in\Qcal}\E_\Q[f].
\]
Then $a_n$ is increasing and $a_n\le a$. Let
$L=\lim_n a_n$. 
Suppose, to the contrary, that
$L<a$.  
For each $n$, choose $\Q_n\in\Qcal$ such that
\[
\E_{\Q_n}[f_n]\le a_n+\frac1n.
\]
By setwise compactness, there is a subnet $\Q_{n_\alpha}$ converging
setwise to some $\Q^*\in\Qcal$. Since $n_\alpha$ is a subnet of the
sequence of indices, we have $n_\alpha\uparrow\infty$.

Fix now $k,m\in\N$. For all sufficiently large $\alpha$, $n_\alpha\ge k$, and hence
\[
\E_{\Q_{n_\alpha}}[f_k\wedge m]
\le
\E_{\Q_{n_\alpha}}[f_{n_\alpha}]
\le
a_{n_\alpha}+\frac1{n_\alpha}.
\]
Since $f_k\wedge m\in\Bb$, setwise convergence yields
$
\E_{\Q^*}[f_k\wedge m]\le L$.
By monotone convergence we get
$
\E_{\Q^*}[f_k]\le L$.
Letting now $k\uparrow\infty$ and using monotone convergence once more gives
$
\E_{\Q^*}[f]\le L$.
Therefore
\[
a=\inf_{\Q\in\Qcal}\E_\Q[f]\le \E_{\Q^*}[f]\le L,
\]
contradicting $L<a$. Hence $L=a$, proving the claim.

Let $A_n\downarrow\varnothing$. Applying \eqref{eq:dumc} to
\(
    f_n=\1_{A_n^c}\uparrow1
\)
gives
\[
\begin{aligned}
    1-\sup_{\Q\in\Qcal}\Q(A_n)
    &=
    \inf_{\Q\in\Qcal}\Q(A_n^c) \uparrow
    \inf_{\Q\in\Qcal}\Q(\Omega)
    =
    1.
\end{aligned}
\]
Hence
$
    \sup_{\Q\in\Qcal}\Q(A_n)\downarrow 0$.
Now let $\mu \in \cconv^*(\Qcal)$.
Thus there is a net $\Q_\alpha$ in $\co(\Qcal)$ that weak-$*$ converges to $\mu$. For each fixed $n$, since
$\1_{A_n}\in \Bb$,
\[
    \mu(A_n)
    =
    \lim_\alpha \Q_\alpha(A_n)
    \le
    \sup_{\Q\in\Qcal}\Q(A_n) \downarrow 0
    \qquad\text{whenever }A_n\downarrow\varnothing.
\]
Thus $\mu$ is continuous from above, hence 
$\mu\in \Mone$, yielding $\cconv^*(\Qcal) \subseteq \Mone$.
 Since $\ba_1$ is weak-$*$ compact, this closure is weak-$*$
compact. The weak-$*$ topology restricted to $\Mone$ is exactly the
setwise topology, and hence the setwise closure of $\Qcal$ is
compact. Thus $\Qcal$ is relatively setwise compact.
\end{proof}

With these two lemmata in place we  can now prove the following.

\begin{theorem}\label{thm:setwise}
If $\Pcal$ is arbitrary and $\Qcal$ is convex and setwise compact, then
\[
\Grow= \Grow_b =\min_{\Q\in \Qcal}\min_{\P\in\Peff} H(\Q\mid \P).
\]
\end{theorem}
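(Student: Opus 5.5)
Our plan is to sandwich $\Grow$ and $\Grow_b$ between the bounded strong duality of Theorem~\ref{T_strong_duality} and the weak duality \eqref{weak_duality_general}. Since $\Qcal$ is convex and setwise compact, Lemma~\ref{lem:weak-star-setwise} gives $\cconv^*(\Qcal)=\Qcal\subseteq\Mone$. Theorem~\ref{T_strong_duality} then says that
\[
\Grow_b=\min_{\Q\in\Qcal}\min_{\mu\in\cconv^*(\Pcal)}H(\Q\mid\mu),
\]
and that the minimum is attained by a pair $(\mu^*,\Q^*)$ with $\Q^*\in\Qcal$ countably additive.

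For each fixed $\Q\in\Mone$, Corollary~\ref{C:260526} replaces the inner minimum over $\cconv^*(\Pcal)$ by $\min_{\P\in\Peff}H(\Q\mid\P)$. This gives
\[
\Grow_b=\min_{\Q\in\Qcal}\min_{\P\in\Peff}H(\Q\mid\P).
\]
The outer minimum is still attained, at $\Q^*$, because the inner values coincide for every $\Q$. The inner minimum over $\Peff$ is attained by the RIPr of \cite{larsson2025numeraire}.

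It remains to show $\Grow\le\Grow_b$; the reverse inequality $\Grow\ge\Grow_b$ is trivial. Weak duality \eqref{weak_duality_general}, with $\conv(\Qcal)=\Qcal$, gives
\[
\Grow\le\inf_{\Q\in\Qcal}\inf_{\P\in\Peff}H(\Q\mid\P)=\Grow_b,
\]
so all three quantities coincide. This argument also covers the case where the common value is $+\infty$.

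The step needing care is checking that Corollary~\ref{C:260526} applies pointwise and that minima, rather than infima, survive the substitution. This is exactly where setwise compactness enters: it guarantees that the minimizer $\nu^*$ from Theorem~\ref{T_strong_duality} is a genuine probability measure in $\Qcal$, so that Corollary~\ref{C:260526} is applicable to it. Lemma~\ref{lem:uniform-mct} could also be used to pass monotone limits from $X\wedge n$ to $X$ uniformly over $\Qcal$, but the weak-duality sandwich makes that unnecessary.
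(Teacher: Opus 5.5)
Your proof is correct. Its first half (Lemma~\ref{lem:weak-star-setwise} giving $\cconv^*(\Qcal)=\Qcal$, then Theorem~\ref{T_strong_duality} and Corollary~\ref{C:260526}) is exactly the paper's.

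The second half takes a different route. The paper proves $\Grow_b\ge\Grow$ constructively. It approximates an arbitrary $X\in\Ecal$ by $X_{\varepsilon,n}=\varepsilon+(1-\varepsilon)(X\wedge n)\in\Ecal_b$. It then uses the distribution-uniform monotone convergence of Lemma~\ref{lem:uniform-mct} to pass $n\uparrow\infty$ inside $\inf_{\Q\in\Qcal}$, and finally lets $\varepsilon\downarrow0$. You instead note that the trivial bound $\Grow\le\inf_{\Q\in\Qcal}\sup_{X\in\Ecal}\E_\Q[\log X]=\inf_{\Q\in\Qcal}\min_{\P\in\Peff}H(\Q\mid\P)$ already equals the value just computed for $\Grow_b$, so no approximation is needed.

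Your route is shorter. It also shows conceptually that $\Grow=\Grow_b$ whenever the bounded minimax value reaches the weak-duality upper bound. The paper's route buys generality: it uses only property~\eqref{eq:dumc}, not convexity or closedness of $\Qcal$. It therefore gives $\Grow=\Grow_b=\min_{\Q\in\cconv^*(\Qcal)}\min_{\P\in\Peff}H(\Q\mid\P)$ under \eqref{eq:dumc} alone, as noted in the remark following the theorem. In that setting your sandwich would only yield $\Grow\le\inf_{\Q\in\conv(\Qcal)}\min_{\P\in\Peff}H(\Q\mid\P)$. That bound can a priori be strictly larger than the minimum over the larger set $\cconv^*(\Qcal)$.

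Your closing paragraph slightly misplaces where compactness matters. The essential point is not that the minimizer is countably additive. It is that $\cconv^*(\Qcal)=\Qcal$, so the strong-duality formula for $\Grow_b$ and the weak-duality bound for $\Grow$ optimize over the same set. That is what closes your sandwich.
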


The proof below clarifies that distribution-uniform monotone convergence \eqref{eq:dumc} is the only property of $\Qcal$ that we need in order for $\Grow=\Grow_b=\min_{\Q\in \cconv^*(\Qcal) \cap \Mone}\min_{\P\in\Peff} H(\Q\mid \P)$ to hold. 
(Indeed, Lemma~\ref{lem:uniform-mct} shows that under~\eqref{eq:dumc}, the weak-$*$ closure of $\Qcal$ lies in $\Mone$, and then Corollary~\ref{C:260526} allows to replace $\cconv^*(\Pcal)$ with $\Peff$.) 
Setwise compactness only helps to further restrict to just $\Qcal$ on the right hand side.

    \begin{proof}[Proof of Theorem~\ref{thm:setwise}]
    We first argue that 
    \(
    \Grow_b = \min_{\Q\in  \Qcal}\min_{\P\in\Peff} H(\Q\mid \P).
    \)
Lemma~\ref{lem:weak-star-setwise} allows us to replace $\cconv^*(\Qcal)$ in Theorem~\ref{T_strong_duality} with just $\Qcal$. Corollary~\ref{C:260526} shows that for $\Q \in \Mone$,
$\min_{\mu \in \cconv^*(\Pcal)} H(\Q\mid\mu)= \min_{\P\in\Peff} H(\Q\mid \P)$.
This proves $\Grow_b=\min_{\Q\in  \Qcal}\min_{\P\in\Peff} H(\Q\mid \P)$.

    Clearly \(\Grow_b\le \Grow\), so we focus on the reverse inequality. Fix an arbitrary e-variable \(X\). For \(\varepsilon\in(0,1)\) and
\(n\in\mathbb N\), define
\[
X_{\varepsilon,n}
=
\varepsilon+(1-\varepsilon)(X\wedge n), \qquad X_\varepsilon = \varepsilon + (1-\varepsilon) X.
\]
Then \(X_{\varepsilon,n}\) is strictly positive and \(X_{\varepsilon,n} \in \Ecal_b\). For fixed \(\varepsilon\), we have
\(
\log X_{\varepsilon,n}\uparrow \log X_\varepsilon
\)
and \(\log X_{\varepsilon,n}\ge \log \varepsilon\). Hence Lemma~\ref{lem:uniform-mct} (applied to the nonnegative functions $\log X_{\varepsilon,n} - \log \varepsilon$) yields
\[
\Grow_b
\ge
\sup_n \inf_{\Q\in\Qcal} \E_\Q[\log X_{\varepsilon,n}]
=
\inf_{\Q\in\Qcal} \E_\Q[\log X_\varepsilon]
\ge
\inf_{\Q\in\Qcal} \E_\Q[\log X]
+
\log(1-\varepsilon).
\]
Letting \(\varepsilon\downarrow 0\), we get
\(
\Grow_b
\ge
\inf_{\Q\in\Qcal} \E_\Q[\log X].
\)
Since \(X \in \Ecal\) was arbitrary, we conclude that
\(
\Grow_b\ge \Grow
\).
\end{proof}

We end this section with a slightly strengthened form of~\citet[Proposition~4.5]{ram2026optimal}. In preparation, define 
the solid hull of $\Pcal$ as
\[
\sol(\Pcal)=\{\S\in \Mplus : \S\le \P \text{ for some } \P\in\Pcal\} \subseteq \Peff.
\]

\begin{proposition}\label{prop:klinf-infkl}
Assume $(\Omega, \mathcal F)$ is a Polish space with its Borel sigma algebra. If $\Pcal$ is convex and weakly compact, then $\Peff$ equals the solid hull of $\Pcal$, and thus for any $\Q \in \Mone$,
    \[
    \min_{\S \in \Peff} H(\Q \mid \S)  = \min_{\P \in \Pcal} H(\Q \mid \P).
    \]
Thus, in Theorems~\ref{T:260608} and \ref{thm:setwise}, if $\Pcal$ is convex and weakly compact, we can replace $\Peff$ by $\Pcal$ on the right-hand side.
\end{proposition}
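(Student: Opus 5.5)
The plan is to prove the two inclusions $\sol(\Pcal)\subseteq\Peff$ and $\Peff\subseteq\sol(\Pcal)$. The first is immediate: if $\S\le\P\in\Pcal$, then $\int X\,d\S\le\int X\,d\P\le1$ for every $X\in\Ecal$.

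For the reverse inclusion, fix $\S\in\Peff$. The goal is a $\P\in\Pcal$ with $\S\le\P$ setwise. I would combine a minimax argument with a Strassen-type characterization of domination.

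\textbf{Step 1 (minimax over continuous functions).} For $g\in C_b$ with $g\ge0$, I claim
\[
\int g\,d\S\le\max_{\P\in\Pcal}\int g\,d\P .
\]
To see this, set $c=\max_{\P\in\Pcal}\int g\,d\P$; the maximum exists by weak compactness and continuity of $\P\mapsto\int g\,d\P$. If $c>0$, then $g/c\in\Ecal$, so $\int g\,d\S\le c$. If $c=0$, the same argument applied to $g/\varepsilon$ for every $\varepsilon>0$ gives $\int g\,d\S=0$.

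\textbf{Step 2 (upgrade to domination).} Next I would show
\[
\int g\,d\S\le\max_{\P\in\Pcal}\int g\,d\P\quad\text{for all }g\in C_b,\ g\ge0
\quad\Longrightarrow\quad \exists\,\P\in\Pcal:\ \S\le\P.
\]
This step is the main obstacle. I would attack it with Sion's minimax theorem on a convex set of test functions. Consider
\[
\Phi(\P,\phi)=\int\phi\,d\S-\int\phi\,d\P,\qquad \phi\in C_b,\ 0\le\phi\le1 .
\]
The map $\phi\mapsto\Phi(\P,\phi)$ is linear, and $\P\mapsto\Phi(\P,\phi)$ is affine and weakly continuous on the compact convex set $\Pcal$. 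Step 1 gives $\sup_\phi\min_{\P}\Phi\le0$, so Sion yields
\[
\min_{\P\in\Pcal}\sup_{0\le\phi\le1,\ \phi\in C_b}\Bigl(\int\phi\,d\S-\int\phi\,d\P\Bigr)\le0 .
\]
On a Polish space, both $\S$ and $\P$ are Radon. Hence the supremum over continuous $\phi$ with $0\le\phi\le1$ equals $\sup_A(\S(A)-\P(A))=(\S-\P)^+(\Omega)$. To justify this, approximate the Hahn set from inside by compact sets and from outside by open sets, and use Urysohn functions; inner regularity of the finite Borel measures $\S$ and $\P$ controls the error. Consequently there is a $\P^\circ\in\Pcal$ with $(\S-\P^\circ)^+=0$, that is, $\S\le\P^\circ$. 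Thus $\Peff=\sol(\Pcal)$.

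\textbf{Step 3 (relative entropy).} Let $\Q\in\Mone$. Since $\Pcal\subseteq\sol(\Pcal)=\Peff$, we get $\inf_{\Peff}H(\Q\mid\cdot)\le\inf_{\Pcal}H(\Q\mid\cdot)$. Conversely, if $\S\le\P$, then $H(\Q\mid\S)\ge H(\Q\mid\P)$, because $H_\pi(\Q\mid\cdot)$ is decreasing in its second argument for every partition $\pi$. So the two infima agree.

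Attainment on $\Peff$ follows from the existence of the RIPr, as in \cite{larsson2025numeraire}. The resulting minimizer $\S^*\le\P^*\in\Pcal$ then gives $H(\Q\mid\P^*)\le H(\Q\mid\S^*)$, so the minimum over $\Pcal$ is attained at $\P^*$. Alternatively, attainment over $\Pcal$ follows from weak lower semicontinuity of $H(\Q\mid\cdot)$ via the Donsker--Varadhan formula over $C_b$, which is valid on Polish spaces, together with weak compactness of $\Pcal$.

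Finally, in Theorems~\ref{T:260608} and \ref{thm:setwise}, substituting this identity pointwise in $\Q$ replaces $\Peff$ by $\Pcal$ on the right-hand side.
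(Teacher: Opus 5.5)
Your proof is correct, but your argument for $\Peff\subseteq\sol(\Pcal)$ differs from the paper's. The paper argues by contradiction, in three moves:
\begin{itemize}
\item it first shows that $\sol(\Pcal)$ is convex and weakly closed, by extracting a convergent subsequence from $\Pcal$;
\item it then separates a putative $\RR\in\Peff\setminus\sol(\Pcal)$ from $\sol(\Pcal)$ by some $f\in C_b$ via Hahn--Banach;
\item finally, it uses solidity to rewrite $\sup_{\S\in\sol(\Pcal)}\int f\,d\S=\sup_{\P\in\Pcal}\int f^+\,d\P$, and normalizes $f^+$ into a bounded e-variable with $\E_\RR[X]>1$.
\end{itemize}

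You run the same normalization forwards (your Step~1). You then let Sion's theorem on $\Pcal\times\{\phi\in C_b:0\le\phi\le1\}$ produce a single $\P^\circ\in\Pcal$ that dominates $\S$ against every continuous $[0,1]$-valued test function, and regularity then gives $\S\le\P^\circ$ setwise.

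Each route buys something different. Yours bypasses the closedness of the solid hull entirely and exhibits the dominating measure directly. It uses only compactness and convexity of $\Pcal$ plus closed-regularity of finite Borel measures, so it works on any metric space. The paper's route is a shorter one-shot separation, and it records the extra fact that $\sol(\Pcal)$ is weakly closed.

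Both arguments rely on the same regularity input; the paper uses it implicitly when it asserts that weak limits of the nonnegative measures $\P_n-\S_n$ are nonnegative. Your version of it can be simplified. You do not need to identify the supremum with $(\S-\P^\circ)^+(\Omega)$: testing against $\phi_n\downarrow\1_F$ gives $\S(F)\le\P^\circ(F)$ for closed $F$, and inner approximation of Borel sets by closed sets finishes.

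For attainment, the paper uses weak lower semicontinuity of $H(\Q\mid\cdot)$ on the compact set $\Pcal$, which is your second option. Your RIPr-based alternative is also valid.
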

\begin{proof}
To begin with, on a Polish space, the weak topology on probability measures is metrizable and thus the topology on uniformly bounded subprobabilities is metrizable also. Thus, we may work with sequences instead of nets. 
Weak compactness of $\Pcal$ implies that $\sol(\Pcal)$ is weakly closed. Indeed, suppose $\S_n \in \sol(\Pcal)$ with $\S_n \le \P_n$ for some $\P_n \in \Pcal$ and suppose $\S_n \to \S$. By weak compactness of $\Pcal$, we may pass to a subsequence such that $\P_n \to \P \in \Pcal$. Then, $\P_n - \S_n \to \P - \S$ and since each $\P_n-\S_n$ is positive, so is $\P-\S$. This implies that $\S\le \P$ and thus $\S\in \sol(\Pcal)$. This shows that $\sol(\Pcal)$ is weakly closed. 

We next prove $\sol(\Pcal)\supseteq \Peff$. 
To this end, suppose there exists $\RR \in \Peff$ with $\RR\notin \sol(\Pcal)$. 
Now, $\sol(\Pcal)$ being weakly closed and convex, there exists an $f\in C_b$ such that
\[
\int f^+ d\RR \geq \int f d\RR> \sup_{\S \in \sol(\Pcal)} \int f d\S.
\]
Solidity yields
\[
\sup_{\S\in \sol(\Pcal)} \int f d\S = \sup_{\P\in\Pcal}\int f^+ d\P.
\]
Indeed, this is true since for each $\P\in\Pcal$ the supremum of $\int f d \S$ over $\S \le \P$ is achieved by $\S = \1_{f>0}\P$. Hence, we have the existence of $f \in C_b$ such that
\[
\int f^+ d\RR > \sup_{\P\in \Pcal} \int f^+ d\P.
\]
Now set $M=\sup_{\P\in\Pcal} \int f^+ d\P$. If $M>0$ then $X = f^+ / M \in \Ecal_b$ with $\E_{\RR}[X]>1$. If $M=0$, then $c = \int f^+ d\RR>0$ but $X= 2 f^+ / c \in \Ecal_b$ with $\E_{\RR}[X]=2>1$. This proves that $\RR\notin \Peff$. We conclude that $\Peff = \sol(\Pcal)$. 

Now, if $\S\le \P$, then $H(\Q\mid \S)\ge H(\Q\mid \P)$. Moreover, since $\Pcal\subseteq \Peff$, it follows that $\inf_{\S\in \Peff} H(\Q\mid \S) = \inf_{\P\in\Pcal} H(\Q\mid \P)$. Finally, since the map $\P\mapsto H(\Q\mid \P)$ is weakly lower semicontinuous and $\Pcal$ is weakly compact, we have the last infimum is attained. 
\end{proof}

\subsection{When $\Pcal$ and $\Qcal$  are  convex and weakly compact}\label{subsec:grow-weakly-compact}

In this section, we will prove a countably additive version of our main result by imposing a Polish sample space $\Omega$ and a weak compactness assumption on $\Pcal$ and $\Qcal$, which allows one to work with continuous e-variables and avoid assuming a common dominating reference measure.

To this end, in this section alone we will assume that $\Omega$ is a Polish space, equipped with the Borel sigma algebra. We will equip $\Mone$ with the weak topology $\sigma(\mathcal M, C_b)$ meaning that a sequence $\P_n$ converges weakly to $\P$ if and only if for every $h\in C_b$, $\int h d\P_n \to \int h d\P$. Also, we will write 
$$\Ecal_{bc} =\Ecal_{bb}\cap C_b,$$
where $\Ecal_{bb}$ is the subset of e-variables that are bounded from above and away from zero. We denote the corresponding GROW values as $\Grow_{bb}$ and $\Grow_{bc}$.

\begin{theorem}\label{thm:nondominated-compact-convex}
Suppose that $\Omega$ is Polish, equipped with the Borel sigma algebra, and assume $\Pcal$ and $\Qcal$ are both convex and weakly compact. Then, 
\begin{align*}
\Grow = \Grow_b = \Grow_{bb} = \Grow_{bc} 
=\min_{\Q\in\Qcal}\min_{\P\in\Pcal} H(\Q\mid \P).
\end{align*}
\end{theorem}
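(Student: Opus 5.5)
The proof will establish the chain of inequalities
\[
\Grow_{bc}\le \Grow_{bb}\le \Grow_b\le \Grow \le \inf_{\Q\in\Qcal}\inf_{\P\in\Pcal}H(\Q\mid\P),
\]
and then close it by showing $\Grow_{bc}\ge \min_{\Q\in\Qcal}\min_{\P\in\Pcal}H(\Q\mid\P)$. The first three inequalities hold because $\Ecal_{bc}\subseteq\Ecal_{bb}\subseteq\Ecal_b\subseteq\Ecal$. The fourth is the weak duality \eqref{weak_duality_general}, using that $\Pcal$ and $\Qcal$ are convex. Attainment of the infimum follows from two facts: $(\Q,\P)\mapsto H(\Q\mid\P)$ is jointly weakly lower semicontinuous on $\Mone\times\Mone$, and $\Qcal\times\Pcal$ is weakly compact. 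Joint lower semicontinuity holds because on a Polish space the Donsker--Varadhan supremum in Lemma~\ref{lem:extended-entropy-sec} may be restricted to $f\in C_b$, which exhibits $H$ as a supremum of weakly continuous functions.

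The substantive step is the reverse inequality, which I would prove by a minimax argument carried out entirely in the countably additive, weakly continuous world, mirroring the proofs of Lemma~\ref{lem:generalization-of-numeraire} and Theorem~\ref{thm:weak-star-offset-duality} with $\Bb$ replaced by $C_b$ and $\cconv^*$ replaced by the weakly compact sets themselves.

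First, for fixed $\Q\in\Mone$, apply Sion's theorem to
\[
\Phi(f,\P)=\int f\,d\Q-\log\int e^f\,d\P
\]
on $C_b\times\Pcal$. This function is concave and norm-continuous in $f$, and convex and weakly continuous in $\P$, and $\Pcal$ is weakly compact and convex. Combined with the $C_b$-version of Donsker--Varadhan, this gives
\[
\min_{\P\in\Pcal}H(\Q\mid\P)=\sup_{f\in C_b}\Bigl(\int f\,d\Q-\sup_{\P\in\Pcal}\log\int e^f\,d\P\Bigr)=\sup_{g\in\Gamma_c}\int g\,d\Q,
\]
where $\Gamma_c=\{g\in C_b:\sup_{\P\in\Pcal}\E_\P[e^g]\le1\}$ and the last equality uses the normalization $g=f-c(f)$. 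For $g\in\Gamma_c$, the variable $e^g$ lies in $\Ecal_{bc}$: it is continuous, bounded above, bounded away from zero, and an e-variable.

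Second, apply Sion again to $L(g,\Q)=\int g\,d\Q$ on $\Gamma_c\times\Qcal$. The set $\Gamma_c$ is convex by H\"older, $\Qcal$ is weakly compact and convex, and $L$ is bilinear and weakly continuous in $\Q$ because $g\in C_b$. This yields
\[
\sup_{g\in\Gamma_c}\inf_{\Q\in\Qcal}\int g\,d\Q=\min_{\Q\in\Qcal}\sup_{g\in\Gamma_c}\int g\,d\Q=\min_{\Q\in\Qcal}\min_{\P\in\Pcal}H(\Q\mid\P),
\]
and the left-hand side is at most $\Grow_{bc}$. This closes the chain, including the case where the common value is $+\infty$.

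I expect the main obstacle to be the $C_b$-version of the Donsker--Varadhan formula: for $\Q\in\Mone$ and $\P\in\Mplus$ on a Polish space, the supremum over $\Bb$ must equal the supremum over $C_b$. My plan here is to approximate a given $f\in\Bb$ by continuous functions $f_n$, uniformly bounded, with $f_n\to f$ in $L^1(\Q+\P)$ (hence a.e. along a subsequence), which is possible by Lusin's theorem since finite Borel measures on Polish spaces are Radon. Dominated convergence then gives $\int f_n\,d\Q\to\int f\,d\Q$ and $\int e^{f_n}\,d\P\to\int e^f\,d\P$. A smaller point to check is that Sion applies with the noncompact side being the convex set $C_b$ or $\Gamma_c$; no topology is needed there beyond concavity and upper semicontinuity in $f$, which the sup norm provides.
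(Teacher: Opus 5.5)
Your proposal is correct and follows essentially the same route as the paper. Both use the chain of inclusions plus weak duality, then a robust Donsker--Varadhan formula via Sion on $C_b\times\Pcal$ (the paper's Lemma~\ref{lem:Donsker-Varadhan-convex-compact}), then a second Sion application over $\Qcal$; there the paper uses $\Psi(\Q,g)=\E_\Q[g]-c(g)$ on all of $C_b$ instead of $\int g\,d\Q$ on your normalized set $\Gamma_c$, which is an equivalent reformulation. The only real difference is that the paper cites the $C_b$-version of Donsker--Varadhan \citep[Lemma 6.2.13]{dembo2009large}, whereas your $L^1(\Q+\P)$-density argument derives it from Lemma~\ref{lem:extended-entropy-sec}, and that derivation works (closed regularity of finite Borel measures on metric spaces already suffices).
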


By Theorem~\ref{T_strong_duality}, $\Grow_b$ (and hence $\Grow,\Grow_{bc},\Grow_{bb}$) also equals $\min_{\nu\in\cconv^*(\Qcal)}\min_{\mu\in\cconv^*(\Pcal)} H(\nu\mid \mu)$.
The proof uses the Donsker--Varadhan variational representation on Polish
spaces \citep[Lemma 6.2.13]{dembo2009large},
\begin{equation}\label{eq:Donsker-Varadhan-Cb}
H(\Q\mid\P)
=
\sup_{g\in C_b}
\left(
\E_\Q[g]-\log \E_\P[e^g]
\right).
\end{equation}
We first record a robust version over compact convex null classes, obtained by
an application of Sion's minimax theorem.

\begin{lemma}[Robust Donsker--Varadhan formula]
\label{lem:Donsker-Varadhan-convex-compact}
Suppose that $\Omega$ is Polish, equipped with its Borel sigma algebra, and
let $\Pcal$ be convex and weakly compact. Then, for
every $\Q\in\Mone$,
\[
\sup_{g\in C_b}
\inf_{\P\in\Pcal}
\left(
\E_\Q[g]-\log \E_\P[e^g]
\right)
=
\min_{\P\in\Pcal} H(\Q\mid\P).
\]
\end{lemma}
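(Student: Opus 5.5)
The plan is to mirror the proof of Lemma~\ref{lem:generalization-of-numeraire}, with $C_b$ in place of $\Bb$ and the weak topology in place of the weak-$*$ topology. Fix $\Q\in\Mone$ and define, for $g\in C_b$ and $\P\in\Pcal$,
\[
\Phi(g,\P)=\E_\Q[g]-\log\E_\P[e^g].
\]
By \eqref{eq:Donsker-Varadhan-Cb} we have $\sup_{g\in C_b}\Phi(g,\P)=H(\Q\mid\P)$ for each $\P\in\Pcal$. Hence the right-hand side of the lemma equals $\inf_{\P\in\Pcal}\sup_{g\in C_b}\Phi(g,\P)$, and the claim reduces to exchanging the supremum and infimum, together with attainment of the infimum.

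Next we verify the hypotheses of Sion's minimax theorem, minimizing over $\Pcal$ and maximizing over $C_b$.
\begin{itemize}
\item $\Pcal$ is convex and compact in $\sigma(\Mcal,C_b)$; on a Polish space this is a Hausdorff topology on $\Mone$.
\item $C_b$ is a convex subset of a vector space.
\item For fixed $g$, the map $\P\mapsto\E_\P[e^g]$ is affine and weakly continuous, because $e^g\in C_b$. It takes values in $[e^{\inf g},e^{\sup g}]\subseteq(0,\infty)$, so $\P\mapsto\Phi(g,\P)$ is convex and continuous. This is the point where $g\in C_b$ rather than $g\in\Bb$ is essential.
\item For fixed $\P$, the map $g\mapsto\Phi(g,\P)$ is concave by H\"older's inequality, exactly as in Lemma~\ref{lem:generalization-of-numeraire}.
\end{itemize}
Sion's theorem uses only convexity on the sup side and no topology there, so it applies. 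It gives
\[
\sup_{g\in C_b}\inf_{\P\in\Pcal}\Phi(g,\P)=\inf_{\P\in\Pcal}\sup_{g\in C_b}\Phi(g,\P)=\inf_{\P\in\Pcal}H(\Q\mid\P).
\]

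Attainment follows because $\P\mapsto H(\Q\mid\P)=\sup_g\Phi(g,\P)$ is a supremum of weakly continuous functions, hence weakly lower semicontinuous, and $\Pcal$ is weakly compact. So the infimum is a minimum, which may equal $+\infty$; Sion's theorem and the argument above remain valid in that case.

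The main point requiring care is Sion's theorem itself: it needs compactness and convexity on one side and lower semicontinuity in the compact variable, which we have. I do not expect a serious obstacle. The only delicate points are that weak compactness must be in a Hausdorff topological vector space setting, which holds for $(\Mcal,\sigma(\Mcal,C_b))$, and that Donsker--Varadhan with $C_b$ test functions requires the Polish assumption, which is why that hypothesis appears.
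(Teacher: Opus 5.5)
Your proposal is correct and follows essentially the same route as the paper. Both use the same $\Phi$, apply Sion's minimax theorem over the weakly compact convex $\Pcal$ together with the $C_b$ Donsker--Varadhan formula \eqref{eq:Donsker-Varadhan-Cb}, and get attainment from weak lower semicontinuity of $\P\mapsto H(\Q\mid\P)$. One small correction: the standard form of Sion's theorem does require upper semicontinuity in the non-compact variable $g$, contrary to your remark. This is harmless because $g\mapsto\Phi(g,\P)$ is continuous in the supremum norm, as the paper notes, and your claim that no topology is needed is in any case valid only in the convex--concave (Kneser--Fan) version.
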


\begin{proof}
Fix $\Q\in\Mone$ and define, for $g\in C_b$ and $\P\in\Pcal$,
$
\Phi(g,\P)
=
\E_\Q[g]-\log \E_\P[e^g]$.
For fixed $g\in C_b$, the map $\P\mapsto \Phi(g,\P)$ is weakly continuous,
because $e^g\in C_b$. It is also convex in $\P$, since
$\P\mapsto \E_\P[e^g]$ is affine and $x\mapsto-\log x$ is convex on
$(0,\infty)$. For fixed $\P\in\Pcal$, the map $g\mapsto \Phi(g,\P)$ is
continuous (with respect to the supremum norm) and concave on $C_b$. Hence Sion's minimax theorem and \eqref{eq:Donsker-Varadhan-Cb} yield
\[
\sup_{g\in C_b}\inf_{\P\in\Pcal}\Phi(g,\P)
=
\inf_{\P\in\Pcal}\sup_{g\in C_b}\Phi(g,\P)
=
\inf_{\P\in\Pcal}H(\Q\mid\P).
\]
Finally, the map $\P\mapsto H(\Q\mid\P)$ is weakly lower semicontinuous, again
by \eqref{eq:Donsker-Varadhan-Cb}, since it is the supremum over $g\in C_b$ of
weakly continuous functions of $\P$. As $\Pcal$ is weakly compact, the infimum
is attained. 
\end{proof}

\begin{proof}[Proof of Theorem~\ref{thm:nondominated-compact-convex}]
The inclusions $\Ecal_{bc}\subseteq\Ecal_{bb}\subseteq\Ecal_b \subseteq \Ecal$, together with \eqref{weak_duality_general}, imply \[ \Grow_{bc}\le \Grow_{bb}\le \Grow_b\le \Grow \leq \inf_{\Q\in\Qcal} \inf_{\P\in\Pcal} H(\Q\mid\P).
\]
It is therefore enough to prove the reverse inequality \[ \Grow_{bc} \ge \inf_{\Q\in\Qcal} \inf_{\P\in\Pcal} H(\Q\mid\P) . \] 
Indeed, this will imply equality throughout. The attainment of the two infima will follow from weak compactness and weak lower semicontinuity of relative entropy.

To make headway, for $g\in C_b$, define
\[
c(g)
=
\log \sup_{\P\in\Pcal}\E_\P[e^g],
\qquad
\Psi(\Q,g)
=
\E_\Q[g]-c(g),
\qquad \Q\in\Qcal.
\]
The function $c$ is finite-valued and continuous. It is also convex since for each
$\P\in\Pcal$ the map $g\mapsto \log \E_\P[e^g]$ is convex and $c$ is the
pointwise supremum of these convex functions.
Consequently, for each $\Q\in\Qcal$, the map
$g\mapsto\Psi(\Q,g)$ is concave and continuous on $C_b$. For each $g\in C_b$, the map
$\Q\mapsto\Psi(\Q,g)$ is affine and weakly continuous on $\Qcal$.

We now claim that
\begin{equation}\label{eq:continuous-payoff-reduction}
\Grow_{bc}
=
\sup_{g\in C_b}
\inf_{\Q\in\Qcal}
\Psi(\Q,g).
\end{equation}
For $g\in C_b$, define
$
X_g
=
\exp(g-c(g))$.
Then $X_g$ is continuous and bounded from above and below. Moreover,
\[
\sup_{\P\in\Pcal}\E_\P[X_g]
=
e^{-c(g)}
\sup_{\P\in\Pcal}\E_\P[e^g]
=
1,
\]
so $X_g\in\Ecal_{bc}$. Since
$
\E_\Q[\log X_g]=
\Psi(\Q,g)$,
we obtain
$
\Grow_{bc}
\ge
\sup_{g\in C_b}
\inf_{\Q\in\Qcal}
\Psi(\Q,g)$.
Conversely, let $X\in\Ecal_{bc}$ and set $g=\log X$. Then $g\in C_b$ and
$
c(g)
=
\log\sup_{\P\in\Pcal}\E_\P[X]
\le 0$.
Hence, for every $\Q\in\Qcal$,
\[
\E_\Q[\log X]
=
\E_\Q[g]
\le
\E_\Q[g]-c(g)
=
\Psi(\Q,g).
\]
It follows that
$
\inf_{\Q\in\Qcal}
\E_\Q[\log X]
\le
\sup_{h\in C_b}
\inf_{\Q\in\Qcal}
\Psi(\Q,h)$. 
Taking the supremum over $X\in\Ecal_{bc}$ gives the reverse inequality in
\eqref{eq:continuous-payoff-reduction}.

For fixed $g\in C_b$, the map
$\Q\mapsto\Psi(\Q,g)$ is affine and weakly continuous. For fixed
$\Q\in\Qcal$, the map $g\mapsto\Psi(\Q,g)$ is concave and continuous.
Since $\Qcal$ is convex and weakly compact, Sion's theorem gives
\[
\sup_{g\in C_b}\inf_{\Q\in\Qcal}\Psi(\Q,g)
=
\inf_{\Q\in\Qcal}\sup_{g\in C_b}\Psi(\Q,g).
\]
Combining this with \eqref{eq:continuous-payoff-reduction}, we obtain
\begin{align*}
\Grow_{bc}
&=
\inf_{\Q\in\Qcal}
\sup_{g\in C_b}
\left(
\E_\Q[g]
-
\log\sup_{\P\in\Pcal}\E_\P[e^g]
\right) 
=
\inf_{\Q\in\Qcal}\inf_{\P\in\Pcal}H(\Q\mid\P),
\end{align*}
where the last equality follows from
Lemma~\ref{lem:Donsker-Varadhan-convex-compact}. 
Noting that relative entropy is jointly
 weakly lower semicontinuous and
$\Qcal \times \Pcal$ is weakly compact, we conclude that the infimum is attained. 
\end{proof}

The following corollary gives a simple criterion for when an e-variable witnesses strong duality. Its proof is immediate and hence omitted.

\begin{corollary}\label{cor:attainment}
  Assume either the setup of Theorem~\ref{thm:setwise} or of Theorem~\ref{thm:nondominated-compact-convex}, and let  $(\P^*,\Q^*)$ denote any pair that achieves the right-hand side of the strong duality. If $\Grow < \infty$, then $(\P^*_a,\Q^*)$ is a joint information projection. (Here, $\P^*_a$ is the absolutely continuous part of $\P^*$ with respect to $\Q^*$.) If we further assume that Assumption~\ref{ass:composite-P} holds for the pair $(\P^*_a,\Q^*)$ , then Theorem~\ref{T:260608} implies that the e-variable $X^* = d\Q^*/d\P^*_a$ achieves the supremum in $\Grow$. 
\end{corollary}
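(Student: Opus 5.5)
The plan is to check the two clauses of Definition~\ref{def:JIPr} directly from the minimizing property of $(\P^*,\Q^*)$, and then simply invoke Theorem~\ref{T:260608} for the second sentence.

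\textbf{Finiteness and membership in $\Peff$.} In the setup of Theorem~\ref{thm:setwise}, $\P^*\in\Peff$. In the setup of Theorem~\ref{thm:nondominated-compact-convex}, $\P^*\in\Pcal\subseteq\Peff$. In both cases $\Q^*\in\Qcal$ and $H(\Q^*\mid\P^*)=\Grow<\infty$, so $\Q^*\ll\P^*$. Write $\P^*=\P^*_a+\P^*_s$ with $\P^*_s\perp\Q^*$. Then $\Q^*\ll\P^*_a$ and $d\Q^*/d\P^*_a=d\Q^*/d\P^*$ holds $\Q^*$-a.s., so
\[
H(\Q^*\mid\P^*_a)=H(\Q^*\mid\P^*)=\Grow .
\]
Since $\P^*_a\le\P^*$ setwise, $\P^*_a\in\Peff$, using the remark after \eqref{eq:260523} that $\Peff$ is solid. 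Moreover $\P^*_a\neq0$, because $\Q^*\ll\P^*_a$.

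\textbf{Clause (i).} By joint minimality, $\P^*$, and hence also $\P^*_a$, attains $\min_{\P\in\Peff}H(\Q^*\mid\P)$. By \eqref{eq:numeraire-duality}, this minimum equals $\E_{\Q^*}[\log X^{\mathrm{num}}]$, where $X^{\mathrm{num}}$ is the numeraire for $\Q^*$. Now repeat the Donsker--Varadhan argument from the proof of Proposition~\ref{prop:version-of-LR}, with $\P^*_a$ in place of $\P^*$:
\[
H(\Q^*\mid\P^*_a)=\E_{\Q^*}[\log X^{\mathrm{num}}]\le H(\Q^*\mid\P^*_a)+\log\E_{\P^*_a}[X^{\mathrm{num}}]\le H(\Q^*\mid\P^*_a).
\]
Equality throughout forces $\E_{\P^*_a}[X^{\mathrm{num}}]=1$. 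The chain-rule step then gives $X^{\mathrm{num}}=d\Q^*/d\P^*_a$, $\P^*_a$-a.s. Because $\P^*_a$ and $\Q^*$ are mutually absolutely continuous, this says $d\P^*_a/d\Q^*=1/X^{\mathrm{num}}$. So $\P^*_a$ is exactly the RIPr of $\Q^*$ in the sense of \cite{larsson2025numeraire}. The step needing the most care is this one: one must make sure the equivalence of $\P^*_a$ and $\Q^*$ lets us pass from a $\P^*_a$-a.s.\ identity to the defining $\Q^*$-a.s.\ identity of the RIPr. That is why the singular part $\P^*_s$ has to be discarded first.

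\textbf{Clause (ii).} For every $\Q\in\Qcal$, using $\P^*_a\le\P^*$ and the joint minimality of $(\P^*,\Q^*)$,
\[
H(\Q\mid\P^*_a)\ge H(\Q\mid\P^*)\ge\Grow=H(\Q^*\mid\P^*_a).
\]
Hence $\inf_{\Q\in\Qcal}H(\Q\mid\P^*_a)=\Grow<\infty$, and this infimum is attained at $\Q^*\in\Qcal$, so $\Q^*$ is the IPr of $\P^*_a$ onto the convex set $\Qcal$. By Csisz\'ar's theory an IPr satisfies the Pythagorean inequality \eqref{eq:pythagorean}, so it coincides with the GIPr. The lemma preceding Definition~\ref{def:JIPr} shows that the normalization of the sub-probability $\P^*_a$ is harmless. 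Trivially $\Q^*\in\Qcal\subseteq\overline\Qcal^I$, so $(\P^*_a,\Q^*)$ is a JIPr.

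The final sentence of the corollary then follows verbatim from Theorem~\ref{T:260608} applied to this JIPr under Assumption~\ref{ass:composite-P}. It yields that $X^*=d\Q^*/d\P^*_a$ attains $\Grow$.
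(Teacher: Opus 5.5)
Your argument is correct. It follows the route the paper has in mind when it calls the proof immediate: you verify that $(\P^*_a,\Q^*)$ satisfies condition (iii) after Definition~\ref{def:JIPr}, then unpack (iii) $\Rightarrow$ (i,ii), using the Donsker--Varadhan/chain-rule argument of Proposition~\ref{prop:version-of-LR} for the RIPr and Csisz\'ar's Pythagorean inequality for the IPr. One step needs a patch: in the setup of Theorem~\ref{thm:nondominated-compact-convex}, joint minimality is over $\Pcal\times\Qcal$, not $\Peff\times\Qcal$, so showing that $\P^*$ minimizes $H(\Q^*\mid\cdot\,)$ over $\Peff$ requires Proposition~\ref{prop:klinf-infkl}, or simply $\min_{\P\in\Peff}H(\Q^*\mid\P)=\sup_{X\in\Ecal}\E_{\Q^*}[\log X]\ge\Grow=H(\Q^*\mid\P^*)$ via \eqref{eq:numeraire-duality}.
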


We also recall Example~\ref{eg:jipr-likelihoodratio-fail} to remind us that even with a simple null and setwise and weakly compact and convex $\Qcal$, strong duality may hold, but the likelihood ratio of the JIPr pair can have an arbitrarily bad worst-case e-power, and no e-variable witnesses the strong duality; see also Example~\ref{ex:260621}. We conclude with an example frequently encountered in the recent e-variable literature.

\begin{example}[Bounded means]\label{eg:bounded-mean}
    Let $\Omega=[0,1]$ and let $Z(\omega)=\omega$ denote the coordinate variable on $[0,1]$. Set
     $\Pcal=\{\P: \E_{\P}[Z] \leq a \}$ and $\Qcal=\{\Q: \E_{\Q}[Z] \geq b \}$, for some $0 < a<b < 1$.  \cite{larsson2026testing} show that every e-variable is bounded by one of the form \(
    1+\lambda(Z-a)
    \)
    for $\lambda \in [0,1/a]$, and is bounded by $1/a$. Therefore, $\Grow=\Grow_b < \infty$ is not surprising. Since $\Pcal$ and $\Qcal$ are both convex and weakly compact,  Corollary~\ref{cor:attainment} applies to conclude that a JIPr pair $(\P^*,\Q^*)$ exists (in this case, uniquely): it is in fact just the pair of Bernoulli distributions with parameters $a,b$. 
    The continuous e-variable obtained by picking $\lambda^* = (b-a)/(a(1-a))$ is easily checked to be a $\P^*$-version of their likelihood ratio.
    
    Assumption~\ref{ass:composite-P} can be verified as follows. Since $\P^* = (1-a)\delta_0+a\delta_1$, $\inf_{\RR \in \Qcal} H(\RR \mid \P^*)$ is achieved by $\Q^*$. (Indeed, any $\RR$ not supported on $\{0,1\}$ has infinite relative entropy, so we only check amongst Bernoullis.) So we need to check that for every $\Q \in \Qcal$, 
    \[
        \E_\Q[\log X^*] \geq H(\Q^* \mid \P^*) = b \log \frac{b}{a} + (1-b)\log\frac{1-b}{1-a}.
    \]
    Denote $\phi(\omega) = \log X^*(\omega) = \log(1+ \lambda^*(\omega-a))$. Since $\phi$ is concave on $[0,1]$, it lies above its chord between 0 and 1: $\phi(\omega) \geq (1-\omega)\phi(0) + \omega\phi(1)$. So for any $\Q \in \Qcal$, $\E_\Q[\phi(Z)] \geq \E_\Q[1-Z]\phi(0) + \E_\Q[Z]\phi(1)$. Since $\E_\Q[Z] \geq b$ and $\phi(1) > \phi(0)$, we get $\E_\Q[\log X^*] \geq (1-b)\phi(0) + b\phi(1) = H(\Q^* \mid \P^*)$, as required.
    
    The GROW optimality of $1+\lambda^*(Z-a)$ thus follows from Corollary~\ref{cor:attainment}. \cite{arnold2026optimal} proved the GROW optimality of this e-variable as well, but it is reassuring that we can derive the same result as a special case of our general theory. To emphasize this point, we present similar calculations for a slightly more complicated problem in Appendix~\ref{app:bounded-second-moment}, where unbounded data have bounded second moment, and the null and alternative classes differ in their means.
\end{example}

Other natural statistical examples that are convex and weakly compact include relative entropy balls $\Pcal=\{\P: H(\P \mid \P_0) \leq c_0\}$ and $\Qcal=\{\Q: H(\Q \mid \Q_0) \leq c_1\}$, for some $\P_0,\Q_0\in\Mone$ and $c_0,c_1 > 0$, or Hellinger balls on compact Polish $\Omega$.

\subsection{When the sample space is finite}\label{subsec:finite-Omega}

Below, we consider the case when $\Omega=\{1,\ldots,d\}$, and identify probability measures on
$\Omega$ with vectors in the simplex
\[
\Delta_d=\left\{p\in[0,1]^d:\sum_{i=1}^d p_i=1\right\}.
\]

\begin{theorem}[Finite sample spaces with no null coordinates] \label{T:260619}
Let $\Pcal,\Qcal\subseteq\Delta_d$ be convex and compact
sets. Assume that $\Pcal$ is supported on all coordinates, meaning that $\sup_{\P \in \Pcal} \P_i > 0$ for all $i \in \{1, \ldots, d\}$. 
Then $\Ecal=\Ecal_b$ and 
\[
\Grow=\Grow_b
=
\max_{X\in\Ecal_b}
\inf_{\Q\in\Qcal}
\E_\Q[\log X]
=
\min_{\Q\in\Qcal}\min_{\P\in\Pcal}H(\Q \mid \P) < \infty,
\]
meaning that the GROW value is finite and is attained by some bounded
e-variable $X^*\in\Ecal_b$. Moreover, if $(\P^*,\Q^*)\in\Pcal\times\Qcal$ minimizes the right-hand side, then
$
X^*={d\Q^*}/{d\P^*}$, $\P^*\text{-a.s.}$
\end{theorem}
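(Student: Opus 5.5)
The plan has four steps: show that every e-variable is uniformly bounded, obtain the duality from the earlier compactness theorem, get attainment from compactness of $\Ecal_b$, and read off uniqueness from Proposition~\ref{prop:version-of-LR}.

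\textbf{Step 1: $\Ecal=\Ecal_b$.} For each coordinate $i$, choose $\P^{(i)}\in\Pcal$ with $p^{(i)}_i>0$. This is possible by the support assumption. Any $X\in\Ecal$ satisfies $X(i)\,p^{(i)}_i\le \E_{\P^{(i)}}[X]\le1$, so
\[
X(i)\le C:=\max_i 1/p^{(i)}_i<\infty.
\]
Thus every e-variable is bounded by the uniform constant $C$, and in particular $\Grow=\Grow_b$.

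\textbf{Step 2: the duality.} A finite set is Polish with the discrete topology. On it, weak, setwise and Euclidean topologies on $\Delta_d$ coincide, so $\Pcal$ and $\Qcal$ are convex and weakly compact. Theorem~\ref{thm:nondominated-compact-convex} then gives
\[
\Grow=\Grow_b=\min_{\Q\in\Qcal}\min_{\P\in\Pcal}H(\Q\mid\P),
\]
with both minima attained. Alternatively, Theorem~\ref{thm:setwise} together with Proposition~\ref{prop:klinf-infkl} gives the same conclusion.

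Finiteness comes from the barycentre $\bar\P=\frac1d\sum_i\P^{(i)}\in\Pcal$, which has full support by convexity. For any $\Q\in\Qcal$,
\[
H(\Q\mid\bar\P)\le\log\Bigl(\max_i 1/\bar\P_i\Bigr)<\infty,
\]
since $\sum_i q_i\log(q_i/\bar\P_i)\le\sum_i q_i\log(1/\bar\P_i)$.

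\textbf{Step 3: attainment by an e-variable.} By Step~1, $\Ecal=\Ecal_b$ is identified with a subset of $[0,C]^d$ cut out by the closed conditions $\sum_i X(i)p_i\le1$, $\P\in\Pcal$. So it is a compact convex subset of $\R^d$.

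The map $X\mapsto\inf_{\Q\in\Qcal}\E_\Q[\log X]$ is an infimum of functions $X\mapsto\sum_i q_i\log X(i)$. Each of these is upper semicontinuous from $[0,C]^d$ to $[-\infty,\infty)$: every term $q_i\log X(i)$ is continuous into $[-\infty,\log C]$ when $q_i>0$, and is identically zero when $q_i=0$ by the convention $0\cdot\log0=0$. An infimum of upper semicontinuous functions is upper semicontinuous. Hence the supremum over the compact set $\Ecal_b$ is a maximum, attained at some $X^*$.

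The only care needed is the convention for $0\cdot\log 0$. The paper defines $\E_\Q[\log X]$ via positive and negative parts, which indeed gives $q_i=0\Rightarrow$ no contribution, so this step goes through.

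\textbf{Step 4: uniqueness.} Let $(\P^*,\Q^*)\in\Pcal\times\Qcal$ minimize the right-hand side. To apply Proposition~\ref{prop:version-of-LR}, I need its hypothesis in the form
\[
\Grow=\inf_{\Q\in\Qcal}\inf_{\P\in\Peff}H(\Q\mid\P),
\]
with the minimizer lying in $\Peff\times\Qcal$. By Proposition~\ref{prop:klinf-infkl}, $\min_{\Peff}=\min_{\Pcal}$, so the infimum over $\Peff$ equals the infimum over $\Pcal$. Moreover $\Pcal\subseteq\Peff$, so $(\P^*,\Q^*)\in\Peff\times\Qcal$ attains the infimum over $\Peff$. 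The Proposition then yields $X^*=d\Q^*/d\P^*$ $\P^*$-a.s. for any maximizing $X^*$, which is the claimed identification.

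I expect Step~3 to be the main obstacle, specifically handling the upper semicontinuity with the $-\infty$ values; the rest is assembly of earlier results.
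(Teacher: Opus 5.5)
Your proof is correct and follows the paper's argument essentially step for step: the bound $X(i)\le 1/\P^{(i)}_i$, the barycentre $\overline{\P}$ for finiteness, compactness of $\Ecal$ plus upper semicontinuity for attainment, and Proposition~\ref{prop:klinf-infkl} followed by Proposition~\ref{prop:version-of-LR} for the identification $X^*=d\Q^*/d\P^*$. The only deviation is that you obtain the duality from Theorem~\ref{thm:nondominated-compact-convex} (a finite discrete $\Omega$ is Polish, and weak compactness there is Euclidean compactness), whereas the paper applies Theorem~\ref{T_strong_duality} after noting that $\cconv^*(\Pcal)=\Pcal$ and $\cconv^*(\Qcal)=\Qcal$ in finite dimension; both reductions are valid.
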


\begin{proof}
Any $X \in \Ecal$ takes only finitely many values, all finite due to the support assumption on $\Pcal$. This shows 
$\Ecal=\Ecal_b$, so $\Grow = \Grow_b$.
Next, since $\Omega$ is finite, weak convergence on $\Delta_d$ is the same as coordinatewise, hence Euclidean, convergence. Thus the assumed compactness of $\Pcal$ and $\Qcal$ implies weak compactness. Moreover, weak-$*$ closure agrees with Euclidean closure in finite dimension, and therefore \[ \cconv^*(\Pcal)=\Pcal, \qquad \cconv^*(\Qcal)=\Qcal. \]
Theorem~\ref{T_strong_duality} then gives
\begin{align} \label{eq:260623}
\Grow = \Grow_b
=
\min_{\Q\in\Qcal}\min_{\P\in\Pcal}H(\Q \mid \P).
\end{align}
To show that the right-hand side above is finite, for each $i$ choose
$\P^{(i)}\in\Pcal$ with $\P_i^{(i)}>0$, and define
$
\overline \P=\frac1d\sum_{i=1}^d \P^{(i)} \in \Pcal$,
by convexity. Then every $\Q\in\Qcal$ satisfies $\Q\ll \overline \P$, so
$H(\Q\mid \overline \P)<\infty$.

Finally, we prove that the supremum over e-variables is attained. Note that \[ \Ecal \subseteq \prod_{i=1}^d \left[0,\frac{1}{\P_i^{(i)}}\right]. \] Moreover, $\Ecal$ is closed, since it is defined by the closed linear inequalities \[ \sum_{i=1}^d \P_iX_i\le1, \qquad \P\in\Pcal. \] Thus $\Ecal$ is compact.
For fixed $\Q\in\Qcal$, the map
\(
X\mapsto \E_\Q[\log X]
\)
is upper semicontinuous, hence so is
$
X\mapsto
\inf_{\Q\in\Qcal}
\E_\Q[\log X]
$.
Therefore this objective attains its maximum on the compact set $\Ecal$.

Let $(\P^*,\Q^*)\in\Pcal\times\Qcal$ now minimize the right-hand side of \eqref{eq:260623}, and let
$X^*\in\Ecal$ be any GROW optimizer. By Proposition~\ref{prop:klinf-infkl} we have 
 \[     
    \min_{\Q \in \Qcal} \min_{\P \in \Peff} H(\Q \mid \P)  = H(\Q^* \mid \P^*).
    \]
Applying Proposition~\ref{prop:version-of-LR} then yields the final claim.
\end{proof}

\section{Additional examples}\label{sec:examples}

\subsection{$\Grow > \Grow_b$ with singleton $\Pcal = \{\P\}$ and countable $\Qcal$ on $\N$}

While our main Theorem~\ref{T_strong_duality} establishes strong duality for $\Ecal_b$ without imposing any assumptions on $\Pcal$ or $\Qcal$, the strong duality results for $\Ecal$ in Section~\ref{sec:grow} require additional restrictions on these families. A natural question is whether such restrictions are necessary. We answer this in part by presenting a small modification of Example~3.15 in \cite{ramdas2024hypothesis}, which shows that $\Grow>\Grow_b$ can occur. Notably, the example only involves a singleton null $\Pcal=\{\P\}$ (which is hence compact in any reasonable topology), and a countable alternative $\Qcal=\{\Q_n:n\in\N\}$ on $\N$. In particular,  these measures admit a common dominating measure. Thus, even in a very regular setting, the equality $\Grow=\Grow_b$ may fail.

\begin{example}\label{ex:example-from-ramdas-wang}
Let $\Omega=\N_0=\N\cup\{0\}$, let $\P=(1/2)\delta_0+(1/2)\delta_1$, where $\delta_x$ denotes the point mass at $x$, and let $\Qcal=\{\Q_n:n\in\N\}$, where \[ \Q_n=\frac1n\delta_n+\left(1-\frac1n\right)\delta_0. \] Clearly, the family admits a common dominating reference measure. Moreover, \[ \inf_{\Q\in\Qcal}H(\Q\mid\P)=H(\Q_1\mid\P)=\log 2<\infty, \] since $\Q_1=\delta_1$, whereas $H(\Q_n\mid\P)=\infty$ for all $n\ge2$. The same remains true after passing to $\conv(\Qcal)$, and hence the (generalized) information projection of $\P$ onto $\conv(\Qcal)$ equals $\Q_1$.
We further have the following observations.
    \begin{enumerate}
\item[(i)]
For $\varepsilon\in (0,1)$, define
\[
X^\varepsilon(n)
=
\exp\left(n\log(2-\varepsilon)-(n-1)\log\varepsilon\right),
\qquad n\in\N_0.
\]
Then $X^\varepsilon(0)=\varepsilon$ and $X^\varepsilon(1)=2-\varepsilon$, hence
$
\E_\P[X^\varepsilon]
=
1$ and $X^\varepsilon$ is an e-variable for $\P$. Moreover, for every $n\in \mathbb N$,
\[
\E_{\Q_n}[\log X^\varepsilon]
=
\frac1n\left(n\log(2-\varepsilon)-(n-1)\log\varepsilon\right)
+
\left(1-\frac1n\right)\log\varepsilon
=
\log(2-\varepsilon).
\]
Therefore
$
\inf_{\Q\in\Qcal}\E_\Q[\log X^\varepsilon]
=
\log(2-\varepsilon)$,
which tends to $\log 2$ as $\varepsilon\downarrow0$. By weak duality (recall \eqref{weak_duality_general}), this is the
largest possible value. Hence $\Grow=\log 2$, and the conclusion of Theorem~\ref{thm:singleton-P} holds, although its assumption fails. 
    \item [(ii)]
    There does not exist any  bounded e-variable $X$ for which $\E_\Q[X] > 1$ for all $\Q \in \Qcal$. Indeed, meeting this requirement for $\Q_1$ alone would imply $X(1)>1$. To meet this requirement for $\Q_n$, we would need $(1/n) X(n) + (1-1/n) X(0)>1$ for all $n\in \N$.
    Since $X$ is bounded, this then implies $X(0)\ge 1$. 
    Together with $X(1)>1$, we have $\E_\P[X]>1$, conflicting the fact that $X$ is an e-variable for $\P$. 

     \item [(iii)]
    Point (ii) above and Jensen's inequality then imply that there is no bounded e-variable $X$ for which $\inf_{\Q\in\Qcal}\E_\Q[\log X] > 0$. 
     This means that $\Grow_b=0$.

\item [(iv)]    Note that here $\P$ does not lie in $\conv(\Qcal)$, but it does lie within its TV-closed convex hull (hence also within its weak-$*$ closure). To see this, note that by sending $n\uparrow\infty$, $\Q_n$ converges to $\delta_0$ in total variation. Averaging $\delta_0$ with $\Q_1$ yields $\P$. Thus the information projection of $\P$ onto $\co(\Qcal)$, which equals $\Q_1$, differs from its information projection onto the TV-closure of $\co(\Qcal)$, which equals $\P$ itself. This reinforces a point made by~\cite[Example~3.1]{csiszar1984sanov} that the generalized I-projection of $\P$ onto $\co(\Qcal)$ cannot in general be seen as the I-projection onto some closure of $\co(\Qcal)$.
\end{enumerate}
\end{example}

The above example shows that strong duality alone does not imply $\Grow=\Grow_b$.
Indeed, strong duality holds for $\Grow$ by (i), while Theorem~\ref{T_strong_duality} gives the corresponding strong-duality representation for $\Grow_b$; nevertheless, in this example $\Grow>\Grow_b$. This failure occurs despite substantial structure: the null class is a singleton, the families admit a common dominating reference measure, and the relevant information projection is explicit.

\subsection{Extension of Example~\ref{ex:singletonP} that maintains $\Q \ll \P$ for all $\Q \in \Qcal$}

We now provide the promised extension of Example~\ref{ex:singletonP}, which additionally maintains $\Q \ll \P$ for all $\Q \in \Qcal$. Here, an IPr exists, the supremum in $\Grow$ is achieved, but strong duality fails.

\begin{example}\label{ex:singletonP-continued}
As in Example~\ref{ex:singletonP}, we consider 
 $\Omega=[0,1]$, equipped with the Borel sigma algebra,  $\P=\mathsf U$, and the probability measure $\Q_0$ given by 
 $
{d\Q_0}/{d\P}(\omega)=2\omega.
$
Let
\[
\mathcal I_\infty
=
\{\Q\ll \P : H(\Q\mid \P)=\infty\},
\]
and define
\[
\Qcal
=
\{\Q_0\}
\cup
\bigl\{
(1-t)\Q_0+t\RR: t \in (0,1], \RR\in\mathcal I_\infty\}.
\]
Note that $\mathcal I_\infty$ is convex, since positive mixtures of elements of $\mathcal I_\infty$ remain in $\mathcal I_\infty$: if $H(\Q\mid\P)=\infty$, then  $H((1-t)\RR+t\Q\mid\P)=\infty$ for every $\RR\ll\P$ and $t\in(0,1]$. Thus $\Qcal$ is also convex.
By the same token, the only finite-entropy element of $\Qcal$ is $\Q_0$, so
\begin{align} \label{eq:260603.1}
\inf_{\Q\in\Qcal} H(\Q\mid \P)
=
H(\Q_0\mid \P)
=
\log 2-\frac12.
\end{align}
Moreover, $\Q^{\mathrm{GIPr}}=\Q_0$. 
Indeed, the Pythagorean inequality \eqref{eq:pythagorean} holds with equality at $\Q_0$, while for every other element of $\Qcal$ the left-hand side is infinite.

Now, let $X$ be any e-variable for $\P$. Define
\[
A=\{\omega:X(\omega)\le 1\}.
\]
As in Example~\ref{ex:singletonP} we have $\P(A)>0$.
Since $\P$ is nonatomic, there exists a probability measure $\Q_A\ll \P$ supported on $A$ such that $
H(\Q_A\mid \P)=\infty$. 
Consequently $\Q_A\in\Qcal$. Since $X\le 1$ $\Q_A$-almost surely,
\(
\E_{\Q_A}[\log X]
\le 0,
\)
and therefore
\[
\inf_{\Q\in\Qcal}\E_\Q[\log X]
\le
\E_{\Q_A}[\log X]
\le 0.
\]
Since this holds for every e-variable $X$,
\(
\sup_{X\in\Ecal}
\inf_{\Q\in\Qcal}
\E_\Q[\log X]
\le 0.
\)
On the other hand, choosing $X\equiv 1$ yields
\(
\inf_{\Q\in\Qcal}\E_\Q[\log X]
=
0.
\)
Hence
\[
\sup_{X\in\Ecal}
\inf_{\Q\in\Qcal}
\E_\Q[\log X]
=
0.
\]
Combining this with \eqref{eq:260603.1},
\[
\sup_{X\in\Ecal}
\inf_{\Q\in\Qcal}
\E_\Q[\log X]
=
0
<
\log 2-\frac12
=
\inf_{\Q\in\Qcal}
H(\Q\mid \P).
\]
\end{example}

The above example violates strong duality in Theorem~\ref{thm:singleton-P}, but (as expected) the strong duality in Theorem~\ref{T_strong_duality} still holds. 
To see this, first note that
\(
\sup_{X\in\Ecal}
\inf_{\Q\in\Qcal}
\E_\Q[\log X]
=
0
\)
implies immediately that \(
\sup_{X\in\Ecal_b}
\inf_{\Q\in\Qcal}
\E_\Q[\log X]
=
0
\), both achieved by $X\equiv1$.
But the weak-$*$ closure $\cconv^*(\Qcal)$ contains $\P$, so the right-hand side of Theorem~\ref{T_strong_duality} also equals zero.

\subsection{Strong duality for singleton $\Pcal=\{\P\}$ holds, but $\Grow > H(\Q^* \mid \P)$ for GIPr $\Q^*$}\label{subsec:csiszar}

We now revisit Example~3.2 from~\cite{csiszar1984sanov}, which makes the point that 
$\inf_{\Q\in\Qcal}H(\Q\mid \P) > H(\Q^* \mid \P)$ for the GIPr $\Q^*$. We now complete the duality picture for this example. We show that strong duality for $\Grow$ holds,
but despite having a singleton $\Pcal=\{\P\}$, $\Grow$ may not equal $H(\Q \mid \P)$ for any $\Q \in \Qcal$, nor equal $H(\Q^* \mid \P)$. 

Example~\ref{eg:jipr-definitions} already shows that it is possible to satisfy conditions (i) and (ii) in the JIPr Definition~\ref{def:JIPr} without satisfying condition (iii). The following example goes one step further. It shows that even if Assumption~\ref{ass:singleton-P} is satisfied, it is possible to satisfy conditions (i) and (ii) in the JIPr definition~\ref{def:JIPr} without satisfying condition (iii).

\begin{example}\label{eg:csiszar}
Let $\Omega = [0, \infty)$, equipped with its Borel sigma algebra.
    Let $$\Qcal_a = \{\Q \in \Mcal_1: \int s d\Q(s)  \geq a\}$$ for some $a > 2/3$, which is convex.  
    Let $\P$ be defined by the density $p(s) = \exp(-s)(s+4)/(s+1)^4$ for $s\geq 0$, which has mean about $0.298 < 2/3$, and thus lies outside $\Qcal_a$. Csisz\'ar proves that the GIPr  of $\P$ onto $\Qcal_a$ is a distribution $\Q^*$ with density $q^*(s) = \frac23 (s+4)/(s+1)^4$  for $s\geq 0$, which remarkably does not depend on $a$, and its mean is $2/3$, so  $\Q^* \notin \Qcal_a$. We have the following facts.
    \begin{enumerate}
        \item For any $\Q\in \Qcal_a$ with $\Q\ll \Q^*$, 
        the chain rule gives \[  H(\Q\mid \P) = H(\Q\mid \Q^*)+ \E_\Q\left[\log\frac{d\Q^*}{d\P}\right] = H(\Q\mid \Q^*)+ \int s d \Q(s) +\log\frac23 \ge a+\log\frac23.  \] If $\Q\not\ll \Q^*$, since $\Q^* \sim \P$, we have $H(\Q\mid \P)=\infty$, and the same lower bound is trivial. Put together, 
         \( \inf_{\Q\in\Qcal_a}H(\Q\mid \P) \ge a+\log\frac23. \) In fact, equality holds, as justified in Appendix~\ref{app:sec5}.
        \item 
Since
\[
    \frac{q^*(s)}{p(s)}
    =
    \frac{\frac23 (s+4)/(s+1)^4}
         {e^{-s}(s+4)/(s+1)^4}
    =
    \frac23 e^s,
    \qquad s\geq 0,
\]
we choose the Borel version of the Radon--Nikodym derivative
\(d\Q^*/d\P\) given by
$
    X^*(s)=\frac23 e^s$.
 Then
$
    X^*=\frac{d\Q^*}{d\P}$, $\P\text{-a.s.}$, hence \(X^*\in\Ecal\).
 For every $\Q \in \Qcal_a$, $$\E_\Q[\log X^*] = \int s d \Q(s)+\log\frac23 \ge a+\log\frac23 = \inf_{\RR\in\Qcal_a}H(\RR\mid\P) .$$ 
 Thus,  Assumption~\ref{ass:singleton-P} is satisfied, and, by Theorem~\ref{thm:singleton-P}, strong duality holds and $X^*$ is GROW optimal.
        
        \item $H(\Q^* \mid \P) =2/3 - \log(3/2) < a + \log(2/3)$.
    \end{enumerate}
    To summarize, we have 
    \[
a + \log\frac{2}{3}= \sup_{X\in\mathcal{E}}\inf_{\Q\in\mathcal{Q}_a} \E_{\Q}[\log X] = \inf_{\Q \in \Qcal_a} H(\Q \mid \P)   > H(\Q^* \mid \P),
    \]
    meaning that strong duality holds, and the left-hand side supremum is achieved by the e-variable $X^* = d\Q^*/d\P$. Furthermore,  $\inf_{\Q\in\mathcal{Q}_a} \E_{\Q}[\log X^*]$ is not achieved by $\Q^*$, but it is achieved by any $\Q \in \Qcal_a$ with mean equal to $a$.
    Finally, $(\P,\Q^*)$ also forms a JIPr pair satisfying conditions (i),(ii) but not condition (iii) in Definition~\ref{def:JIPr}. 
\end{example}

\subsection{Gaussian mean shift with heavy-tailed outliers}

We now formalize and generalize Remark~\ref{rem:mixture-epower}, which allows us to extend Example~\ref{eg:jipr-likelihoodratio-fail} to a more statistically meaningful context.

\begin{theorem}[Shielding theorem] \label{T:shielding}
Suppose that
$\Qcal$ is partitioned as
\(
\Qcal=\Qcal_I\cup \Qcal_\infty,
\)
where $\Qcal_I$ is nonempty. Assume that strong duality holds over
$\Qcal_I$ with a finite value, i.e.,
\begin{equation*}
I
=
\sup_{X\in\Ecal}
\inf_{\Q\in\Qcal_I}\E_\Q[\log X]
=
\inf_{\Q\in\Qcal_I}
\inf_{\P\in\Peff}H(\Q \mid \P)
<\infty.
\end{equation*}
Assume further that there exists $Y\in\Ecal$ such that
\begin{equation}\label{eq:shield-assumption}
\E_\Q[\log Y]=\infty
\qquad
\text{for every }\Q\in\Qcal_\infty.
\end{equation}
Then strong duality holds over the full alternative $\Qcal$, i.e.,
\begin{equation}\label{eq:shielded-strong-duality}
I = \sup_{X\in\Ecal}
\inf_{\Q\in\Qcal}\E_\Q[\log X]
=
\inf_{\Q\in\Qcal}
\inf_{\P\in\Peff}H(\Q\mid \P).
\end{equation}
\end{theorem}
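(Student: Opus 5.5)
The proof sandwiches the target quantity between $I$ and itself. The upper bound comes from weak duality. The lower bound comes from the mixing construction of Remark~\ref{rem:mixture-epower}, applied to near-optimal e-variables for $\Qcal_I$ rather than to an exact optimizer.

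\emph{Upper bound.} By the weak duality chain \eqref{weak_duality_general} together with \eqref{eq:numeraire-duality},
\[
\sup_{X\in\Ecal}\inf_{\Q\in\Qcal}\E_\Q[\log X]\le \inf_{\Q\in\Qcal}\sup_{X\in\Ecal}\E_\Q[\log X]=\inf_{\Q\in\Qcal}\inf_{\P\in\Peff}H(\Q\mid\P).
\]
Since $\Qcal_I\subseteq\Qcal$, the right-hand side is at most $\inf_{\Q\in\Qcal_I}\inf_{\P\in\Peff}H(\Q\mid\P)=I$. Note that convexity of $\Qcal$ is not needed here: we only use the inequality $\sup\inf\le\inf\sup$ over $\Qcal$ itself, not over its convex hull.

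\emph{Lower bound.} Fix $\eta>0$. Because $I<\infty$ and $I$ equals the supremum over $\Ecal$, we can choose $X_\eta\in\Ecal$ with $\inf_{\Q\in\Qcal_I}\E_\Q[\log X_\eta]\ge I-\eta$. If $I=-\infty$ the lower bound is trivial, so we may assume $I$ is finite. For $\delta\in(0,1)$ define
\[
X_{\eta,\delta}=(1-\delta)X_\eta+\delta Y.
\]
Since $\Ecal$ is convex, $X_{\eta,\delta}\in\Ecal$. We then treat the two parts of the partition separately.
\begin{itemize}
\item For $\Q\in\Qcal_I$, the bound $X_{\eta,\delta}\ge(1-\delta)X_\eta$ and monotonicity of the extended expectation give $\E_\Q[\log X_{\eta,\delta}]\ge \log(1-\delta)+\E_\Q[\log X_\eta]\ge I-\eta+\log(1-\delta)$.
\item For $\Q\in\Qcal_\infty$, the bound $X_{\eta,\delta}\ge\delta Y$ together with \eqref{eq:shield-assumption} gives $\E_\Q[\log X_{\eta,\delta}]\ge\log\delta+\E_\Q[\log Y]=\infty$.
\end{itemize}
Taking the infimum over $\Q\in\Qcal$ yields $\Grow\ge I-\eta+\log(1-\delta)$. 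Letting $\eta,\delta\downarrow0$ gives $\Grow\ge I$, and combining with the upper bound yields \eqref{eq:shielded-strong-duality}.

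The only step needing care is the monotonicity claim $\E_\Q[\log X]\ge c+\E_\Q[\log Z]$ whenever $X\ge e^cZ$, under the paper's conventions for $\E_\Q[\log\cdot]$ with possibly infinite positive or negative parts. If $\E_\Q[\log Z]=-\infty$ there is nothing to prove. Otherwise $(\log X)^-\le(\log Z+c)^-$ and $(\log X)^+\ge(\log Z+c)^+$ pointwise, which gives the claim directly. In the $\Qcal_\infty$ case this shows that the negative part of $\log X_{\eta,\delta}$ is integrable and its positive part has infinite integral, so the expectation is indeed $+\infty$.
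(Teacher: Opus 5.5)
Your proof is correct and follows the paper's argument. The lower bound is the same mixture $(1-\delta)X_\eta+\delta Y$ built from a near-optimal $X_\eta$ for $\Qcal_I$. The only difference is on the dual side. The paper uses the shield together with \eqref{eq:numeraire-duality} to show $\inf_{\P\in\Peff}H(\Q\mid\P)=\infty$ for every $\Q\in\Qcal_\infty$, so the right-hand side equals $I$ directly. You instead obtain the same identification by sandwiching with weak duality over $\Qcal$, which is equally valid.
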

Appendix~\ref{app:sec5} provides the proof of Theorem~\ref{T:shielding}.

\begin{example}[Gaussian mean shift with heavy-tailed outliers]\label{ex:260621}
Let $\Omega=\mathbb R$ and let 
\(
\Pcal=\{\P\}\) with \(
\P=N(0,1)
\).
Fix $\theta>0$ and let
\(
\Q_0=N(\theta,1).
\)
The likelihood ratio of $\Q_0$ with respect to $\P$ is
\(
L(\omega)=
\exp\left(\theta \omega-\frac{\theta^2}{2}\right),
\)
which is an e-variable with
\(
\E_{\Q_0}[\log L]
= H(\Q_0\mid \P)
= \frac{\theta^2}{2}.
\)
Thus strong duality holds for the singleton alternative
\(
\Qcal_I=\{\Q_0\},
\)
with value
\(
I=\frac{\theta^2}{2}
\).
Now let $\RR$ be a centered Student-$t_\kappa$ distribution with
$1<\kappa<2$.  For $t\in[0,1]$,
define
\(
\Q_t=(1-t)\Q_0+t\RR,
\)
and set
\[
\Qcal
=
\{\Q_t:0\le t\le1\},
\qquad
\Qcal_\infty
=
\{\Q_t:0<t\le1\}.
\]
This gives the partition
\(
\Qcal=\Qcal_I\cup\Qcal_\infty,
\)
and we note that $\Qcal$ is both setwise and weakly compact, so Theorem~\ref{thm:setwise} already gives strong duality, but we show below that it is not attained by any e-variable, and shielding helps verify this strong duality.

Choose now $a\in(0,1/2)$ and define the random variable
\[
Y_a(\omega)
=
\sqrt{1-2a}\exp(a\omega^2).
\]
Since 
\(
\E_\P[Y_a]=1
\),
we get $Y_a \in \Ecal$. Moreover,
$
\log Y_a(\omega)
=
\frac12\log(1-2a)+a\omega^2$.
Therefore, for every $t>0$,
\[
\E_{\Q_t}[\log Y_a]
=
(1-t)\E_{\Q_0}[\log Y_a]+t \E_\RR[\log Y_a]
=
\infty,
\]
because $\RR$ has infinite second moment. Thus $Y_a$ is a shielding e-variable for
$\Qcal_\infty$.
By Theorem~\ref{T:shielding}, strong duality holds:
\[
\frac{\theta^2}{2} = \sup_{X\in\Ecal}
\inf_{\Q\in\Qcal}\E_\Q[\log X]
=
\inf_{\Q\in\Qcal}H(\Q\mid \P).
\]
Proposition~\ref{prop:version-of-LR} implies that this strong duality is not attained by any e-variable. Indeed, if it was obtained, such e-variable would have to equal $d\Q_0/d\P$, $\P$-a.s., and hence also $\RR$-a.s., but $\E_\RR[\log d\Q_0/d\P] = \E_\RR[\theta Z - \theta^2/2] = -\theta^2/2 < I$, where $Z(\omega) = \omega$.
\end{example}

\subsection{Finite bounded moment inequalities}\label{SS:moments}

We next present an example of a natural class that was recently studied by~\cite{larsson2026testing}; there it served the role of $\Pcal$, and below it serves the role of $\Qcal$ in Theorem~\ref{thm:singleton-P}.

\begin{example}
\label{ex:finite-moment-inequalities}
Let $\P\in \mathcal M_1$, let $f_1,\ldots,f_K\in \Bb$, and define
\[
    \Qcal
    =
    \left\{
        \Q\in \Mcal_1:
        \E_\Q[f_k]\le0,\quad k=1,\ldots,K
    \right\}.
\]
I-projections under finitely many linear inequality constraints were
studied by \cite{dykstraWollan1987}; see also \cite{bhattacharyaDykstra1995}.
Assume the strict-feasibility condition that there exists
$\RR \in\Qcal$ such that $\RR \ll \P$ and
\[
    H(\RR\mid \P)<\infty,
    \qquad
    \E_{\RR}[f_k]<0,\quad k=1,\ldots,K.
\]
For example, this holds if there exist $A\in\mathcal F$ and $\eta>0$
such that
\[
    \P(A)>0,
    \qquad
    f_k\le-\eta\ \text{on }A,\quad k=1,\ldots,K,
\]
by taking $\RR=\P(\,\cdot\mid A)$.
Define
\[
    \psi(\lambda)
    =
    \log \E_\P\!\left[
        \exp\left(-\sum_{k=1}^K\lambda_kf_k\right)
    \right],
    \qquad
    \lambda\in[0,\infty)^K.
\]
Since the functions $f_k$ are bounded, $\psi$ is finite, convex, and continuously
differentiable. Moreover, since
\[
    \eta
    =
    \min_{1\le k\le K}(-\E_{\RR}[f_k])>0,
\]
 the Donsker--Varadhan inequality (Lemma~\ref{lem:extended-entropy-sec}) gives
\[
\begin{aligned}
    \psi(\lambda)
    &\ge
    -H(\RR\mid \P)
    -\sum_{k=1}^K\lambda_k\E_{\RR}[f_k] \ge
    -H(\RR\mid \P)+\eta\|\lambda\|_1.
\end{aligned}
\]
Thus $\psi$ attains its
minimum at some $\lambda^*\in[0,\infty)^K$.
Set
\[
    X^*
    =
    \exp\left(
        -\sum_{k=1}^K\lambda_k^*f_k-\psi(\lambda^*)
    \right)
\]
Then $\E_\P[X^*]=1$.  
Define next $\Q^*$ by 
    $\frac{d\Q^*}{d\P}=X^*$.
The first-order optimality conditions at $\lambda^*$ give
\[
    \E_{\Q^*}[f_k] = -\frac{\partial\psi}{\partial\lambda_k}(\lambda^*) \le0, 
        \qquad
    \lambda_k^*\E_{\Q^*}[f_k]=-\lambda_k^* \frac{\partial\psi}{\partial\lambda_k}(\lambda^*)= 0,
    \qquad k=1,\ldots,K.
\]

Hence $\Q^*\in\Qcal$ and 
\[
\begin{aligned}
    H(\Q^*\mid \P)
    &=
    -\sum_{k=1}^K\lambda_k^*\E_{\Q^*}[f_k]
    -\psi(\lambda^*) =
    -\psi(\lambda^*).
\end{aligned}
\]
For every $\Q\in\Qcal$,
\[
\begin{aligned}
    \E_\Q[\log X^*]
    &=
    -\sum_{k=1}^K\lambda_k^* \E_\Q[f_k]
    -\psi(\lambda^*) \ge
    -\psi(\lambda^*)
    =
    H(\Q^*\mid \P).
\end{aligned}
\]
Combining the above with weak duality, we get
\[
    H(\Q\mid \P)
    \ge
    \E_\Q[\log X^*]
    \ge
    H(\Q^*\mid \P),
    \qquad \Q\in\Qcal.
\]
Thus $\Q^*$ is the IPr of $\P$ onto $\Qcal$, and the
preceding display verifies Assumption~\ref{ass:singleton-P}. Consequently,
\[
    \Grow
    =
    \inf_{\Q\in\Qcal}H(\Q\mid \P)
    =
    H(\Q^*\mid \P)
    =
    -\psi(\lambda^*),
\]
and $X^*=d\Q^*/d\P$ is the GROW e-variable.
\end{example}

\subsection{$\Grow_{b} > \Grow_{bc}$ can hold in general}

In the context of Subsection~\ref{subsec:grow-weakly-compact}, $\Grow_b \geq \Grow_{bc}$ holds always. We provide an example where $\Grow_{b} > \Grow_{bc}$ in order to show that some assumptions on $\Qcal$ are needed for $\Grow_{b} = \Grow_{bc}$ to hold.

\begin{example}\label{ex:arbitrary-Q-counterexample}
Let $\Omega=[0,1]$ with its Borel sigma algebra, set $D=\mathbb Q\cap[0,1]$, and let $\Pcal=\{\mathsf U\}$ and $\Qcal=\{\delta_q:q\in D\}$. For $N\in \N$, define $X_N=N\1_D+\1_{D^c} \in \Ecal_b$. Then 
\[ \sup_{X\in\Ecal_b}\inf_{\Q\in\Qcal} \E_\Q[\log X] =\infty. \]

On the other hand, let $X\in\Ecal_{bc}$. Then $X$ is continuous, nonnegative, and satisfies $\mathbb E_{\mathsf U}[X]\le1$. Therefore $\inf_{\omega\in[0,1]}X(\omega)\le1$. Since $D$ is dense in $[0,1]$ and $X$ is continuous, \[ \inf_{q\in D}X(q) = \inf_{\omega\in[0,1]}X(\omega) \le1. \] Consequently, \[ \inf_{\Q\in\Qcal}\E_\Q[\log X] = \inf_{q\in D}\log X(q) \le0. \] Since the constant function $X\equiv1$ belongs to $\Ecal_{bc}$, we obtain \[ \sup_{X\in\Ecal_{bc}}\inf_{\Q\in\Qcal}\E_\Q[\log X] =0. \] Thus $\Grow_b>\Grow_{bc}$ in general.
\end{example}

\section{Summary}

Theorem~\ref{T_strong_duality} establishes strong duality for the minimax e-power of bounded e-variables, equivalently for the 
\emph{bounded} GROW value $\Grow_b$. It shows that $\Grow_b$ equals the minimum relative entropy between the weak-$*$ closed convex hulls of $\Pcal$ and $\Qcal$, without imposing any restrictions on either family. We also extend this result to the REGROW criterion for arbitrary bounded offsets. We then prove several strong duality results for the \emph{unbounded} GROW value $\Grow$.  Theorem~\ref{thm:singleton-P} treats the case of a singleton null $\Pcal$, improving on previous work. Theorem~\ref{T:260608} extends this result to the case that a joint information projection exists. We then prove strong duality results for unbounded GROW under assumptions such as setwise or weak compactness, that do not require taking weak-$*$ closures. It remains open to what extent such restrictions are necessary. While our examples show that $\Grow$ can be strictly larger than $\Grow_b$, we do not know whether a universal strong duality theorem for $\Grow$ holds for arbitrary $\Pcal$ and $\Qcal$. Indeed, Examples~\ref{ex:singletonP} and \ref{ex:singletonP-continued} demonstrate that a naive entropy duality does not hold without assumptions; it remains open whether some alternative universal dual representation for unbounded GROW exists.

\subsection*{Acknowledgments and Disclosures}
AR acknowledges support from the National Science Foundation under grant DMS-2310718.
ML acknowledges support from the National Science Foundation under grant
NSF DMS-2510965. 
% ChatGPT was used to simplify some proofs and construct some examples, but the authors take responsibility for the correctness of all claims in the paper.

\bibliography{references}
\bibliographystyle{plainnat}

\appendix

\section{Proofs for Section~\ref{sec:prelim}}\label{sec:missing-proofs}

\begin{proof}[Proof of Lemma~\ref{lem:weak-star-jipr-zero-iff-equal}]
The implication $\mu=\nu \Rightarrow H(\nu\mid\mu)=0$ is immediate from the
definition. Conversely, suppose that $H(\nu\mid\mu)=0$ and consider some
 $A\in\mathcal F$, along  with the partition $\pi=\{A,A^c\}$.
Then the finite-dimensional entropy
\[
H_\pi(\nu\mid\mu)
=
\sum_{A\in\pi}\nu(A)\log\frac{\nu(A)}{\mu(A)}
\]
is nonnegative, since it is the relative entropy between the two probability
vectors $(\nu(A), \nu(A^c))$ and $(\mu(A), \mu(A^c))$. Hence,
we must have $H_\pi(\nu\mid\mu)=0$.
The finite-dimensional relative entropy of two probability vectors is zero if
and only if the two vectors are equal. Therefore
\[
\nu(A)=\mu(A)
\qquad\text{and}\qquad
\nu(A^c)=\mu(A^c).
\]
Since $A\in\mathcal F$ was arbitrary, it follows that $\nu=\mu$.
\end{proof}

\begin{proof}[Proof of Lemma~\ref{lem:extended-entropy-sec}] 
Write
\[
F(f)=\int f\,d\nu-\log\int e^f\,d\mu,
\qquad f\in\Bb.
\]

We first show that \(F(f)\le H(\nu\mid\mu)\) for every \(f\in\Bb\).
Suppose first that \(f\) is simple, say
$
f=\sum_{i=1}^n c_i 1_{A_i}
$
for some finite measurable partition \(\pi=\{A_1,\dots,A_n\}\). Set
\[
q_i=\nu(A_i),\qquad p_i=\mu(A_i),\qquad \kappa=\sum_{i=1}^n p_i e^{c_i}.
\]
If \(q_i>0\) and \(p_i=0\) for some \(i\), then \(H_\pi(\nu\mid\mu)=\infty\), so there is nothing to prove. Otherwise $\kappa>0$ and we define
$
r_i={p_i e^{c_i}}/{\kappa}
$.
Then \((r_i)\) is a probability vector, and recalling convention \eqref{eq_q_log_q_by_p_convention},
\[
F(f) = \sum_{i=1}^n q_i (c_i-\log \kappa)
=
\sum_{i=1}^n q_i \log\frac{r_i}{p_i}
=
\sum_{i=1}^n q_i \log\frac{q_i}{p_i}
-
\sum_{i=1}^n q_i \log\frac{q_i}{r_i}.
\]
Since
$
\sum_{i=1}^n q_i \log({q_i}/{r_i})\ge 0$,
it follows that
\[
F(f)\le \sum_{i=1}^n q_i \log\frac{q_i}{p_i}=H_\pi(\nu\mid\mu)\le H(\nu\mid\mu).
\]

Now let \(f\in\Bb\) be arbitrary. Choose simple functions \(f_m\in\Bb\) such that
$
\|f_m-f\|_\infty\to 0$.
Then
\[
\left|\int f_m\,d\nu-\int f\,d\nu\right|\le \|f_m-f\|_\infty\to 0.
\]
Also, if \(\varepsilon_m=\|f_m-f\|_\infty\), then
\[
e^{-\varepsilon_m}\int e^{f_m}\,d\mu
\le
\int e^f\,d\mu
\le
e^{\varepsilon_m}\int e^{f_m}\,d\mu,
\]
so
\[
\left|\log\int e^f\,d\mu-\log\int e^{f_m}\,d\mu\right|
\le \varepsilon_m\to 0.
\]
Therefore \(F(f_m)\to F(f)\). Since \(F(f_m)\le H(\nu\mid\mu)\) for every \(m\), we conclude that
\[
\sup_{f\in\Bb}F(f)\le H(\nu\mid\mu).
\]

For the reverse inequality, fix a finite measurable partition
$
\pi=\{A_1,\dots,A_n\}$, 
and again write
$
q_i=\nu(A_i)$ and $p_i=\mu(A_i)$.
If \(q_i>0\) and \(p_i=0\) for some \(i\), then \(H_\pi(\nu\mid\mu)=\infty\). In that case, letting
$
f_m=m\, \1_{A_i}$, 
we obtain
\[
F(f_m)\ge m q_i-\log\int e^{f_m}\,d\mu = m q_i - \log \mu(A_i^c) \uparrow \infty.
\]
 Hence
$
\sup_{f\in\Bb}F(f)=\infty=H_\pi(\nu\mid\mu)$.
Now assume instead that \(q_i=0\) whenever \(p_i=0\). For \(m\in\mathbb N\), define
\[
c_i^{(m)}=
\begin{cases}
\log(q_i/p_i), & q_i>0,\\
-m, & q_i=0,
\end{cases}
\qquad
f_m=\sum_{i=1}^n c_i^{(m)} \1_{A_i}\in\Bb.
\]
Then
\[
\int f_m\,d\nu=\sum_{q_i>0} q_i\log\frac{q_i}{p_i},
\]
while
\[
\int e^{f_m}\,d\mu
=
\sum_{q_i>0} p_i\frac{q_i}{p_i}
+
\sum_{q_i=0} p_i e^{-m}
=
1+e^{-m}\sum_{q_i=0}p_i.
\]
Therefore
\[
F(f_m)
=
\sum_{q_i>0} q_i\log\frac{q_i}{p_i}
-
\log\left(1+e^{-m}\sum_{q_i=0}p_i\right)
\to
\sum_{i=1}^n q_i\log\frac{q_i}{p_i}=H_\pi(\nu\mid\mu).
\]
Hence
$
H_\pi(\nu\mid\mu)\le \sup_{f\in\Bb}F(f)$.
Since \(\pi\) was arbitrary, taking the supremum over all finite measurable partitions gives
$
H(\nu\mid\mu)\le \sup_{f\in\Bb}F(f)$, concluding the proof of the first part.

For the second part, if \(\int f^-\,d\nu=\infty\), the claim follows from our convention for extended integrals. Otherwise,
set \(f_n=(f\vee(-n))\wedge n\in \Bb\). From the first part of the lemma, we then have
\[
\int f_n\,d\nu-\log\int e^{f_n}\,d\mu\leq H(\nu\mid \mu).
\]
By monotone convergence applied separately to the positive and negative
parts, \(\int f_n\,d\nu\to\int f\,d\nu\). Moreover,
\[
e^f\wedge e^n\leq e^{f_n}\leq e^f\wedge e^n+e^{-n},
\]
and hence \(\int e^{f_n}\,d\mu\to\int e^f\,d\mu\). Letting \(n\uparrow\infty\)
proves the claim.
\end{proof}

\begin{proof}[Proof of Proposition~\ref{prop:ba-countably-additive-reduction}]
Fix $\mu\in\ba_+$ and let
$\mu=\mu_c+\mu_p$ denote its Yosida--Hewitt decomposition. Since $\mu_c\le \mu$, we immediately have
$
H(\Q\mid\mu)\le H(\Q\mid\mu_c)$. 

We next prove the reverse inequality. Let $
\RR=\Q+\mu_c$.
Since $\RR$ is countably additive and $\mu_p$ is purely finitely additive,
there is no nonzero positive finitely additive measure dominated by both
$\mu_p$ and $\RR$. Equivalently, 
we may choose a sequence $(B_n)\subseteq\mathcal F$ such that
\[
\mu_p(B_n)+\RR(B_n^c)\to 0. 
\]
 Fix now a finite measurable partition
$\pi=\{A_1,\ldots,A_m\}$. For each $n$, consider the finite measurable partition
$
\pi_n
=
\{A_1\cap B_n,\ldots,A_m\cap B_n,B_n^c\}$. 
Then
$
H(\Q\mid\mu)\ge H_{\pi_n}(\Q\mid\mu)$.
For each $i=1,\ldots,m$, we have
$
\Q(A_i\cap B_n)\to \Q(A_i)$,
and
\[
\mu(A_i\cap B_n)
=
\mu_c(A_i\cap B_n)+\mu_p(A_i\cap B_n)
\to
\mu_c(A_i).
\]
 Hence, using \eqref{eq_q_log_q_by_p_convention}, which implies the lower semicontinuity of
$q\log(q/p)$, yields
\[
\liminf_{n\uparrow\infty}
\sum_{i=1}^m
\Q(A_i\cap B_n) 
\log\frac{\Q(A_i\cap B_n)}{\mu(A_i\cap B_n)}
\ge
\sum_{i=1}^m
\Q(A_i)\log\frac{\Q(A_i)}{\mu_c(A_i)}
=
H_\pi(\Q\mid\mu_c).
\]
The remaining atom $B_n^c$ does not affect the lower bound, since
$\Q(B_n^c)\to0$ and
$
\Q(B_n^c)\log({\Q(B_n^c)}/{\mu(B_n^c)})$
has nonnegative limit inferior.
Therefore,
\[
H(\Q\mid\mu)
\ge
\liminf_{n\uparrow\infty} H_{\pi_n}(\Q\mid\mu)
\ge
H_\pi(\Q\mid\mu_c).
\]
Taking the supremum over all finite measurable partitions $\pi$ yields
$
H(\Q\mid\mu)\ge H(\Q\mid\mu_c)$, 
hence the first claim is established.

Now consider $\nu\in\ba_1 \setminus \Mone$. Then continuity from above fails for $\nu$. Hence there exists a nonincreasing sequence $(A_n)\subseteq\mathcal F$ such that
$
A_n\downarrow\varnothing
$
but there exists some $\delta>0$ such that
$\nu(A_n) \geq \delta$ for all $n$. 
On the other hand, since $\P\in\Mplus$ is countably additive and finite, we have
$
\P(A_n)\downarrow0$.
Consider the two-point partitions $\pi_n=\{A_n,A_n^c\}$. Then
\[
H(\nu\mid\P)
\ge
H_{\pi_n}(\nu\mid\P)
\geq
\nu(A_n)\log\frac{\nu(A_n)}{\P(A_n)}
+
\nu(A_n^c)\log\frac{\nu(A_n^c)}{\P(\Omega)}.
\]
The first term now tends to $\infty$, whereas the second term is bounded from below.
This yields the claim.
\end{proof}

\begin{proof}[Proof of Lemma~\ref{lem:E-to-Eb}]
Since $\Ecal_b\subseteq\Ecal$, we only have to argue
$
\sup_{Y\in\Ecal_b}\E_\Q[\log Y]
\geq
\E_\Q[\log X]$ for all $X \in \Ecal$ such that $\E_\Q[(\log X)^-] < \infty$.
We fix such an $X$ and define $X_m=X\wedge m \in \Ecal_b$ for
$m \in\mathbb N$. 
Then by monotone convergence, 
$
\E_\Q[\log X_m]
\uparrow
\E_\Q[\log X]$.
This yields the statement.
\end{proof}

\section{Details of Example~\ref{ex:weak-star-jipr-necessarily-finitely-additive}}\label{app:example}

We first show that the weak-$*$ closed convex hulls of $\Pcal$ and
$\Qcal$ have a common finitely additive element. Since $\ba_1$ is
weak-$*$ compact, the sequence $\P_n$ has a weak-$*$
convergent subnet $\P_{n_\alpha}$, say with limit
$\mu\in\ba_1$. Since $n_\alpha$ is a subnet of the sequence of
integers, we have $n_\alpha\uparrow\infty$. For any $f\in\Bb$,
\[
\int f\,d\Q_{n_\alpha}-\int f\,d\P_{n_\alpha}
=
\frac{1}{n_\alpha}
\sum_{k=2}^{n_\alpha+1} f(k)
-
\frac{1}{n_\alpha}
\sum_{k=1}^{n_\alpha} f(k)
=
\frac{f(n_\alpha+1)-f(1)}{n_\alpha}.
\]
Thus
\[
\left|
\int f\,d\Q_{n_\alpha}-\int f\,d\P_{n_\alpha}
\right|
\le
\frac{2\|f\|_\infty}{n_\alpha}
\downarrow 0.
\]
Since $\P_{n_\alpha}\to\mu$ in $\sigma(\ba,\Bb)$, it follows that also
$\Q_{n_\alpha}\to\mu$ in $\sigma(\ba,\Bb)$. Hence $
\mu\in\cconv^*(\Pcal)\cap\cconv^*(\Qcal)$.

We next show that $\mu$ is not countably additive. Fix $m\in\mathbb N$. Since
$n_\alpha\uparrow\infty$, we have $n_\alpha\ge m$ eventually, and hence
$
\P_{n_\alpha}(\{m\})=\frac{1}{n_\alpha}$
eventually. Therefore
$
\mu(\{m\})
=
\lim_\alpha \P_{n_\alpha}(\{m\})
=
0$.
If $\mu$ were countably additive, then
\[
1=\mu(\mathbb N)=\sum_{m=1}^\infty \mu(\{m\})=0,
\]
a contradiction. Thus $\mu\notin\Mone$. 

Lemma~\ref{lem:weak-star-jipr-zero-iff-equal}
yields
$
H(\mu\mid\mu)
=
0$.
Since
$\mu\in\cconv^*(\Pcal)\cap\cconv^*(\Qcal)$ and since relative entropy is nonnegative, we conclude that
\begin{equation} \label{eq:example-weak-star-value-zero}
\min_{\nu\in \cconv^*(\Qcal)}\min_{\lambda\in \cconv^*(\Pcal)}
H(\nu\mid\lambda)
=
0.
\end{equation}
Let $(\nu^*,\lambda^*)$ be now any minimizing pair in \eqref{eq:example-weak-star-value-zero}.
By Lemma~\ref{lem:weak-star-jipr-zero-iff-equal}, this implies
$\nu^*=\lambda^*$. Hence every minimizing pair is of the form
$(\lambda^*,\lambda^*)$, with
$
\lambda^*\in\cconv^*(\Pcal)\cap\cconv^*(\Qcal)$.
It remains to show that this intersection contains only purely finitely additive probability measures.

Indeed, consider any
$
\lambda\in\cconv^*(\Pcal)\cap\cconv^*(\Qcal)$.
For every $n,k\in\mathbb N$, we have
$
\P_n(\{k\})\ge \P_n(\{k+1\})$.
This inequality is preserved under finite convex combinations and under
weak-$*$ limits because evaluation at $\{k\}$ and $\{k+1\}$ is weak-$*$ continuous. Hence, for every $k\in\mathbb N$,
$
\lambda(\{k\})\ge \lambda(\{k+1\})$.
On the other hand, every $\Q_n$ satisfies $\Q_n(\{1\})=0$, and this property is
also preserved under finite convex combinations and weak-$*$ limits. Since
$\lambda\in\cconv^*(\Qcal)$, it follows that
$
\lambda(\{1\})=0$.
Therefore,
\[
0\le \lambda(\{k\})\le \lambda(\{1\})=0,
\qquad k\in\mathbb N,
\]
and hence $\lambda(\{k\})=0$ for every $k$. 
Let now
$\lambda=\lambda_c+\lambda_p$ be the Yosida--Hewitt decomposition of
$\lambda$ into its countably additive and purely finitely additive parts.
Since $\lambda_c\le\lambda$, we have $\lambda_c(\{k\})=0$ for every
$k$. Countable additivity of $\lambda_c$ then yields 
\[
\lambda_c(\mathbb N)=\sum_{k=1}^\infty \lambda_c(\{k\})=0.
\]
Hence $\lambda_c=0$, and therefore $\lambda$ is purely finitely additive.

\section{Details from Section~\ref{sec:grow}}\label{app:bounded-second-moment}

Example~\ref{eg:bounded-mean} presented a simple example of $\Pcal,\Qcal$ that are convex and weakly compact, and showed how to apply this paper's techniques to derive strong duality, along with the GROW e-variable. We now do the same for a slightly more sophisticated example, which involves mean shifts of unbounded random variables with bounded second moment.

\begin{example}[Mean shift with second-moment constraints and no domination]
Let $\Omega=\mathbb R$ and $Z(\omega)=\omega$, and define
\[
    \Pcal
    =
    \left\{
    \P\in \Mone:
    \E_\P[Z]\le0,\ \E_\P[Z^2]\le1
    \right\},
\]
and
\[
    \Qcal
    =
    \left\{
    \Q\in \Mone:
    \E_\Q[Z]\ge1,\ \E_\Q[Z^2]\le2
    \right\}.
\]
Both classes are convex and weakly compact. Indeed, the second-moment
bounds imply tightness, while the moment constraints are closed under
weak convergence because the second moments are uniformly bounded and because of the Portmanteau theorem.
The classes are not dominated by any common finite reference measure,
since $\Pcal$ contains $\delta_\omega$ for every $\omega\in[-1,0]$ and
$\Qcal$ contains $\delta_\omega$ for every $\omega\in[1,\sqrt2]$.

Let
\[
    \varphi=\frac{1+\sqrt5}{2},
    \qquad
    \omega_-=-\frac1\varphi,
    \qquad
    \omega_+=\varphi.
\]
Define
\[
    \P^*
    =
    \frac{\varphi^2}{\varphi^2+1}\delta_{\omega_-}
    +
    \frac{1}{\varphi^2+1}\delta_{\omega_+},
\]
and
\[
    \Q^*
    =
    \frac{1}{\varphi^2+1}\delta_{\omega_-}
    +
    \frac{\varphi^2}{\varphi^2+1}\delta_{\omega_+}.
\]
Then
\(
    \E_{\P^*}[Z]=0,
    \E_{\P^*}[Z^2]=1,
\)
so $\P^*\in\Pcal$, while
\(
    \E_{\Q^*}[Z]=1,
    \E_{\Q^*}[Z^2]=2,
\)
so $\Q^*\in\Qcal$.

On the support $\{\omega_-,\omega_+\}$, the likelihood ratio is
\[
    \frac{d\Q^*}{d\P^*}(\omega_-)=\frac1{\varphi^2},
    \qquad
    \frac{d\Q^*}{d\P^*}(\omega_+)=\varphi^2,
\]
yielding
$H(\Q^*\mid \P^*)=
    \frac{2\log\varphi}{\sqrt5}$.
We next choose a useful pointwise version of the likelihood ratio. 
\cite{larsson2026testing} indicate that all  e-variables can be dominated by one with a quadratic form, so below we choose the quadratic to match the value of $\frac{d\Q^*}{d\P^*}$ at $\omega_-$ and $\omega_+$.   
Let
\[
    A
    =
    \frac25\left(1+\frac{4\log\varphi}{\sqrt5}\right),
\]
and define
\[
    X^*(x)=A(1+x)+(1-A)x^2,
    \qquad x\in\mathbb R.
\]
Then $2/3<A<3/4$, and $X^*(x)> 0$ for all $x\in\mathbb R$. Moreover,
since $1+\varphi=\varphi^2$ and
$1-1/\varphi=1/\varphi^2$, we have
\[
    X^*(\omega_-)=\frac1{\varphi^2},
    \qquad
    X^*(\omega_+)=\varphi^2.
\]
Thus $X^*$ is a $\P^*$-version of $d\Q^*/d\P^*$, and 
\[
    \E_\P[X^*] = A (1+\E_\P[Z]) + (1-A) \E_\P[Z^2] \leq 1
\]
yields $X^* \in \Ecal$ and hence that $\P^*$ is the RIPr of $\Q^*$ onto $\Peff$ (by~\cite{larsson2025numeraire}).

We now show that Assumption~\ref{ass:composite-P} holds. First note that
\[
    \inf_{\RR\in\Qcal}H(\RR\mid \P^*)=H(\Q^*\mid \P^*).
\]
Indeed, since $\P^*$ is supported on $\{\omega_-,\omega_+\}$, any $\RR$ with
$H(\RR\mid \P^*)<\infty$ must also be supported on $\{\omega_-,\omega_+\}$. Writing
\[
    \RR =(1-q)\delta_{\omega_-}+q\delta_{\omega_+},
\]
the constraints $\E_\RR[Z]\ge1$ and $\E_\RR[Z^2]\le2$ force
\(
    q=\frac{\varphi^2}{\varphi^2+1},
\)
and hence $\RR=\Q^*$.  

It remains to check that
\(
    \E_\Q[\log X^*]\ge H(\Q^*\mid \P^*)
    \text{ for every }\Q\in\Qcal
\).
For the above choice of $A$, a direct one-dimensional calculus check shows
that with
\[
    c=\frac{A-2}{4} < 0, \qquad
    \lambda=4A-2>0,
    \qquad
    \beta=\frac32 A-1>0,
\]
we have
\[
    h(\omega)=c+\lambda \omega-\beta \omega^2
    \le
    \log X^*(\omega)
    \qquad
    \text{for all }\omega\in\mathbb R,
\]
with equality at $\omega_-$ and $\omega_+$. Therefore, for any $\Q\in\Qcal$,
\[
\begin{aligned}
    \E_\Q[\log X^*]
 \ge
    \E_\Q[h(Z)] =
    c+\lambda \E_\Q[Z]-\beta \E_\Q[Z^2] \ge
    c+\lambda-2\beta.
\end{aligned}
\]
Since $\Q^*$ has $\E_{\Q^*}[Z]=1$ and $\E_{\Q^*}[Z^2]=2$, and since
$h(Z)=\log(X^*)$ on the support of $\Q^*$,
\[
    c+\lambda-2\beta
    =
    \E_{\Q^*}[h(Z)]
    =
    \E_{\Q^*}[\log X^*]
    =
    H(\Q^*\mid \P^*).
\]
Thus
\[
    \E_\Q[\log X^*]\ge 
    \inf_{\RR\in\Qcal}H(\RR\mid \P^*)
    \qquad
    \text{for every }\Q\in\Qcal.
\]
This is Assumption~\ref{ass:composite-P}.
Consequently, the likelihood-ratio version $X^*$ witnesses strong duality:
\[
    \Grow 
    =
    \sup_{X\in\Ecal}
    \inf_{\Q\in\Qcal}\E_\Q[\log X]
    =
    H(\Q^*\mid \P^*)
    =
    \frac{2\log\varphi}{\sqrt5}.
\]
Clearly $X^*$ is also continuous, though not bounded, but Theorem~\ref{thm:nondominated-compact-convex} yields that $\Grow = \Grow_b = \Grow_{bb}  = \Grow_{bc}$ as well.
\end{example}

\section{Details from Section~\ref{sec:examples}}\label{app:sec5}

\begin{proof}[Derivation of \( \inf_{\Q\in\Qcal_a}H(\Q\mid \P)\) in Example~\ref{eg:csiszar}]
Let \( A_m=[m,\infty),  \alpha_m=\Q^*(A_m), \) and let $\RR_m$ be the conditional distribution of $\Q^*$ on $A_m$, i.e., \( \RR_m(\cdot)=\Q^*(\cdot\mid A_m). \) A direct calculation gives \( \alpha_m = \frac{1}{3(m+1)^2}+\frac{2}{3(m+1)^3}. \) 
Moreover,
\( M_m=\int s d \RR_m(s) \uparrow\infty. \) Indeed, 
\[
\frac{1}{\alpha_m} \int_m^\infty s\,q^*(s)\,ds = \frac2{3\alpha_m} \left( \frac{1}{m+1} + \frac{1}{(m+1)^2} - \frac{1}{(m+1)^3} \right)
\geq 2m. 
\] 
 For all large $m$, define \( \eta_m = \frac{a-\frac23}{M_m-\frac23} \in(0,1) \) and set \( \Q_m=(1-\eta_m)\Q^*+\eta_m \RR_m. \) 
        Then 
        \[ \int s d \Q_m(s) = (1-\eta_m)\frac23+\eta_m M_m = a, \]
        so $\Q_m\in\Qcal_a$. By convexity of relative entropy we get, 
        \[ 
        H(\Q_m\mid \Q^*) \le (1-\eta_m)H(\Q^*\mid \Q^*)+\eta_m H(\RR_m\mid \Q^*) = \eta_m\log\frac{1}{\alpha_m}
        \to 0. 
        \] 
        Therefore, 
        \[ 
        H(\Q_m\mid \P) = H(\Q_m\mid \Q^*)+ \E_{\Q_m}\left[\log\frac{d\Q^*}{d\P}\right] 
        = H(\Q_m\mid \Q^*)+ a+\log\frac23 \to a+\log\frac23. 
        \] 
        Hence \( \inf_{\Q\in\Qcal_a}H(\Q\mid \P) = a+\log\frac23.\)
\end{proof}

\begin{proof}[Proof of Theorem~\ref{T:shielding}]
For every $\Q\in\Qcal_\infty$, $\sup_{X \in \Ecal} \E_\Q[\log X] \geq \E_\Q[\log Y] = \infty$, and thus from~\eqref{eq:numeraire-duality}, 
\(
\inf_{\P\in\Peff}H(\Q\mid \P)=\infty.
\)
Therefore,
\[
\inf_{\Q\in\Qcal}
\inf_{\P\in\Peff}H(\Q \mid \P)
=
\min\left(
\inf_{\Q\in\Qcal_I}
\inf_{\P\in\Peff}H(\Q\mid \P),
\,
\inf_{\Q\in\Qcal_\infty}
\inf_{\P\in\Peff}H(\Q\mid \P)
\right)
=
I.
\]
Thus, we only have to prove the first equality in~\eqref{eq:shielded-strong-duality}, and indeed only \(
\sup_{X\in\Ecal}
\inf_{\Q\in\Qcal}\E_\Q[\log X]
\ge I,
\) needs to be argued. To this end, fix $\varepsilon>0$. By the definition of
$I$, there exists $X_\varepsilon\in\Ecal$ such that
\[
I - \varepsilon \leq \inf_{\Q\in\Qcal_I}\E_\Q[\log X_\varepsilon].
\]
For $\delta\in(0,1)$ define now
\[
X_{\varepsilon,\delta}
=
(1-\delta)X_\varepsilon+\delta Y \in \Ecal.
\]
If $\Q\in\Qcal_I$, then
\(
X_{\varepsilon,\delta}\ge (1-\delta)X_\varepsilon,
\)
and therefore
\[
\E_\Q[\log X_{\varepsilon,\delta}]
\ge 
\E_\Q[\log X_\varepsilon]+\log(1-\delta)
\ge
I-\varepsilon+\log(1-\delta).
\]
If $\Q\in\Qcal_\infty$, then
\(
X_{\varepsilon,\delta}\ge \delta Y,
\)
and hence, by \eqref{eq:shield-assumption},
\[
\E_\Q[\log X_{\varepsilon,\delta}]
\ge
\log\delta+\E_\Q[\log Y]
=
\infty.
\]
The last two displays together yield
\[
\inf_{\Q\in\Qcal}\E_\Q[\log X_{\varepsilon,\delta}]
\ge
I-\varepsilon+\log(1-\delta).
\]
Letting $\delta\downarrow0$ and $\varepsilon\downarrow0$ gives
\(
\sup_{X\in\Ecal}
\inf_{\Q\in\Qcal}\E_\Q[\log X]
\ge I,
\)
concluding the proof.
\end{proof}

\end{document}